\begin{document}
\newcommand{\f}{\displaystyle\frac}
\newcommand{\il}{\int\limits}
\newcommand{\s}{\sum\limits}
\newcommand{\ba}{\begin{array}}
\newcommand{\ea}{\end{array}}
\newcommand{\ld}{\left\{}
\newcommand{\rd}{\right\}}
\newcommand{\lz}{\left[}
\newcommand{\rz}{\right]}
\newcommand{\lx}{\left(}
\newcommand{\rx}{\right)}
\newcommand{\og}{\omega}
\newcommand{\ra}{\rightarrow}
\newcommand{\di}{\displaystyle}
\newcommand{\al}{\alpha}
\newcommand{\va}{\varepsilon}
\newcommand{\la}{\lambda}
\newcommand{\be}{\beta}
\newcommand{\bp}{\begin{proof}}
\newcommand{\ep}{\end{proof}}
\newcommand{\beq}{\begin{equation}}
\newcommand{\eeq}{\end{equation}}
\newcommand{\bt}{\begin{theorem}}
\newcommand{\et}{\end{theorem}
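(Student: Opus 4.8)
The plan is to prove the stated global estimate --- ultimate boundedness of all trajectories of the system (equivalently, the existence of the absorbing region claimed in the theorem), which also yields global asymptotic stability of the equilibrium in the degenerate case when the affine part below vanishes --- by an energy/Lyapunov argument. I take a radially unbounded, positive-definite $V(x)=\tfrac12\sum_i a_i x_i^2$ with weights $a_i>0$ chosen so that, upon differentiating $V$ along solutions, the quadratic terms generated by the (anti)symmetric coupling of the model cancel identically; this is the one place where the specific algebraic structure of the system is essential. What is left is a negative-definite dissipative quadratic plus lower-order (linear and constant) terms, and completing the square / Young's inequality collapses this into the closed differential inequality $\dot V \le -2\mu V + C$, with $\mu>0$ and $C\ge 0$ explicit in the parameters.

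The key steps, in order, are: (i) fix the weights $a_i$ and verify the cancellation of the non-dissipative quadratic terms; (ii) write the indefinite remainder as $-\,Q(x)+\ell(x)$ with $Q$ positive definite and $\ell$ affine, and absorb $\ell$ into $Q$ at the cost of the additive constant $C$, producing $\dot V \le -2\mu V + C$; (iii) apply Grönwall's inequality to get $V(x(t)) \le V(x(0))\,e^{-2\mu t} + \frac{C}{2\mu}\bigl(1-e^{-2\mu t}\bigr)$, whence $\limsup_{t\to\infty} V(x(t)) \le C/(2\mu)$ and, with $R:=C/(2\mu)$, $V(x(t))\le R$ for all $t\ge 0$ whenever $V(x(0))\le R$; (iv) translate the sublevel set $\{\,V\le R\,\}$ into the explicit ellipsoid/ball asserted in the statement, concluding that this set is positively invariant and globally attracting --- and, as an immediate corollary, that the system admits a compact global attractor contained in it.

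I expect steps (i)--(ii) to be the real obstacle: one must choose the $a_i$ so that \emph{every} quadratic term that is not already sign-definite disappears, and then squeeze the leftover affine part into the $-2\mu V + C$ template while keeping $\mu$ as large (hence $R$ as small) as the computation permits. If no single homogeneous quadratic $V$ works, I would instead center it, taking $V(x)=\tfrac12\sum_i a_i (x_i-x_i^\ast)^2$ for a suitable shift $x^\ast$, or peel off one coordinate --- bounding it first by a scalar comparison/Grönwall argument --- and feed that bound into the estimate for the remaining coordinates. Everything after the cancellation is then routine bookkeeping: uniqueness of the constants, continuity in the parameters, and sharpness of the resulting radius.
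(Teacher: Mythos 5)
The statement you are addressing is the boundedness result (Theorem 2: all solutions of system \eqref{2} are bounded, with $\limsup_{t\to\infty}x(t)\le 1$ and $\limsup_{t\to\infty}y(t)\le 1$). The paper's proof is a two-line comparison argument: on the positive quadrant every cross term is nonpositive, so $\dot x\le bx(1-x)$ and $\dot y\le y(1-y)$, and the logistic comparison lemma (Lemma \ref{lemma2.2}) immediately gives the componentwise bounds. Your Lyapunov/Gr\"onwall route is genuinely different, and while ultimate boundedness \emph{can} be extracted from an energy estimate on the invariant positive quadrant, your plan as written has concrete problems.

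First, the structural description does not fit this system. There is no antisymmetric coupling whose quadratic terms must be cancelled by tuning the weights $a_i$: with $V=\tfrac12(a_1x^2+a_2y^2)$ one gets $\dot V=a_1bx^2(1-x-cy)+a_2y^2\bigl(\tfrac{1}{1+kx}-y-ax-mxy\bigr)$, in which every cross term ($-a_1bcx^2y$, $-a_2axy^2$, $-a_2mx y^3$) is cubic or quartic and already nonpositive on the positive quadrant for \emph{any} positive weights --- so step (i) is vacuous, but only after you invoke positivity of solutions (Theorem 1), which your argument never does; off the positive quadrant these terms change sign and the estimate fails. Second, your decomposition ``negative-definite quadratic $-Q$ plus affine $\ell$'' is wrong here: the destabilizing part is the \emph{quadratic} $a_1bx^2+a_2y^2/(1+kx)$ and the dissipation is the \emph{cubic} $-a_1bx^3-a_2y^3$, so you cannot absorb the former into the latter as an affine perturbation of a quadratic form; you would instead need an inequality of the type $bx^2-bx^3\le-\mu x^2+C$, which does close the loop but is not what you wrote. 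Third, even once repaired, the output is an ellipsoidal absorbing set $\{V\le C/(2\mu)\}$ with a non-sharp radius, whereas the paper needs the sharp componentwise bounds $\limsup x\le1$, $\limsup y\le1$ downstream (they are used in the permanence proof of Theorem \ref{thm2.5} and in restricting $0<x,y<1$ for the equilibrium analysis). So the approach is salvageable but both strictly harder and strictly weaker than the paper's one-line comparison.
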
}

\catchline{}{}{}{}{} % Publisher's Area please ignore

\markboth{S. Chen et al.}{Dynamical Analysis of an Allelopathic Phytoplankton Model with Fear Effect}

\title{Dynamical Analysis of an Allelopathic Phytoplankton Model \\ with Fear Effect}

\author{Shangming Chen}

\address{School of Mathematics and Statistics, Fuzhou University, No.2 Xueyuan Road\\
Fuzhou, Fujian  350108, P. R. China\\
210320019@fzu.edu.cn}

\author{Fengde Chen \footnote{Author for correspondence}}
\address{School of Mathematics and Statistics, Fuzhou University, No.2 Xueyuan Road\\
Fuzhou, Fujian  350108, P. R. China\\
fdchen@fzu.edu.cn}

\author{Vaibhava Srivastava}
\address{Department of Mathematics, Iowa State University, \\
	Ames, IA 50011, USA. \\
	vaibhava@iastate.edu}

 \author{Rana D. Parshad }
\address{Department of Mathematics, Iowa State University, \\
	Ames, IA 50011, USA. \\
	rparshad@iastate.edu}

\maketitle

\begin{history}
\received{(to be inserted by publisher)}
\end{history}

\begin{abstract}
This paper is the first to propose an allelopathic phytoplankton competition ODE model influenced by a fear effect based on natural biological phenomena. It is shown that the interplay of this fear effect and the allelopathic term cause rich dynamics in the proposed competition model, such as global stability, transcritical bifurcation, pitchfork bifurcation, and saddle-node bifurcation. We also consider the spatially explicit version of the model, and prove analagous results. Numerical simulations verify the feasibility of the theoretical analysis. The results demonstrate that the primary cause of the extinction of non-toxic species is the fear of toxic species compared to toxins. Allelopathy  only affects the density of non-toxic species. The discussion provides guidance for the conservation of species and the maintenance of bio-diversity.
\end{abstract}

\keywords{Allelopathy; Competition; Global Stability; Transcritical Bifurcation; Pitchfork Bifurcation; Saddle-node Bifurcation; Reaction-diffusion system.}

%\begin{multicols}{2}
\section{Introduction}
Phytoplankton are at the base of aquatic food webs and of global importance for ecosystem functioning and services \cite{Winder12}. Moreover, phytoplankton also contribute significantly to economic growth, which is advantageous for the biotechnology, pharmaceutical, and nutraceutical sectors \cite{Pradhan22}. Hence, the investigation of phytoplankton species density holds significant academic significance. A unique phenomenon among phytoplankton species is when secondary metabolites generated by one phytoplankton have an inhibiting influence on the development or physiological operation of another phytoplankton \cite{Legrand03}. This behavior is often called allelopathy when phytoplankton engage in competitive interactions with peers by releasing toxic compounds. Numerous studies have shown that allelopathy plays a crucial role in the competitive dynamics of phytoplankton. For example, Mulderij {\it et al.}\cite{Mulderij06} investigated the allelopathic potential of exudates from the aquatic macrophyte {\it Stratiotes aloides} on the growth of phytoplankton. The study results show that {\it Stratiotes aloides} exerts a chemosensitizing effect on phytoplankton, inhibiting the growth of other algae by releasing toxins.

Maynard-Smith \cite{Maynard-Smith74} added the allelopathic term into the classical two-species Lotka-Volterra competition model in order to account for the harmful impacts exerted by one species on the other:
\begin{equation}
\left\{\begin{array}{l}
\displaystyle\frac{\mathrm{d} N_1(t)}{\mathrm{d} t} =N_1(t)\left [ \alpha _1-\beta_1N_1(t)-v_1N_2(t)-\gamma_1N_1(t)N_2(t) \right ], \vspace{2ex}\\
\displaystyle\frac{\mathrm{d} N_2(t)}{\mathrm{d} t} =N_2(t)\left [ \alpha _2-\beta_2N_2(t)-v_2N_1(t)-\gamma_2N_1(t)N_2(t) \right ] ,
\end{array}\right.
\end{equation}
where $N_i(t)$ ($i=1,2$, the same below) is the density of two competing phytoplankton species, $\alpha_i$ represents the rate of daily cell proliferation, $\beta_i$ denotes the intraspecific competition rate of the i-th species, $v_i$ stands for the rate of interspecific competition, $\gamma_i$ represents the toxicity release rate from the other species to species $i$-th. The initial conditions $N_i(0)>0$.

Based on the work of Maynard-Smith, many scholars have considered the situation where only one species releases toxins. Chen {\it et al.} \cite{Chen13} proposed a discrete system for toxin release from single species:
\begin{equation}
\left\{\begin{array}{l}
x_1(n+1) =x_1(n)\mathrm{exp}\left [r_1(n)-a_{11}(n)x_1(n)-a_{12}(n)x_2(n)-b_1(n)x_1(n)x_2(n) \right], \vspace{2ex}\\
x_2(n+1) =x_2(n)\mathrm{exp}\left [r_2(n)-a_{21}(n)x_1(n)-a_{22}(n)x_2(n)\right].
\end{array}\right.
\label{a1}
\end{equation}
The authors proved the extinction and global stability conditions for system \eqref{a1}. It was found that the extinction of system \eqref{a1} is not affected at low rates of toxin release, meaning that the toxic species cannot extinguish non-toxic species. Further studies on single toxic species were conducted in \cite{Chen16, Chen23}.

However, in reality, a non-toxic species can go extinct even if it is only affected by lower concentrations of toxins. In other words, what factors other than degradation by actual toxins might affect the density of competing phytoplankton species, without additional external factors interfering? Since the effect of allelopathy is based on the classical Lotka-Volterra competition model, we will consider competitive fear.

In 2016, Wang {\it et al.} \cite{Wang16} considered the fear effect for the first time based on the classical two-species Lotka-Volterra predator-prey model:
\begin{equation}
\left\{\begin{array}{l}
\displaystyle\frac{d x}{d t}=r x f(k, y)-d x-a x^{2}-g(x) y, \vspace{2ex}\\
\displaystyle\frac{d y}{d t}=-m y+c g(x) y,
\end{array}\right.
\end{equation}
where $a$ represents the mortality rate due to intraspecific competition of the prey, $g(x)$ is the functional predation rate of the predator, and $f(k, y)=\displaystyle\frac{1}{1+ky}$ represents the anti-predation response of the prey due to the fear of the predator, i.e., the fear effect function. The researchers found that under conditions of Hopf bifurcation, an increase in fear level may shift the direction of Hopf bifurcation from supercritical to subcritical when the birth rate of prey increases accordingly. Numerical simulations also suggest that animals' anti-predator defenses increase as the predator attack rate increases. Further research on the fear effect of the predator-prey model can be seen in \cite{Lai20, Liu22}.

By studying the fear effect of the predator-prey model, scholars generally agree that the non-consumptive effect of fear on the density of bait species is more significant than depredation on them. In connection with natural biological phenomena, prey perceives the risk of predation and respond with a range of anti-predatory responses, such as changes in habitat selection and foraging behavior \cite{Polis89, Peckarsky08}. These changes in various forms, may ultimately affect the overall reproductive rate of the prey population.

The effect of fear on predator-prey systems has been extensively studied, but fear has been considered far less in competition systems. However, there is strong evidence that fear exists in purely competitive systems without predation effects or where predation effects are negligible \cite{Chesson08, Wiens14}. The Barred Owl (\emph{Strix varia}) is a species of Owl native to eastern North America. During the last century, they have expanded their range westward and have been recognized as an invasion of the western North American ecosystem—their range overlaps with the Spotted Owl (\emph{Strix occidentalis}). The Spotted Owl is native to northwestern and western North America, which has led to intense competition between two species \cite{Long19}. The Barred Owl has a strong negative impact on the Spotted Owl, and field observations have reported that barred owls frequently attack spotted owls \cite{Van Lanen11}. Evidence also shows that barred owls actively and unilaterally drive spotted owls out of shared habitat \cite{Wiens14}.

Such evidence motivates us to consider the fear effect in a purely competitive two-species model, in which one competitor causes fear to the other. Thus, Srivastava {\it et al.}\cite{Srivastava23} considered the classical two-group Lotka-Volterra competition model with only one competitor causing fear to the other competitor:
\begin{equation}
\left\{\begin{array}{l}
\displaystyle\frac{d u}{d t}=a_{1} u-b_{1} u^{2}-c_{1} u v, \vspace{2ex}\\
\displaystyle\frac{d v}{d t}=\displaystyle\frac{a_{2} v}{1+ku} - b_{2} v^{2}-c_{2} u v.
\end{array}\right.
\label{a2}
\end{equation}
Their study found that the fear effect leads to exciting dynamics such as saddle-node bifurcation and transcritical bifurcation in system \eqref{a2}. This is not found in the classical Lotka-Volterra competition model. In extension to this work, Chen {\it et al.}\cite{Chen2023} also proved several interesting dynamics for a two-species competitive ODE and PDE systems where an Allee and the fear effect are both present. 

Inspired by the above works, we aim to investigate how the fear parameter affects competitive allelopathic planktonic systems by introducing a fear effect term, where the non-toxic species is ``fearful" of the toxic population. Thus we propose the following model:
\begin{equation}
\left\{\begin{array}{l}
\displaystyle\frac{\mathrm{d} x_1}{\mathrm{d} \tau} =x_1\left( r_1-\alpha_1x_1-\beta_1x_2 \right), \vspace{2ex}\\
\displaystyle\frac{\mathrm{d} x_2}{\mathrm{d} \tau} =x_2\left( \displaystyle\frac{r_2}{1+\eta x_1} -\alpha_2x_2-\beta_2x_1-\xi x_1x_2 \right) ,
\end{array}\right.
\label{a3}
\end{equation}
where $\eta$ is the fear effect parameter and $\xi$ represents the toxic release rate.

In the current manuscript we perform a complete dynamical analysis of system \eqref{a3} with the following innovations:

\begin{itemlist}
\item System \eqref{a3} has at most two positive equilibria, while the global stability of positive equilibria is influenced by the fear effect parameter $\eta$ and the interspecific competition rate $\beta_1$.\vspace{0.1ex}\\
\item Changing the values of the fear effect $\eta$ and the interspecific competition rate $\beta_1$ will cause system \eqref{a3} to experience a transcritical bifurcation at the boundary. At the same time, the toxic release rate $\xi$ will transform the transcritical bifurcation into a pitchfork bifurcation.\vspace{0.1ex}\\
\item The toxic release rate $\xi$ causes system \eqref{a3} to undergo a saddle-node bifurcation in the quadrant. \vspace{0.1ex}\\
\item The toxic release rate $\xi$ only affects the non-toxic species density, while the fear effect $\eta$ can lead to the extinction of non-toxic species.
\item In the spatially explicit system or the PDE case, we analogously see that attraction to boundary equilibrium or an interior equilibrium are both possible depending on parametric restrictions and initial conditions, see theorems \ref{thm_CE1} $\&$ \ref{thm_CE2}. Furthermore strong competition type dynamics are also possible, again depending on parametric restrictions and initial conditions, see theorem \ref{thm_SComp}.

\end{itemlist}

The rest of this paper is organized as follows: The conditions for the system's permanence are laid forth in Section 2, which also demonstrates the solution's positivity and boundness. We examine the existence and types of all equilibria in Section 3 and Section 4. Also, the global stability of positive equilibria is studied in Section 5. In Section 6, we analyze the bifurcation of the system around the equilibria. Numerical simulations are performed in Section 7 to verify the theoretical analysis's feasibility, showing how fear effect and toxin release rate can affect species density. We end this paper with a brief conclusion.

\section{Preliminaries}
In order to reduce the parameters of system \eqref{a3}, the following dimensionless quantities are applied to the non-dimensionalize model system \eqref{a3}
$$
t=r_2\tau,\quad\frac{x_1}{k_1}=x,\quad\frac{x_2}{k_2}=y,\quad \eta k_1=k,\quad \f{\xi k_1 k_2}{r_2}=m,\quad \f{\beta_2 k_1}{r_2}=a,\quad \f{r_1}{r_2}=b,\quad \f{\beta_1 k_2}{r_1}=c,
$$
then system \eqref{a3} becomes the following system:
\begin{equation}
\left\{\begin{array}{l}
\displaystyle\frac{\mathrm{d} x}{\mathrm{d} t} =bx\left( 1-x-cy \right)=xf(x,y)\equiv F(x,y), \vspace{2ex}\\
\displaystyle\frac{\mathrm{d} y}{\mathrm{d} t} =y\left( \f{1}{1+kx}-y-ax-mxy \right)=yg(x,y)\equiv G(x,y),
\end{array}\right.
\label{2}
\end{equation}
all parameters in system \eqref{2} are positive. Based on biological considerations, the initial condition of system \eqref{2} satisfies
\begin{equation}
x(0)>0, y(0)>0.
\label{3}
\end{equation}
\subsection{Positivity and boundedness of the solutions}
\bt
All solutions of system \eqref{2} are positive.
\et
\bp
Since
$$
x(t)=x(0) \mathrm{exp} \left [ \int_{0}^{t} f(x(s),y(s))\mathrm{d}s\ \right ] >0,
$$
and
$$
y(t)=y(0) \mathrm{exp} \left [ \int_{0}^{t} g(x(s),y(s))\mathrm{d}s\ \right ] >0.
$$
So all solutions of system \eqref{2} with initial condition \eqref{3} are positive.

This completes the proof.
\ep
\begin{lemma}\label{lemma2.2}\cite{Chen05}
If $a,b>0$ and $x(0)>0$,
\begin{itemize}
\item $\limsup\limits_{t\rightarrow +\infty} x(t) \le \displaystyle\frac{a}{b}$ when $x^{'}(t)\le x(t)(a-bx(t))$,
\item $\liminf\limits_{t\rightarrow +\infty} x(t) \ge \displaystyle\frac{a}{b}$ when $x^{'}(t)\ge x(t)(a-bx(t))$.
\end{itemize}
\end{lemma}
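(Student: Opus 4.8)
Both assertions are instances of the scalar comparison principle combined with the fact that the logistic equation possesses a globally attracting positive equilibrium. I would first dispatch the upper estimate. Let $\bar x(\cdot)$ denote the solution of the logistic initial value problem $\bar x'=\bar x(a-b\bar x)$, $\bar x(0)=x(0)>0$. Because the right-hand side $h(\zeta)=\zeta(a-b\zeta)$ is locally Lipschitz, this solution is unique, and since $h(0)=h(a/b)=0$ the interval $\big(0,\max\{x(0),a/b\}\big]$ is forward invariant; hence $\bar x$ is defined and bounded for all $t\ge 0$. From the hypothesis $x'(t)\le x(t)\big(a-bx(t)\big)=h(x(t))$ together with $x(0)=\bar x(0)$, the differential-inequality form of the comparison theorem gives $x(t)\le\bar x(t)$ on the maximal forward interval of existence of $x$; this bound (and the elementary remark that $x'\le h(x)$ also rules out escape to $-\infty$ in finite time, which in the intended application is in any case excluded by positivity of the density) shows $x$ exists for all $t\ge 0$.

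Next I would make $\bar x$ explicit via the Bernoulli substitution $u=1/\bar x$, which linearises the equation to $u'=-au+b$ and yields
$$\bar x(t)=\frac{a\,x(0)}{b\,x(0)+\big(a-b\,x(0)\big)e^{-at}}\longrightarrow\frac{a}{b}\qquad(t\to\infty),$$
the limit using $a>0$. Therefore $\limsup_{t\to\infty}x(t)\le\lim_{t\to\infty}\bar x(t)=a/b$, which is the first claim. The second claim is symmetric: let $\underline x$ solve the same logistic problem with $\underline x(0)=x(0)>0$; from $x'(t)\ge x(t)(a-bx(t))=h(x(t))$ and $x(0)=\underline x(0)>0$ the comparison theorem gives $x(t)\ge\underline x(t)>0$ for all $t\ge 0$ — here positivity of $x(0)$ is exactly what keeps the comparison solution bounded away from $0$ — and passing to the limit in $x(t)\ge\underline x(t)\to a/b$ gives $\liminf_{t\to\infty}x(t)\ge a/b$.

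The only point requiring care is the bookkeeping on global existence and the exact hypotheses for the comparison theorem: it must be invoked in the version valid for a locally Lipschitz right-hand side, together with a check that the relevant comparison solution does not blow up in finite time, rather than the version for globally Lipschitz fields. With that in hand the remainder is routine, and I would cite a standard source (e.g. Hale, \emph{Ordinary Differential Equations}, or Lakshmikantham--Leela) for the comparison theorem and display the explicit logistic solution as above. Since the statement is quoted from \cite{Chen05}, in the manuscript it is enough to refer to that reference, but the self-contained argument just outlined is the proof I would supply.
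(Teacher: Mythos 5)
Your argument is correct: the paper itself gives no proof of this lemma, merely citing Chen [2005], and the comparison-with-the-explicit-logistic-solution argument you outline is the standard and essentially canonical way to establish it. The one caveat you already flag yourself — that in the super-solution case $x'\ge x(a-bx)$ one must separately know $x$ exists for all $t\ge 0$ (here guaranteed by the boundedness of the full system) — is the only bookkeeping point worth recording.
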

\bt
The solutions of system \eqref{2} are bounded.
\et
\bp
According to the first equation of system \eqref{2},
$$
\f{\mathrm{d} x}{\mathrm{d} t} =bx \left( 1-x-cy \right) \le x(b -b x),
$$
by applying Lemma \ref{lemma2.2} to the above inequality, we have
\begin{equation}
\limsup\limits_{t\rightarrow +\infty} x(t) \le \frac{b}{b} =1.
\label{2.3}
\end{equation}

Similarly, according to the second equation of system \eqref{2}, we have
$$
\displaystyle\frac{\mathrm{d} y}{\mathrm{d} t} =y\left( \f{1}{1+kx}-y-ax-mxy \right)\le y(1-y),
$$
so
\begin{equation}
\limsup\limits_{t\rightarrow +\infty} y(t) \le 1.
\label{2.4}
\end{equation}

This completes the proof.
\ep
\subsection{Permanence of the system}
\begin{definition}
System \eqref{2} is considered to be permanent if there are two positive constants, denoted as $m$ and $M$, which are not dependent on the solutions of system \eqref{2}, such that each positive solution $(x(t,x_0,y_0),y(t,x_0,y_0))$ of system \eqref{2} with the initial condition $(x_0,y_0)\in Int(R_+^2)$ satisfies
$$
m\le \liminf\limits_{t\rightarrow +\infty} x(t,x_0,y_0)\le \limsup\limits_{t\rightarrow +\infty} x(t,x_0,y_0)\le M,
$$
$$
m\le \liminf\limits_{t\rightarrow +\infty} y(t,x_0,y_0)\le \limsup\limits_{t\rightarrow +\infty} y(t,x_0,y_0)\le M.
$$
\end{definition}

\bt\label{thm2.5}
System \eqref{2} is permanent if $0 < k < k^*$ and $0 < c < 1$.
\et
\bp
From \eqref{2.3} and \eqref{2.4}, for $\varepsilon >0$ small enough without loss of generality, there is $T>0$ such that, for $t>T$, we have
$$
x(t)\le 1+\varepsilon, \quad y(t)\le 1+\varepsilon.
$$
According to the first equation of system \eqref{2},
$$
\displaystyle\frac{\mathrm{d} x}{\mathrm{d} t} =x\left [ (b-b cy)-b x \right ] \ge x\left [ (b-b c(1+\varepsilon))-b x \right ],
$$
by applying Lemma \ref{lemma2.2} to above differential inequality, we have
$$
\liminf\limits_{t\rightarrow +\infty} x(t)\ge 1-c(1+\varepsilon).
$$
Setting $\varepsilon \rightarrow0$ in above inequality leads to
\begin{equation}
\liminf\limits_{t\rightarrow +\infty} x(t)\ge 1-c.
\label{2.5}
\end{equation}
Similarly, according to the second equation of system \eqref{2},
$$
\displaystyle\frac{\mathrm{d} y}{\mathrm{d} t} =y\left( \f{1}{1+kx}-y-ax-mxy \right) \ge y \left [ (\f{1}{1+k(1+\varepsilon)}-a(1+\varepsilon))-(1+m(1+\varepsilon))y\right ],
$$
by applying Lemma \ref{lemma2.2} to above differential inequality, we have
$$
\liminf\limits_{t\rightarrow +\infty} y(t)\ge \f{\f{1}{1+k(1+\varepsilon)}-a(1+\varepsilon)}{1+m(1+\varepsilon)}.
$$
Setting $\varepsilon \rightarrow0$ in above inequality leads to
\begin{equation}
\liminf\limits_{t\rightarrow +\infty} y(t)\ge \f{\f{1}{1+k}-a}{1+m}.
\label{2.6}
\end{equation}
In summary, we select $M=1$, $m=\mathrm{min}\left \{ 1-c, \f{\f{1}{1+k}-a}{1+m} \right \} $, which obviously independent of the solution of system \eqref{2}. Let $\f{1}{a}-1\triangleq k^*$. Then, \eqref{2.3}, \eqref{2.4}, \eqref{2.5} and \eqref{2.6} show that system \eqref{2} is permanent under the assumption of Theorem \ref{thm2.5}.

This completes the proof.
\ep

\section{Boundary Equilibria and Their Types}

It is obvious that system\eqref{2} includes two boundary equilibria $E_1(1,0)$, $E_2(0,1)$, as well as a constant equilibrium point $E_0(0,0)$. In the following, we will examine the types of them. The Jacobian matrix of system \eqref{2} is given by
\beq
J(E)=\begin{bmatrix}
-b(2x+cy-1) & -bcx\\
-y\left [ \displaystyle\frac{k}{(1+kx)^2}+a+my \right ] & \displaystyle\frac{1}{1+kx}-(2my+a)x-2y
\end{bmatrix}\triangleq\begin{bmatrix}
 B_1 & B_2\\
 B_3 &B_4
\end{bmatrix}.
\label{8}
\eeq
From this, we can obtain
\beq
J(E_0)=\begin{bmatrix}
b & 0\\
0 & 1
\end{bmatrix},
\eeq
\beq
J(E_1)=\begin{bmatrix}
-b & -bc\\
0 & \f{1}{1+k}-a
\end{bmatrix},
\eeq
\beq
J(E_2)=\begin{bmatrix}
b(-c+1) & 0\\
-a-k-m & -1
\end{bmatrix}.
\eeq
Then we get the following theorem.
\bt\label{Thm4}
The types of boundary equilibria are illustrated in the following:
\begin{enumerate}
\item $E_0$ is always a source.
\item
\begin{enumerate}
\item $E_1$ is a hyperbolic stable node when $k>k^*$.
\item When $k=k^*$,
\begin{enumerate}
\item $E_1$ is an attracting saddle-node, and the parabolic sector is on the upper half-plane if $m>m^*$ (Fig. 1(a)).
\item $E_1$ is an attracting saddle-node, and the parabolic sector is on the lower half-plane if $0<m<m^*$ (Fig. 1(b)).
\item $E_1$ is a nonhyperbolic saddle if $m=m^*$ (Fig. 1(c)).
\end{enumerate}
\item $E_1$ is a hyperbolic saddle when $0<k<k^*$.
\end{enumerate}
\item
\begin{enumerate}
\item $E_2$ is a hyperbolic stable node when $c>1$.
\item When $c=1$,
\begin{enumerate}
\item $E_2$ is an attracting saddle-node, and the parabolic sector is on the right half-plane if $0<m<m^{**}$ (Fig. 2(a)).
\item $E_2$ is an attracting saddle-node, and the parabolic sector is on the left half-plane if $m>m^{**}$ (Fig. 2(b)).
\item $E_2$ is a degenerate stable node if $m=m^{**}$ (Fig. 2(c)).
\end{enumerate}
\item $E_2$ is a hyperbolic saddle if $0<c<1$.
\end{enumerate}
\end{enumerate}
\et
\bp
Due to $\la^{E_0}_1=b>0$, $\la^{E_0}_2=1>0$, so $E_0$ is always a source.

For $E_1$, $\la^{E_1}_1=-b<0$. When $\la^{E_1}_2<0$, i.e., $k>k^*$, $E_1$ is a hyperbolic stable node. When $\la^{E_1}_2>0$, i.e., $0<k<k^*$, $E_1$ is a hyperbolic saddle. When $\la^{E_1}_2=0$, i.e., $k=k^*$, $E_1$ is a degenerate equilibrium point. We then have the following debate.
\begin{figure}[h]
\centering
\subfigcapskip=-25pt
\subfigure[$m>m^*$]
{\scalebox{0.45}[0.45]{
\includegraphics{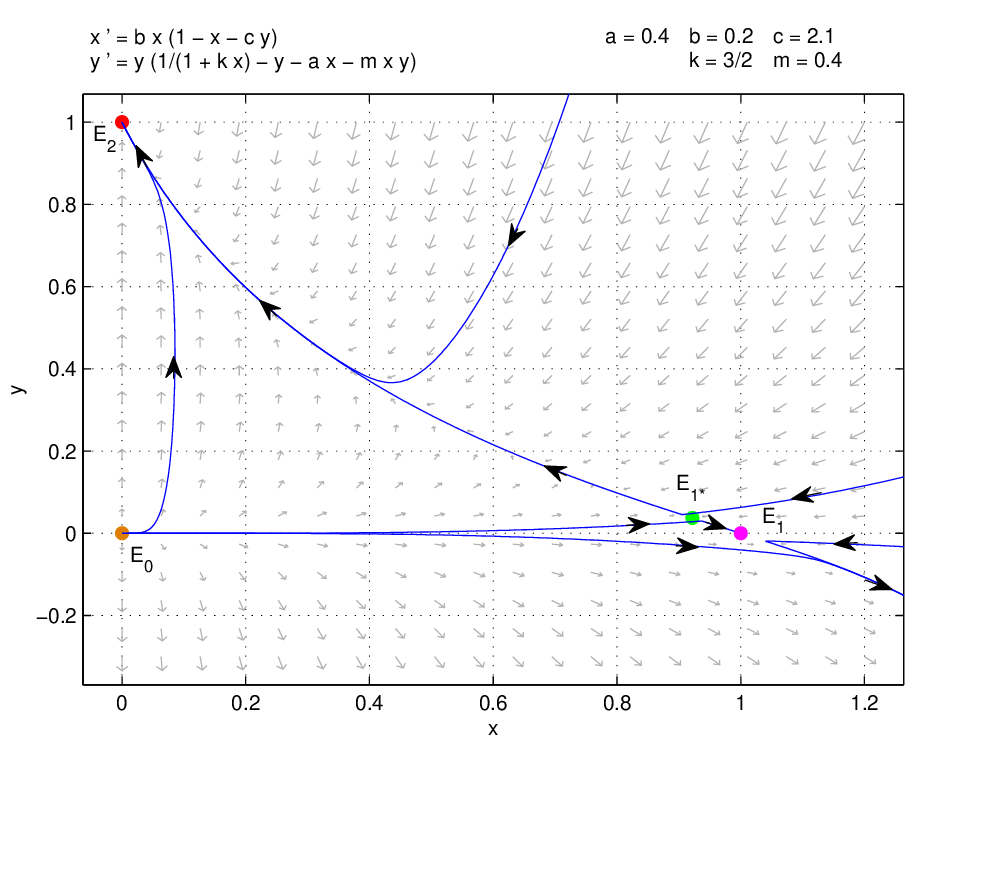}}}
\subfigure[ $0<m<m^*$]
{\scalebox{0.45}[0.45]{
    \includegraphics{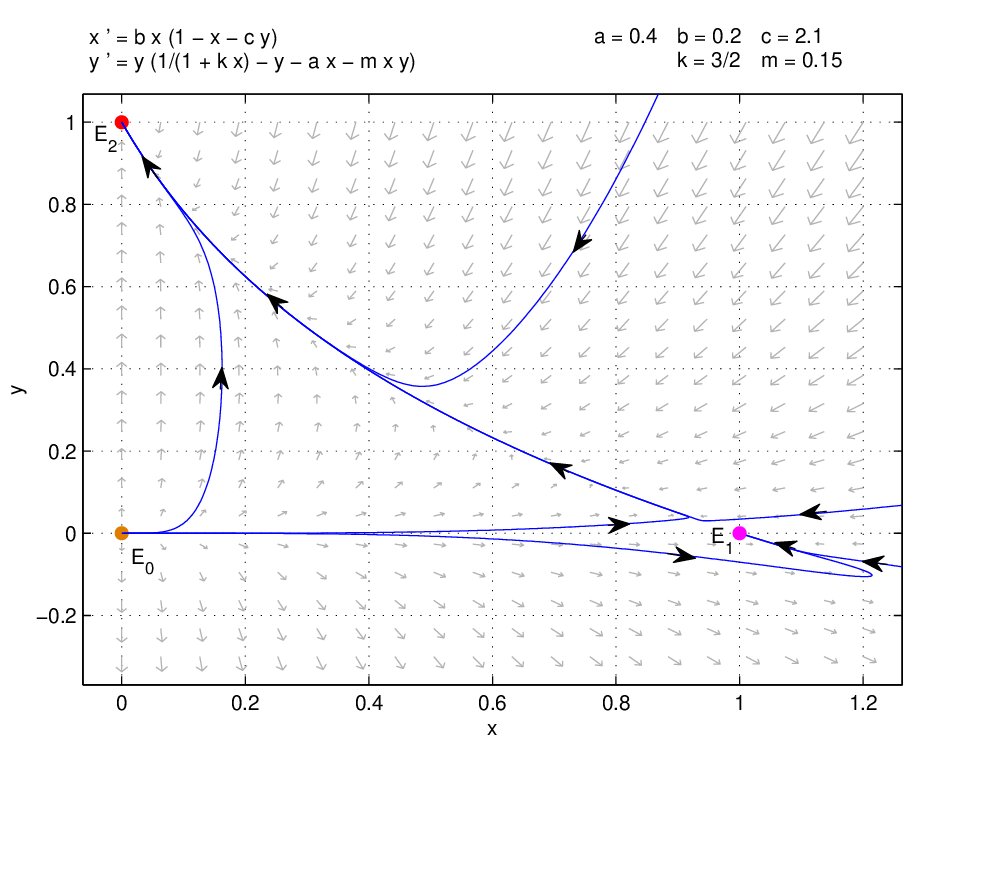}}}
    \subfigure[ $m=m^*$]
{\scalebox{0.45}[0.45]{
    \includegraphics{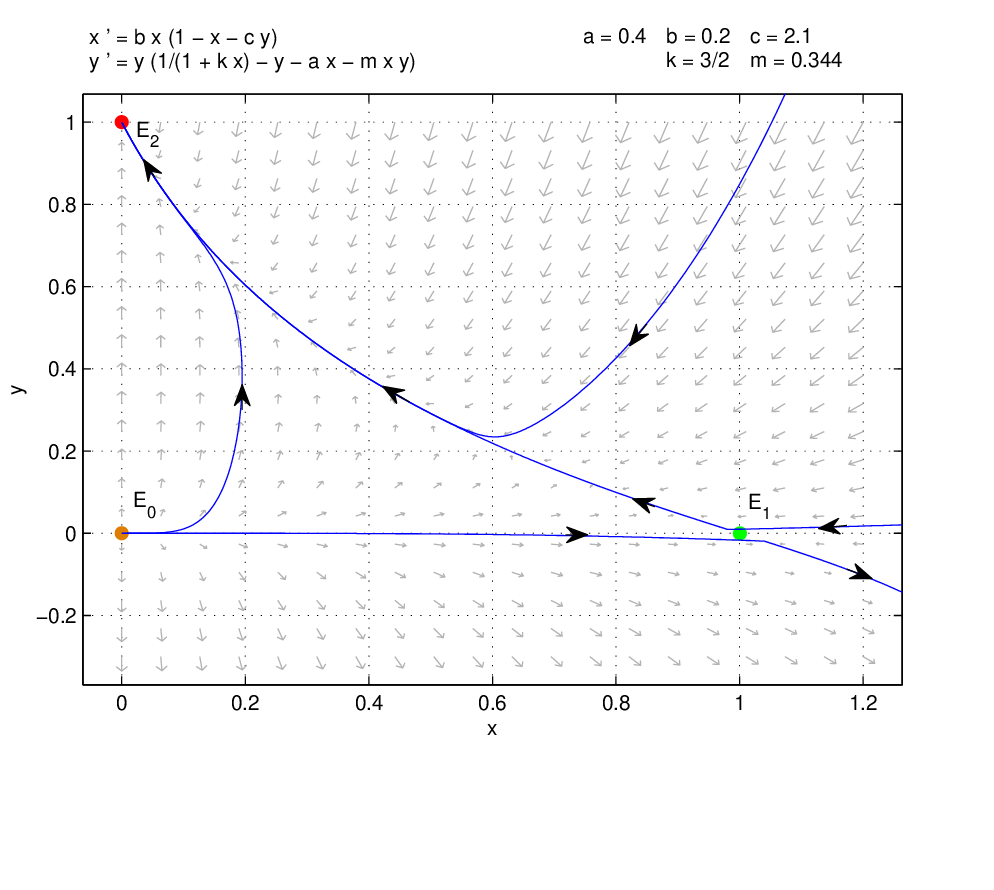}}}
\caption{Red, green, pink, and orange points indicate stable node, saddle, saddle-node, and unstable node (source), respectively. The value of the toxin release rate $m$ affects the solution orbit near the boundary equilibrium point $E_1$.}
\end{figure}

The equilibrium point $E_1$ is translated to the origin by applying the transformation $(X, Y) = (x-1, y)$. We perform a Taylor expansion around the origin, then system \eqref{2} becomes
$$
\left\{\begin{array}{l}
\displaystyle\frac{\mathrm{d} X}{\mathrm{d} t} =-b X-b c Y -b c XY -bX^{2}, \vspace{2ex}\\
\displaystyle\frac{\mathrm{d} Y}{\mathrm{d} t} =-\left(1+m \right) Y^{2}+a \left(-2+a \right) XY +a \left(-1+a \right)^{2} X^{2}Y -mXY^{2}  +P_1(X,Y),
\end{array}\right.
$$
where $P_i(X, Y)$ are power series in $(X, Y)$ with terms $X^IY^J$ satisfying $I+J \ge 4$ (the same below).

In the next step, we make the following transformations to the above system
$$
\begin{bmatrix}
X \\Y

\end{bmatrix}=\begin{bmatrix}
-bc & -b\\
b &0
\end{bmatrix}\begin{bmatrix}
X_1 \\Y_1

\end{bmatrix},
$$
and letting $\tau=-b t$, for which we will retain $t$ to denote $\tau$ for notational simplicity, we get
\begin{equation}
\left\{\begin{array}{l}
\displaystyle\frac{\mathrm{d} X_1}{\mathrm{d} t} =a_{20}X_1^2+a_{11}X_1Y_1+a_{30}X_1^3+a_{21}X_1^2Y_1+a_{12}X_1Y_1^2, \vspace{2ex}\\
\displaystyle\frac{\mathrm{d} Y_1}{\mathrm{d} t} =Y_1+b_{20}X_1^2+b_{11}X_1Y_1+b_{02}Y_1^2+b_{30}X_1^3+b_{21}X_1^2Y_1+b_{12}X_1Y_1^2+P_2(X_1,Y_1),
\end{array}\right.
\label{12}
\end{equation}
where
\begin{flalign}
\begin{split}
& a_{20}=\left(a^{2} c -2 a c +m +1\right), \quad a_{11}=a(-2+a), \quad a_{30}=- b c \left(a^{3} c -2 a^{2} c +a c +m \right), \\
&a_{21}=- b \left(2 a^{3} c -4 a^{2} c +2 a c +m \right), \quad a_{12}=- a \left(-1+a \right)^{2} b, \quad b_{20}=-\left(a^{2} c -2 a c +m +1\right) c,\\
&b_{11}=c \left(a^{2}-2 a +b \right), \quad b_{02}=-b, \quad b_{30}=b c^{2} \left(a^{3} c -2 a^{2} c +a c +m \right), \quad b_{12}=a \left(-1+a \right)^{2} b c,\\
&b_{21}=b \left(2 a^{3} c -4 a^{2} c +2 a c +m \right) c.\\
\nonumber
\end{split}&
\end{flalign}
Therefore, according to Theorem 7.1 in Chapter 2 of \cite{Zhang92}, if $a_{02}>0$, i.e., $m>-1+\left(-a^{2}+2 a \right) c\triangleq m^*$, $E_1$ is an attracting saddle-node, and the parabolic sector is on the upper half-plane (Fig. 1(a)). If $a_{02}<0$, i.e., $0<m<m^*$, $E_1$ is an attracting saddle-node, and the parabolic sector is on the lower half-plane (Fig. 1(b)). If $a_{02}=0$, i.e., $m=m^*$, system \eqref{12} becomes
\begin{equation}
\left\{\begin{array}{l}
\displaystyle\frac{\mathrm{d} X_1}{\mathrm{d} t} =a_{11}X_1Y_1+a_{30}X_1^3+a_{21}X_1^2Y_1+a_{12}X_1Y_1^2, \vspace{2ex}\\
\displaystyle\frac{\mathrm{d} Y_1}{\mathrm{d} t} =Y_1+b_{11}X_1Y_1+b_{02}Y_1^2+b_{30}X_1^3+b_{21}X_1^2Y_1+b_{12}X_1Y_1^2+P_2(X_1,Y_1).
\end{array}\right.
\label{3.6}
\end{equation}
By the existence theorem of the implicit function, it follows that $Y_1 = \phi (X_1)$ can be solved from the second equation of system \eqref{3.6} in a sufficiently small domain at the origin $(0,0)$ and satisfies $\phi(0)=\phi ^{'}(0)=0$.
Substituting 
$$
Y_1 = \phi (X_1)=-b_{30}X_1^3+\cdots\cdots
$$
into the first equation of system \eqref{3.6}, we get
$$
\displaystyle\frac{\mathrm{d} X_1}{\mathrm{d} t}=a_{30}X_1^3+\cdots \cdots
$$
where
$$
a_{30}=- b c \left(a^{3} c -3 a^{2} c +3 a c -1\right).
$$
From $m^*=-1+\left(-a^{2}+2 a \right) c>0$, we get 
$$
- b c \left(a^{3} c -3 a^{2} c +3 a c -1\right)<-abc^2(a-1)^2<0.
$$

According to Theorem 7.1 again, $E_1$ is a nonhyperbolic saddle since $a_{03}<0$ (Fig. 1(c)).

\begin{figure}[h]
\centering
\subfigcapskip=-25pt
\subfigure[$0<m<m^{**}$]
{\scalebox{0.45}[0.45]{
\includegraphics{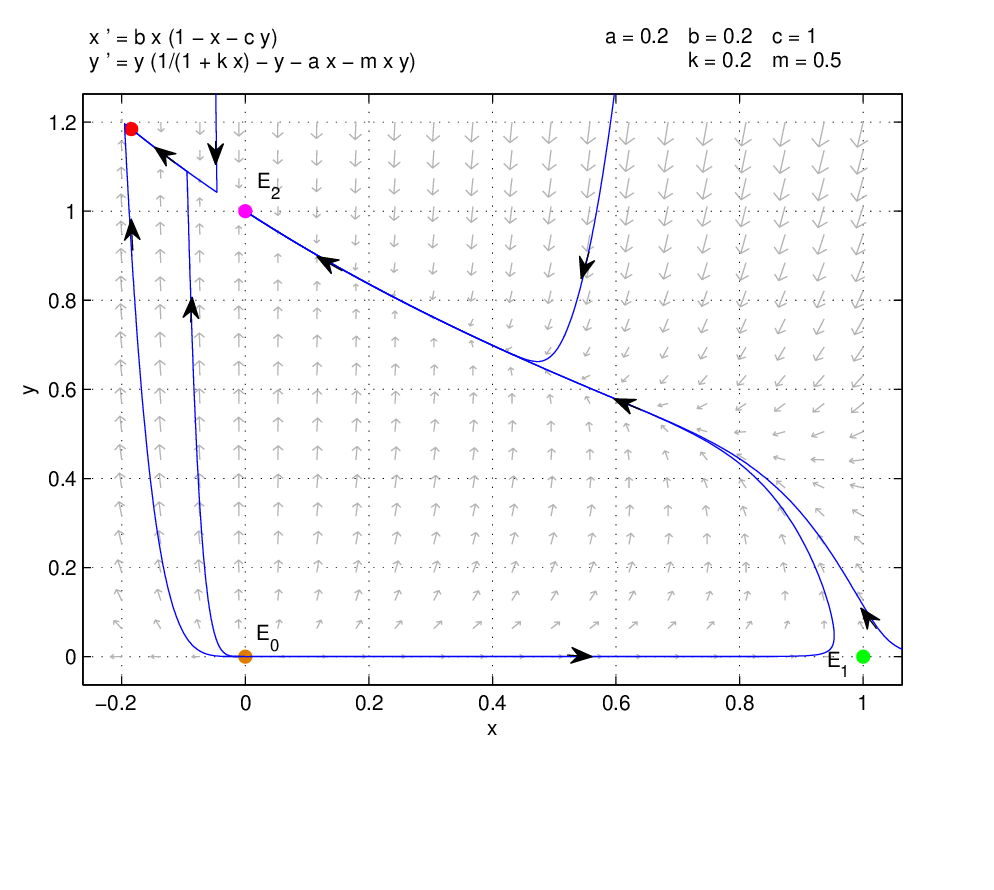}}}
\subfigure[ $m>m^{**}$]
{\scalebox{0.45}[0.45]{
    \includegraphics{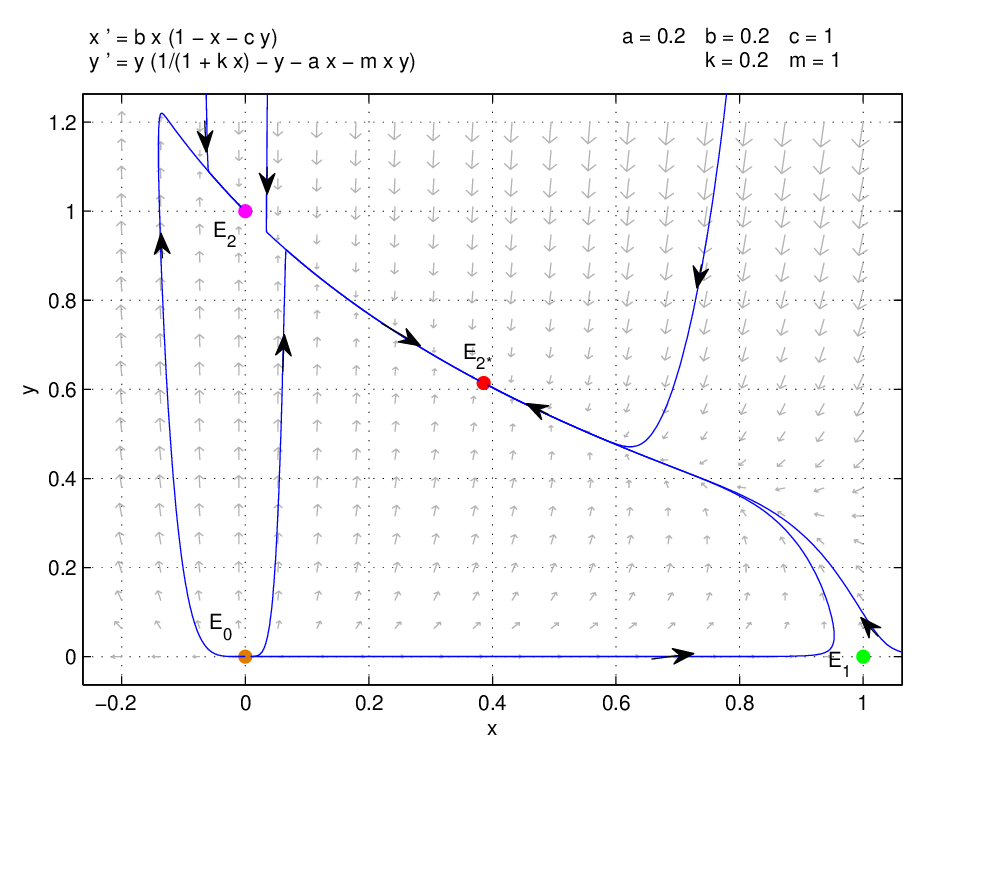}}}
    \subfigure[ $m=m^{**}$]
{\scalebox{0.45}[0.45]{
    \includegraphics{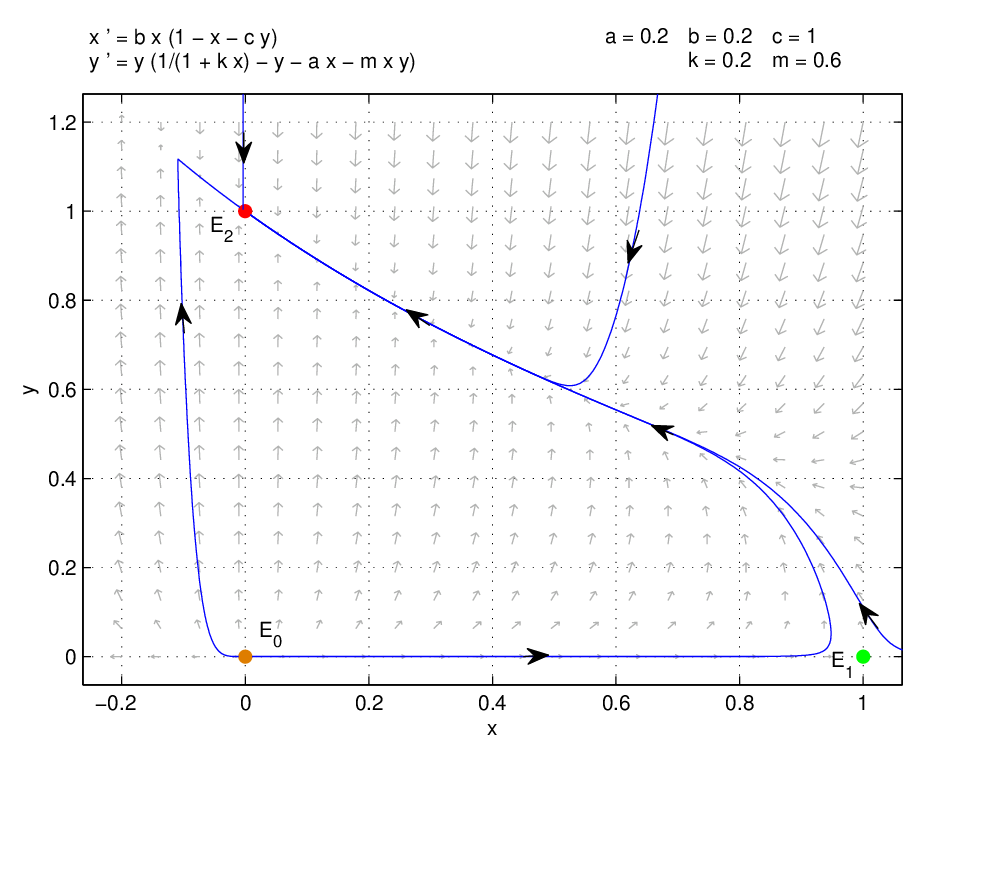}}}
\caption{Red, green, pink, and orange points indicate stable node, saddle, saddle-node, and unstable node (source), respectively. The value of the toxin release rate $m$ affects the solution orbit near the boundary equilibrium point $E_2$.}
\end{figure}

For $E_2$, $\la^{E_2}_2=-1<0$. When $\la^{E_2}_1<0$, i.e., $c>1$, $E_2$ is a hyperbolic stable node. When $\la^{E_2}_1>0$, i.e., $0<c<1$, $E_2$ is a hyperbolic saddle. When $\la^{E_2}_1=0$, i.e., $c=1$, $E_2$ is a degenerate equilibrium point. Then we conduct the following discussion.

We move equilibrium $E_2$ to the origin by transforming $(X_2, Y_2)=(x, y-1)$ and make Taylor's expansion around the origin, then system \eqref{2} becomes
$$
\left\{\begin{array}{l}
\displaystyle\frac{\mathrm{d} X_2}{\mathrm{d} t} =-bX_2^{2} -bX_2 Y_2 , \vspace{2ex}\\
\displaystyle\frac{\mathrm{d} Y_2}{\mathrm{d} t} =-\left(a +k +m \right) X_2 -Y_2 +k^{2}X_2^{2}-\left(2 m +a +k \right) X_2Y_2 -Y_2^{2}-k^{3}X_2^{3}+k^{2} X_2^{2}Y_2- m X_2Y_2^{2} +P_3(X_2,Y_2).
\end{array}\right.
$$

In the next step, we make the following transformations to the above system
$$
\begin{bmatrix}
X_2 \\Y_2

\end{bmatrix}=\begin{bmatrix}
-\f{1}{a+k+m} & 0\\
1 &1
\end{bmatrix}\begin{bmatrix}
X_3 \\Y_3

\end{bmatrix},
$$
and letting $\tau=-t$, for which we will retain $t$ to denote $\tau$ for notational simplicity, we get
\begin{equation}
\left\{\begin{array}{l}
\displaystyle\frac{\mathrm{d} X_3}{\mathrm{d} t} =-\f{b \left(-1+a +k +m \right) }{\left(a +k +m \right)^{2}}{X_3}^{2}-\f{b }{a +k +m}X_3Y_3, \vspace{2ex}\\
\displaystyle\frac{\mathrm{d} Y_3}{\mathrm{d} t} =Y_3+c_{20}X_3^2+c_{11}X_3Y_3+Y_3^2+c_{30}X_3^3+c_{21}X_3^2Y_3+c_{12}X_3Y_3^2+P_4(X_3,Y_3),
\end{array}\right.
\label{14}
\end{equation}
where
\begin{flalign}
\begin{split}
& c_{20}=\f{m a +k^{2}+m k +m^{2}}{\left(a +k +m \right)^{2}}, \quad c_{11}=\f{a +k}{a +k +m},\quad c_{21}=-\f{2 m a +k^{2}+2 m k +2 m^{2}}{\left(a +k +m \right)^{2}},\quad c_{12}=-\f{m}{a +k +m},\\
&c_{30}=-\f{a^{2} m +k^{2} a +2 a k m +2 a \,m^{2}+2 k^{3}+2 k^{2} m +2 k \,m^{2}+m^{3}}{\left(a +k +m \right)^{3}}.\nonumber
\end{split}&
\end{flalign}
We define $m^{**} = 1-a-k$. Hence by Theorem 7.1, if $0<m<m^{**}$, $E_2$ is an attracting saddle-node, and the parabolic sector is on the right half-plane (Fig. 2(a)). If $m>m^{**}$, $E_2$ is an attracting saddle-node, and the parabolic sector is on the left half-plane (Fig. 2(b)). If $m=m^{**}$, system \eqref{14} becomes
\begin{equation}
\left\{\begin{array}{l}
\displaystyle\frac{\mathrm{d} X_3}{\mathrm{d} t} =-bX_3Y_3, \vspace{2ex}\\
\displaystyle\frac{\mathrm{d} Y_3}{\mathrm{d} t} =Y_3+c_{20}X_3^2+c_{11}X_3Y_3+Y_3^2+c_{30}X_3^3+c_{21}X_3^2Y_3+c_{12}X_3Y_3^2+P_4(X_3,Y_3).
\end{array}\right.
\label{15}
\end{equation}
By using the second equation of system \eqref{15}, we obtain the implicit function
$$
Y_3=-c_{20}X_3^2+(c_{11}c_{20}-c_{30})X_3^3+\cdots \cdots
$$
and
$$
\displaystyle\frac{\mathrm{d} X_3}{\mathrm{d} t} =bc_{20}X_3^3+\cdots \cdots,
$$
where
$$
bc_{20}=\f{b(m a +k^{2}+m k +m^{2})}{\left(a +k +m \right)^{2}}>0.
$$
According to Theorem 7.1 again, $E_2$ is a degenerate stable node due to the negative time transformations (Fig. 2(c)).
\ep
\begin{remark}
The biological significance of the parameters $k$ and $c$ are the fear effect of non-toxic $(y)$ species and the interspecific competition rate of toxic $(x)$ species, respectively. By analyzing the type of boundary equilibria, non-toxic and toxic species will become extinct when $k>k^*$ and $c>1$, respectively.
\end{remark}
\begin{figure}[h]
\centering
\scalebox{0.35}[0.35]{
\includegraphics{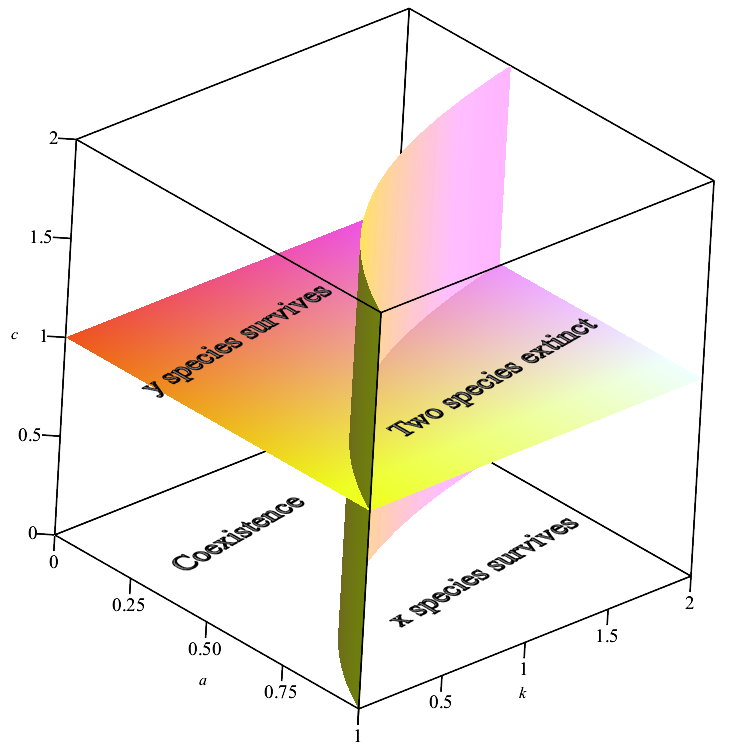}}
\caption{Schematic representation of the biological significance of parameters $k$ and $c$. Range of parameters: $a\in \left ( 0,1 \right )$, $k\in \left ( 1,2 \right )$, $c\in \left ( 0,2 \right )$.}
\label{boundary}
\end{figure}

\section{Positive Equilibria and Their Types}
The intersections of two isoclines $f(x,y)=0$, $g(x,y)=0$ in the first quadrant is the point of positive equilibria. Denote the positive equilibria of system \eqref{2} as $E_{i*}(x_{i},y_{i})$ (i=1, 2, 3), from $f(x,y)=g(x,y)$, we obtain
\begin{equation}
u(x)=A_1x^3+A_2x^2+A_3x+A_4,
\label{16}
\end{equation}
\begin{equation}
v(x)=u'(x)=3A_1x^2+2A_2x+A_3,
\label{17}
\end{equation}
where
$$
A_1=km>0,
$$
$$
A_2=(-ac-m+1)k+m=(A_3+k)k+m,
$$
$$
A_3=-ac-k-m+1,
$$
$$
A_4=c-1.
$$
Denote the discriminant of \eqref{17} as $\Delta=4A_2^2-12A_1A_3$. When $\Delta>0$, \eqref{17} has two real roots, which can be expressed as follows:
$$
x_{v1}=\displaystyle\frac{(ac+m-1)k-m-\sqrt{\Delta} }{3km}, \quad x_{v2}=\displaystyle\frac{(ac+m-1)k-m+\sqrt{\Delta} }{3km}.
$$
Let $u(x)=0$, we have
\beq
m=\f{a c kx^{2}+a c x -kx^{2}+k x -c -x +1}{\left(-1+x \right) \left(k x +1\right) x}.
\label{18}
\eeq
Substituting \eqref{18} into $\mathrm{det}(J(E)) $ and $v(x)$, we get
\beq
\mathrm{det}(J(E))=-\f{x \left(-1+x \right) b}{\left(k x +1\right) c} v(x).
\label{19}
\eeq

The positive of system \eqref{2} is $(x_i,y_i)$ where $y_i=\f{1-x_i}{c}$. Let $m_1\triangleq1-ac-k$ and $m_2\triangleq\f{2 a c k +a c -k -1}{1+k}$. From Theorem 1 and 2, we know that $0< x(t) < 1$ and $0< y(t) < 1$. By a simple analysis, we can obtain the following theorem.
\begin{figure}[h]
\centering
\subfigure[$m=m_1$]
{\scalebox{0.3}[0.3]{
\includegraphics{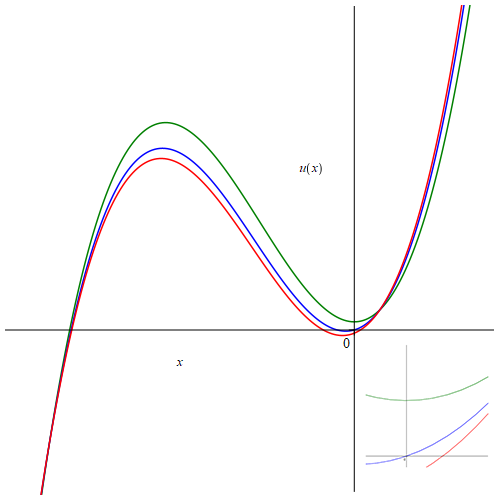}}}
\subfigure[ $m>m_1$, $c>1$]
{\scalebox{0.3}[0.3]{
    \includegraphics{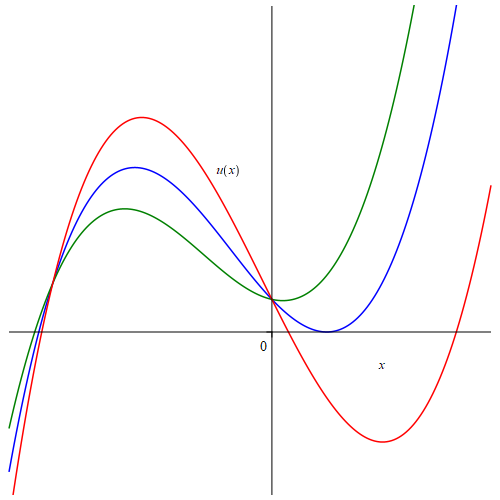}}}
    \subfigure[ $m>m_1$, $0<c\le1$]
{\scalebox{0.3}[0.3]{
    \includegraphics{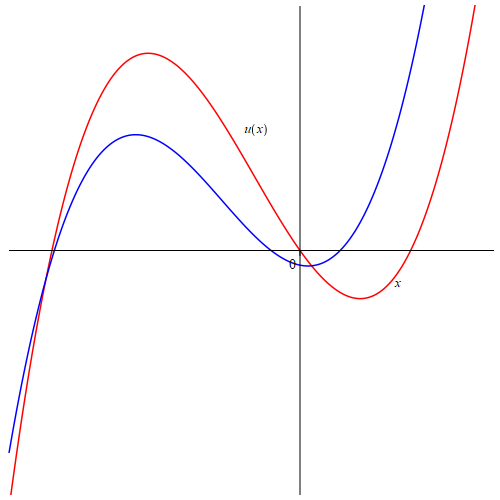}}}
    \subfigure[$0<m<m_1$]
{\scalebox{0.3}[0.3]{
    \includegraphics{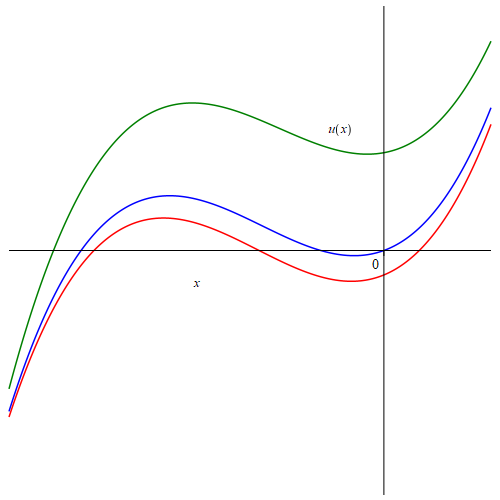}}}
\caption{The number of positive real roots of $u(x)$. }
\end{figure}
\bt\label{Thm5}
The existence of positive equilibria for system \eqref{2} is shown below:
\begin{enumerate}
\item $m=m_1$ (Fig. 4(a))
\begin{enumerate}
\item For $0<c<1$,
\begin{enumerate}
\item System \eqref{2} has a unique positive equilibrium $E_{2*}$ when $0<k<k^*$.
\end{enumerate}
\end{enumerate}
\item $m>m_1$
\begin{enumerate}
\item For $c>1$ (Fig. 4(b)),
\begin{enumerate}
\item If $u(x_{v2})=0$,
\begin{enumerate}
\item System \eqref{2} has a unique positive equilibrium $E_{3*}$ when $m>m_2$.
\end{enumerate}
\item If $u(x_{v2})<0$,
\begin{enumerate}
\item System \eqref{2} has a unique positive equilibrium $E_{1*}$ when $m>m_2$ and $k \ge k^*$.
\item System \eqref{2} has two positive equilibria $E_{1*}$ and $E_{2*}$ when $m>m_2$ and $0<k<k^*$.
\item System \eqref{2} has a unique positive equilibrium $E_{1*}$ when $m=m_2$.
\item System \eqref{2} has a unique positive equilibrium $E_{1*}$ when $0<m<m_2$ and $k>k^*$.
\end{enumerate}
\end{enumerate}
\item For $0<c\le 1$ (Fig. 4(c)),
\begin{enumerate}
\item System \eqref{2} has a unique positive equilibrium $E_{2*}$ when $0<k< k^*$.
\end{enumerate}
\end{enumerate}
\item $0<m<m_1$ (Fig. 4(d))
\begin{enumerate}
\item  For $0<c<1$,
\begin{enumerate}
\item  System \eqref{2} has a unique positive equilibrium $E_{2*}$ when and $0<k<k^*$.
\end{enumerate}
\end{enumerate}
\end{enumerate}
\et

Next, we analyze the types of positive equilibria. Since $-\f{x \left(-1+x \right) b}{\left(k x +1\right) c}>0$, we can easily determine the sign of $\mathrm{det}(J(E_*))$ by \eqref{19}. We conclude that $\mathrm{det}(J(E_{1*}))<0$, $\mathrm{det}(J(E_{2*}))>0$, $\mathrm{det}(J(E_{3*}))=0$. Therefore $E_{1*}$ is a saddle point. For $E_{2*}$, we have
$$
\mathrm{det}(J(E_{2*}))=B_1B_4-B_2B_3>0.
$$
The signs of $B_2$, $B_3$, and $B_4$ have been determined, and we can thus know that $B_1<0$. Finally, we can determine that $\mathrm{tr}(J(E_{2*}))=B_1+B_4<0$ by the above analysis. From $\mathrm{det}(J(E_{2*}))>0$, $\mathrm{tr}(J(E_{2*}))<0$, we know that $E_{2*}$ is a stable node.

Since $\mathrm{det}(J(E_{3*}))=0$, the positive equilibrium point $E_{3*}$ is clearly a degenerate equilibrium point. Next, we analyze the specific type of degenerate equilibrium point $E_{3*}$.

First, it is clear from Theorem 5 and Fig. 4(b) that if $E_{3*}$ exists then the parametric condition needs to satisfy $u(E)=v(E)=0$, where $E=x_{v2}$. From this, we get
$$
a=\frac{3 k mx^{2}-2 k m x +2 k x +2 m x -k -m +1}{k^{2} m x^{4}+2 k m x^{3}+k^{2} x^{2}+mx^{2}+2 k x +1}\triangleq a^*,
$$
$$
c=\frac{k^{2} mx^{4}+2 k mx^{3}+k^{2} x^{2}+mx^{2}+2 k x +1}{2 k x +1}\triangleq c^*.
$$

We move equilibrium $E_{3*}$ to the origin by transforming $(X, Y)=(x-E, y-\f{1-E}{c})$, make Taylor's expansion around the origin, and substitute $a=a^*$, $c=c^*$, then system \eqref{2} becomes
\begin{equation}
\left\{\begin{array}{l}
\displaystyle\frac{\mathrm{d} X}{\mathrm{d} t} =e_{10}X+e_{01}Y+e_{20}X^2+e_{11}XY, \vspace{2ex}\\
\displaystyle\frac{\mathrm{d} Y}{\mathrm{d} t} =d_{10}X+d_{01}Y+d_{20}X^2+d_{02}Y^2+d_{11}XY+P_5(X,Y),
\end{array}\right.
\label{20}
\end{equation}
\begin{table}[h]
\tbl{Positive Equilibria of System  \eqref{2}.}
{\begin{tabular}{l c c c c c c}\\[-10pt]
\toprule
 \multicolumn{1}{c}{$m\sim m_1$}&$c$ &$u(E)$&$m\sim m_2$&$k$&Positive Equilibria\\[3pt]
\hline\\[0pt]
$m=m_1$&$0<c<1$ &/&/&$0<k<k^* $&$E_{2*}$\\[5pt]
\hline\\[0pt]
 \multirow{6}{*}{$m>m_1$}  &\multirow{5}{*}{$c>1$} &$u(E)=0$&$m>m_2$&/ &$E_{3*}$ \\[5pt]
\cline{3-6}\\
&&\multirow{4}{*}{$u(E)<0$} &\multirow{2}{*}{$m>m_2$}&$k\ge k^*$&$E_{1*}$ \\[5pt]
\cline{5-6}\\
&&&&$0<k<k^*$&$E_{1*}$, $E_{2*}$ \\[5pt]
\cline{4-6}\\
&&&$m=m_2$&/&$E_{1*}$ \\[5pt]
\cline{4-6}\\
&&&$0<m<m_2$&$k>k^*$&$E_{1*}$ \\[5pt]
\cline{2-6}\\
&$0<c\le 1$&/&/&$0<k<k^*$&$E_{2*}$\\[5pt]
\hline\\
$0<m<m_1$&$0<c<1$&/&/&$0<k<k^*$&$E_{2*}$\\[5pt]
\botrule
\end{tabular}}
\begin{tabnote}
$E_{1*}$ is a saddle, $E_{2*}$ is a stable node, and $E_{3*}$ is a saddle-node, where $m_1=1-ac-k$, $m_2=\frac{2 a c k +a c -k -1}{1+k}$, $k^*=\frac{1}{a}-1$.
\end{tabnote}
\end{table}
where 
\begin{flalign}
\begin{split}
& e_{10}=-bE, \quad e_{01}=-\frac{b E \left(E k +1\right)^{2} \left(E^{2} m +1\right)}{2 E k +1}, \quad e_{20}=-b, \quad e_{11}=-\frac{b \left(E k +1\right)^{2} \left(E^{2} m +1\right)}{2 E k +1},\\
& d_{10}=\frac{\left(E m +1\right) \left(2 E k +1\right)^{2} \left(-1+E \right)}{\left(E k +1\right)^{4} \left(E^{2} m +1\right)^{2}}, \quad d_{01}=\frac{\left(-1+E \right) \left(E m +1\right) \left(2 E k +1\right)}{\left(E k +1\right)^{2} \left(E^{2} m +1\right)},\\
& d_{20}=-\frac{\left(-1+E \right) \left(2 E k +1\right) k^{2}}{\left(E k +1\right)^{5} \left(E^{2} m +1\right)}, \quad d_{02}=-Em-1, \quad  d_{11}=-\frac{\left(m +1\right) \left(2 E k +1\right)}{\left(E k +1\right)^{2} \left(E^{2} m +1\right)}.
\nonumber
\end{split}&
\end{flalign}

We make the following transformations to system \eqref{20}
$$
\begin{bmatrix}
X \\Y

\end{bmatrix}=\begin{bmatrix}
e_{01} &e_{10} \\
-e_{10} &d_{10}
\end{bmatrix}\begin{bmatrix}
X_4 \\Y_4
\end{bmatrix},
$$
and letting $\tau=L t$, where
$$
\begin{aligned}
L= &-\frac{E^{5} b \,k^{2} m +2 E^{4} b k m +E^{3} b \,k^{2}+E^{3} b m -2 E^{3} k m +2 E^{2} b k +2 E^{2} k m}{\left(E k +1\right)^{2} \left(E^{2} m +1\right)}\\
&+\frac{+2 E^{2} k+E^{2} m -b E -2 E k-E m +E-1}{\left(E k +1\right)^{2} \left(E^{2} m +1\right)},
\end{aligned}
$$
for which we will retain $t$ to denote $\tau$ for notational simplicity. We get
\begin{equation}
\left\{\begin{array}{l}
\displaystyle\frac{\mathrm{d} X}{\mathrm{d} t} =g_{20}X^2+g_{02}Y^2+g_{11}XY, \vspace{2ex}\\
\displaystyle\frac{\mathrm{d} Y}{\mathrm{d} t} =Y+f_{20}X^2+f_{02}Y^2+f_{11}XY+P_6(X,Y),
\end{array}\right.
\label{21}
\end{equation}
where
$$
g_{20}=\frac{\left(E^{2} m +1\right)^{2} \left(E k +1\right)^{3} \left(-1+E \right) \left(3 E^{2} k^{2} m +3 E k m +k^{2}+m \right) E^{2} b^{2}}{H^{2} \left(2 E k +1\right)},
$$
and
$$
\begin{aligned}
H=&E^{5} bk^{2} m +2 E^{4} b k m +E^{3} bk^{2}+E^{3} b m -2 E^{3} k m +2 E^{2} b k\\
&+2 E^{2} k m -2 E^{2} k -E^{2} m +b E +2 E k +E m -E +1,
\end{aligned}
$$
 please see Appendix A for the rest of the parameters.

We note that $g_{20}<0$. Hence by Theorem 7.1 in Chapter 2 in \cite{Zhang92}, $E_{3*}$ is a saddle-node. In summary, together with Theorem 5, we obtain Table 1.

\section{Global Stability of Positive Equilibria}
\begin{lemma}
{\it Bendixson-Dulac} Criteria \cite{Ma15}:

If in a single connected domain $O$, there exists a function $B(x,y) \in C^1(O)$, such that
$$
\frac{\partial (BF)}{\partial x} +\frac{\partial (BG)}{\partial y}\ge 0(\le 0), \quad \forall (x,y)\in O,
$$
and is not constant to zero in any subregion of O. Then system \eqref{2.3} does not have closed trajectories that all lie within O and singular closed trajectories with finitely many singular points. The function $B(x,y)$ is often called the Dulac function.
\end{lemma}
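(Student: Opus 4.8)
The plan is to recognize this as the classical Dulac criterion and to prove it by contradiction using Green's theorem in the plane; I would not look for anything clever, since the statement is the textbook one. The strategy: assume the conclusion fails, extract a closed curve lying in $O$, integrate the quantity $\frac{\partial (BF)}{\partial x}+\frac{\partial (BG)}{\partial y}$ over the region it bounds, and then observe that the corresponding boundary integral must vanish along trajectories while the area integral cannot, because the integrand is sign-definite and not identically zero on any subregion.

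Concretely, here are the steps I would carry out in order. First, suppose there is a simple closed curve $\Gamma\subset O$ that is either a periodic orbit of \eqref{2} or a singular closed trajectory, i.e.\ a cyclic union of finitely many trajectory arcs meeting at finitely many equilibria. Since $\Gamma$ is a Jordan curve and $O$ is simply connected, the bounded open region $D$ enclosed by $\Gamma$ satisfies $\overline{D}\subset O$, so $B,F,G\in C^1(\overline{D})$ and Green's theorem is applicable on $D$. Second, I would write $\iint_D\bigl(\frac{\partial (BF)}{\partial x}+\frac{\partial (BG)}{\partial y}\bigr)\,dx\,dy=\oint_\Gamma\bigl(BF\,dy-BG\,dx\bigr)$ with $\Gamma$ positively oriented, and evaluate the right-hand side arc by arc: along any trajectory arc, parametrizing by the flow of \eqref{2} gives $dx=F\,dt$ and $dy=G\,dt$, hence $BF\,dy-BG\,dx=B(FG-GF)\,dt\equiv 0$; the finitely many equilibrium ``corners'' have zero arclength, so the whole contour integral equals $0$. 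Third, I would invoke the hypothesis: $\frac{\partial (BF)}{\partial x}+\frac{\partial (BG)}{\partial y}$ has one fixed sign on $O$ and is not identically zero on any subregion, hence it is nonzero on a subset of $D$ of positive measure, so its integral over $D$ (which has positive area) is strictly positive or strictly negative — contradicting the vanishing of the contour integral. Therefore no such $\Gamma$ exists.

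The only part that genuinely needs care, and essentially the only non-bookkeeping point, is the singular closed trajectory case. There I must confirm that $\Gamma$ is truly piecewise $C^1$, so that Green's theorem is legitimate — which holds because each constituent arc is an orbit of a smooth (indeed polynomial) vector field and there are only finitely many corners — and that the equilibria at the corners contribute nothing, which is automatic since they are finitely many points. A minor supporting remark is the measure-theoretic fact that a continuous, sign-definite function not vanishing on any open subset has nonzero integral over every set of positive area; and one should note that simple connectedness of $O$ is used precisely once, to guarantee $D\subset O$, and cannot be dropped. Since the lemma is quoted from \cite{Ma15}, it is enough in the paper to cite it, but the argument above is the proof one would reconstruct.
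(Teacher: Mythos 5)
The paper does not prove this lemma at all: it is stated verbatim as a known result and attributed to the reference \cite{Ma15}, so there is no in-paper argument to compare against. Your reconstruction is the standard and correct proof of the Dulac criterion — Green's theorem on the region bounded by the hypothetical closed orbit, vanishing of the contour integral $\oint_\Gamma (BF\,dy - BG\,dx)$ because the $1$-form annihilates vectors tangent to the flow, and non-vanishing of the area integral from sign-definiteness — and your identification of where simple connectedness is used is exactly right. The one point you gloss over slightly is that a singular closed trajectory need not be a simple closed curve (e.g., a figure-eight of homoclinic loops); the full argument decomposes such a curve into finitely many Jordan loops and applies the same contradiction to each, but this is bookkeeping and does not affect the substance. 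Since the paper's ``proof'' is the citation itself, your version is a legitimate and complete substitute.
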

\begin{theorem}
System \eqref{2} cannot have any limit cycle in the interior of the positive quadrant $R_+^2$.
\end{theorem}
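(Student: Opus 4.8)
The plan is to apply the Bendixson--Dulac criterion with the classical Lotka--Volterra-type Dulac function
$$
B(x,y)=\frac{1}{xy},
$$
which is of class $C^1$ on the simply connected domain $O=\mathrm{Int}(R_+^2)$, where both $x$ and $y$ are strictly positive. First I would form $BF$ and $BG$ explicitly: since $F(x,y)=bx(1-x-cy)$ we get $BF=\dfrac{b(1-x-cy)}{y}$, and since $G(x,y)=y\!\left(\dfrac{1}{1+kx}-y-ax-mxy\right)$ we get
$$
BG=\frac{1}{x(1+kx)}-\frac{y}{x}-a-my .
$$

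Next I would compute the divergence of $(BF,BG)$. Differentiating $BF$ in $x$ kills the lone term that depends on $x$, leaving $\dfrac{\partial (BF)}{\partial x}=-\dfrac{b}{y}$. Differentiating $BG$ in $y$ kills the two $y$-independent terms $\dfrac{1}{x(1+kx)}$ and $-a$, leaving $\dfrac{\partial (BG)}{\partial y}=-\dfrac{1}{x}-m$. Adding,
$$
\frac{\partial (BF)}{\partial x}+\frac{\partial (BG)}{\partial y}=-\frac{b}{y}-\frac{1}{x}-m<0
\qquad\text{for all }(x,y)\in O,
$$
because $b,m>0$ and $x,y>0$. In particular this expression is strictly negative everywhere on $O$, hence certainly not identically zero on any subregion, so the hypotheses of the Bendixson--Dulac lemma are satisfied.

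By that lemma, system \eqref{2} has no closed trajectory lying entirely in $\mathrm{Int}(R_+^2)$; in particular it has no limit cycle there, which is the assertion. I do not anticipate any real obstacle: the only ``creative'' step is guessing the Dulac function, and the standard reciprocal-product choice works verbatim here because the fear term $\tfrac{1}{1+kx}$ and the toxic term $-mxy$ contribute either an $x$-only term or a term that is linear in $y$, so their awkward parts drop out upon taking the relevant partial derivatives. One should just be careful to note explicitly that $\mathrm{Int}(R_+^2)$ is simply connected and that $B\in C^1(O)$, so that the criterion genuinely applies.
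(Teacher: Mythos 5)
Your proposal is correct and follows essentially the same route as the paper: the authors also take the Dulac function $B(x,y)=\frac{1}{xy}$ on $\mathrm{Int}(R_+^2)$ and compute $\frac{\partial (BF)}{\partial x}+\frac{\partial (BG)}{\partial y}=-\frac{b}{y}-\frac{1}{x}-m<0$, concluding via Bendixson--Dulac. Your explicit computation of $BF$ and $BG$ and the sign check match the paper's argument verbatim.
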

\begin{proof}
We use the {\it Bendixson-Dulac} criteria \cite{Ma15} to prove Theorem 6. Construct a Dulac function $B(x,y)=\displaystyle\frac {1}{xy}$. Then it is clear that $B(x,y)$ is positive and so is smooth in a connected domain:
$$
\mathrm{Int} (R_+^2)=\left \{ (x,y)\in R^2 \mid  x>0, y>0 \right \}.
$$

Let
$$
\Delta (x,y)=\frac{\partial (BF)}{\partial x} +\frac{\partial (BG)}{\partial y}=-\frac{b}{y}+\frac{-m x -1}{x} <0.
$$
Thus, $\Delta (x,y)$ is neither identically zero nor changing sign in the interior of the positive quadrant of the $xy$-plane. Using the {\it Bendixson-Dulac} criteria \cite{Ma15}, system \eqref{2} has no closed trajectory, so there is no periodic solution in the first quadrant. 

The proof of Theorem 6 is finished. 
\end{proof}

From Theorem 5 and Table 1, when system \eqref{2} satisfies $0<c<1$, $0<k<k^*$, the boundary equilibria are all unstable, and there is a unique stable positive equilibrium $E_{2*}$ in system \eqref{2}. Since Theorem 6 has proved that system \eqref{2} cannot have any limit cycle in the interior of the positive quadrant, we can obtain the following theorem.

\begin{theorem}\label{Thm:7}
The locally stable positive equilibria $E_{2*}$ is globally stable when $0<c<1$, $0<k<k^*$.
\end{theorem}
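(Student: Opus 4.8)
The plan is to run the Poincar\'e--Bendixson machinery on the interior of the first quadrant, using permanence to trap $\omega$-limit sets away from the boundary and the Dulac function of Theorem 6 to exclude every recurrent structure other than a single equilibrium. Concretely, fix $0<c<1$, $0<k<k^*$ and an initial point $(x_0,y_0)\in\mathrm{Int}(R_+^2)$. First I would invoke Theorem~\ref{thm2.5}: under exactly these hypotheses the system is permanent, so there are constants $0<m\le M$ (independent of the solution) with $m\le\liminf_{t\to\infty}x(t)\le\limsup_{t\to\infty}x(t)\le M$ and likewise for $y$. Hence the forward orbit, after a finite time, lies in a compact rectangle $\mathcal{K}\subset\mathrm{Int}(R_+^2)$, so its $\omega$-limit set $\omega(x_0,y_0)$ is nonempty, compact, connected, invariant, and contained in $\mathrm{Int}(R_+^2)$.

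Next I would record two structural facts. (i) Every point of $\omega(x_0,y_0)$ has both coordinates bounded below by $m>0$, so $\omega$ cannot contain any of the boundary equilibria $E_0,E_1,E_2$, each of which has a vanishing coordinate; therefore the only equilibria that $\omega$ could contain are interior ones, and by Theorem~\ref{Thm5} together with Table 1 (the case $0<c<1$, $0<k<k^*$) the unique interior equilibrium is $E_{2*}$, which by the analysis following Theorem~\ref{Thm5} satisfies $\det J(E_{2*})>0$, $\mathrm{tr}\,J(E_{2*})<0$ and is thus a locally asymptotically stable node. (ii) The Dulac computation of Theorem 6 with $B(x,y)=\frac{1}{xy}$ gives $\frac{\partial(BF)}{\partial x}+\frac{\partial(BG)}{\partial y}=-\frac{b}{y}-\frac{mx+1}{x}<0$ on all of $\mathrm{Int}(R_+^2)$, which rules out not only periodic orbits but also any ``singular closed trajectory with finitely many singular points'' (separatrix cycle/graphic) lying entirely in $\mathrm{Int}(R_+^2)$, exactly as in the Bendixson--Dulac lemma quoted above.

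Now I would apply the Poincar\'e--Bendixson theorem to $\omega(x_0,y_0)\subset\mathrm{Int}(R_+^2)$: it must be a single equilibrium, a periodic orbit, or a finite collection of equilibria joined by heteroclinic/homoclinic orbits. Fact (ii) eliminates the last two possibilities, so $\omega(x_0,y_0)$ is a single equilibrium, and fact (i) forces it to be $E_{2*}$. Since $(x_0,y_0)\in\mathrm{Int}(R_+^2)$ was arbitrary, every interior solution converges to $E_{2*}$; combined with the local asymptotic stability of $E_{2*}$ from (i), this yields global asymptotic stability of $E_{2*}$ in $\mathrm{Int}(R_+^2)$.

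\textbf{Expected main obstacle.} The only genuinely delicate point is setting up the Poincar\'e--Bendixson argument on a set where it is valid and where the ``graphic'' alternative is truly killed: one must combine permanence (to push $\omega$ strictly into the open quadrant, hence away from $E_0,E_1,E_2$ and their possible heteroclinic loops along the axes) with the \emph{strict} sign of the Dulac expression (to exclude interior graphics, not merely periodic orbits). Once this is in place, uniqueness of the interior equilibrium read off from Table 1 and the stability classification of $E_{2*}$ finish the proof with no further computation.
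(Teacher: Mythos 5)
Your proof is correct and follows essentially the same route as the paper: uniqueness and local stability of $E_{2*}$ from Theorem~\ref{Thm5}/Table 1, the Dulac function $B=\frac{1}{xy}$ to exclude closed orbits and graphics, and Poincar\'e--Bendixson to conclude. The only difference is that you invoke permanence (Theorem~\ref{thm2.5}, which holds under exactly these hypotheses) to push the $\omega$-limit set off the boundary, whereas the paper appeals more loosely to the instability of the boundary equilibria; your version is the more careful rendering of the same argument.
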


\section{Bifurcation Analysis}
\subsection{Transcritical bifurcation}
In proving Theorem 5, we found an interesting phenomenon: when $u(1)=0$, i.e., $k=k^*$, the positive equilibrium point $E_{2*}$ will merge with the boundary equilibrium point $E_1$. Also, the stability of the boundary equilibrium point $E_1$ will change when the parameter $k$ is in different intervals $(0,\frac{1}{a}-1)$ and $(\frac{1}{a}-1,+\infty )$, respectively. Moreover, we find a similar phenomenon for the boundary equilibrium point $E_2$. From this, we conjecture that system \eqref{3} experiences transcritical bifurcations around $E_1$ and $E_2$. We proceed to a rigorous proof below.
\begin{figure}[h]
\centering
\subfigcapskip=-25pt
\subfigure[$k<k^*$]
{\scalebox{0.45}[0.45]{
\includegraphics{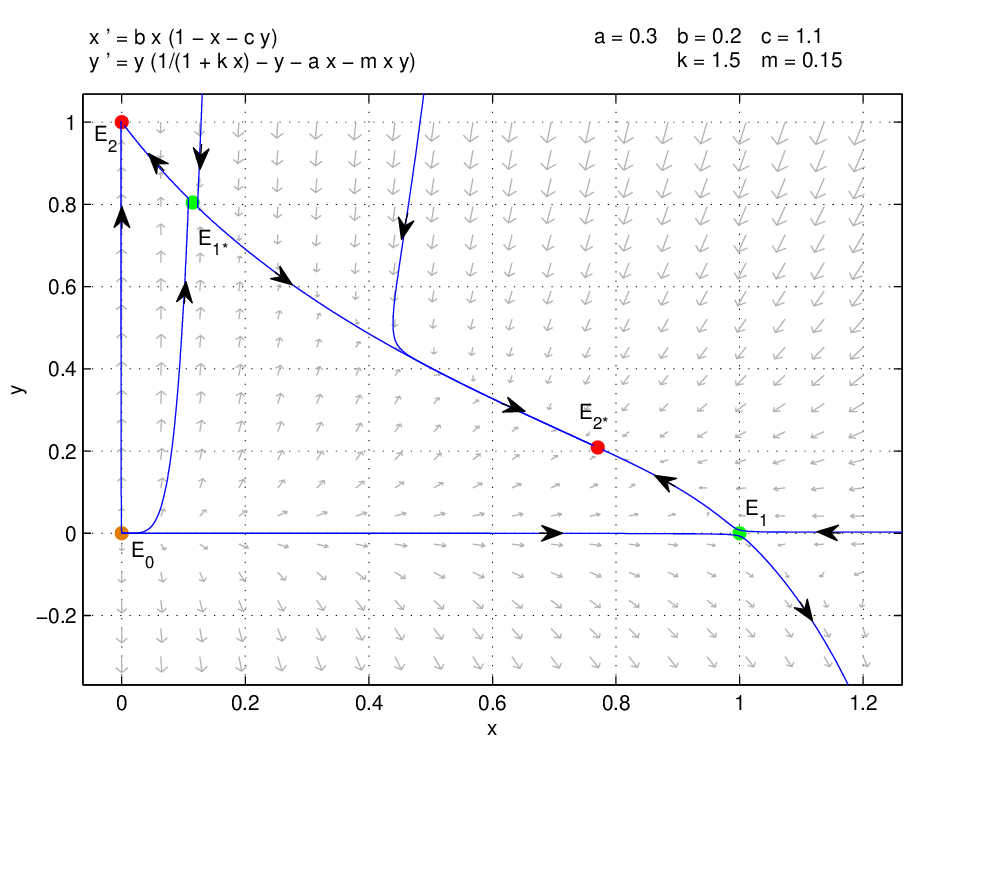}}}
\subfigure[ $k=k^*$]
{\scalebox{0.45}[0.45]{
\includegraphics{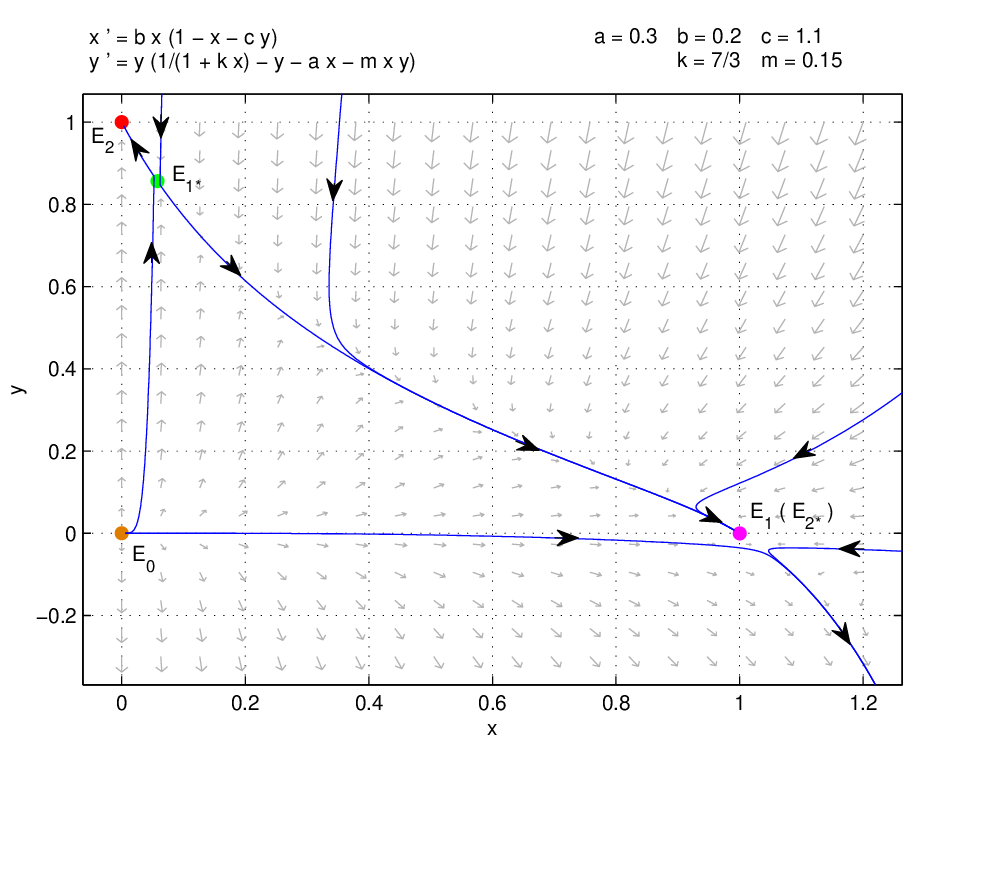}}}
\subfigure[ $k>k^*$]
{\scalebox{0.45}[0.45]{
\includegraphics{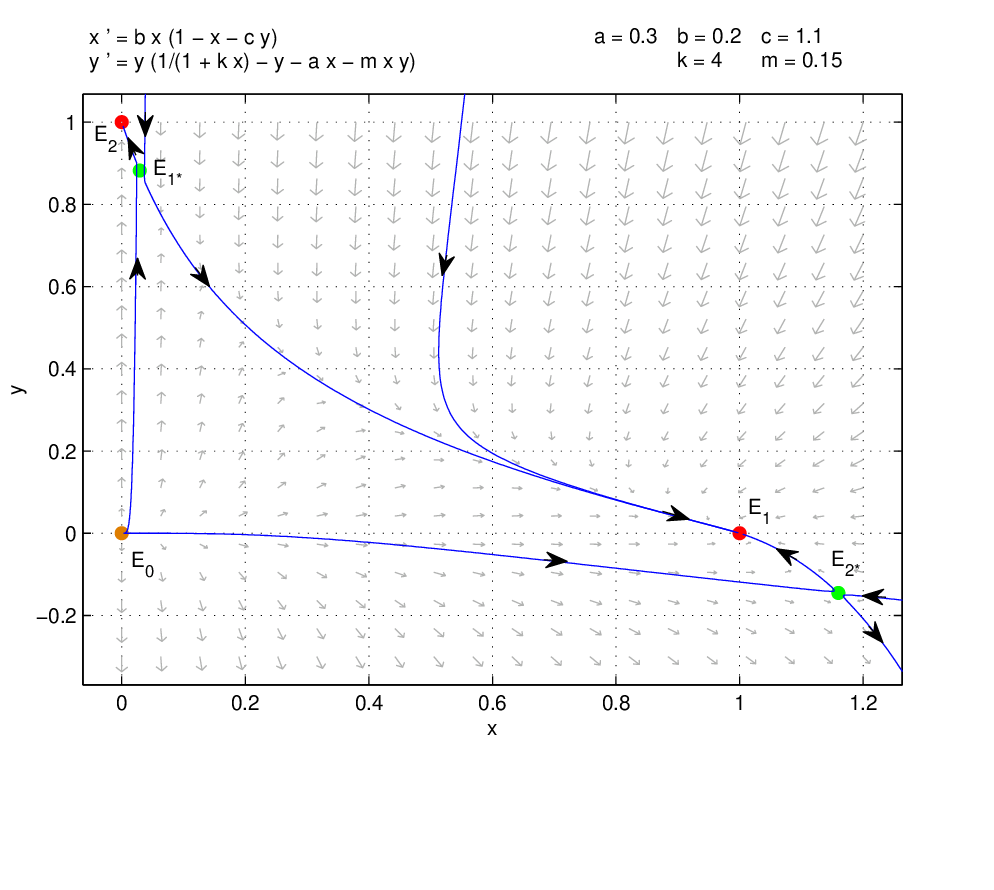}}}
\caption{Red, green, pink, and orange points indicate stable node, saddle, saddle-node, and unstable node (source), respectively. System \eqref{2} undergoes a transcritical bifurcation around $E_1$.}
\end{figure}
\begin{theorem}
System \eqref{2} undergoes a transcritical bifurcation around $E_1$ at the bifurcation parameter threshold $k_{TR} = k^*$ when $u(E)<0$ and $m\ne-a^{2} c +2 a c -1$ (Fig. 5).
\end{theorem}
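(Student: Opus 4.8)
The plan is to apply Sotomayor's theorem for transcritical bifurcations at the equilibrium $E_1=(1,0)$ with $k$ as the bifurcation parameter. First I would record that at $k=k^*=\frac1a-1$ the Jacobian $J(E_1)$ computed in \eqref{8} has eigenvalues $\lambda_1^{E_1}=-b<0$ and $\lambda_2^{E_1}=\frac{1}{1+k}-a=0$, so $E_1$ is a non-hyperbolic equilibrium with a simple zero eigenvalue; the corresponding right eigenvector $V=(v_1,v_2)^{T}$ of $J(E_1)\big|_{k=k^*}$ and the left eigenvector $W=(w_1,w_2)^{T}$ of $J(E_1)^{T}\big|_{k=k^*}$ must be produced explicitly. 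From the block-triangular form of $J(E_1)$ one gets $W=(0,1)^{T}$ (since the first column is $(-b,0)^{T}$), and $V$ is obtained by solving $J(E_1)V=0$, giving $V$ proportional to $(-c,\,1)^{T}$ after normalization (here one uses $b\ne 0$, $c>0$).

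Next I would verify the three Sotomayor transversality conditions. Writing the vector field as $\Phi(x,y;k)=(F,G)^{T}$ from \eqref{2}: (i) $W^{T}\Phi_k(E_1;k^*)=0$ — this holds automatically because $\Phi_k=(0,\;\partial_k G)^{T}$ evaluated at $y=0$ vanishes, since $G$ and all its $k$-derivatives carry a factor $y$; (ii) $W^{T}\big[D\Phi_k(E_1;k^*)V\big]\ne 0$ — here $D\Phi_k$ is the Jacobian of $\Phi_k$ in $(x,y)$, and because $W=(0,1)^{T}$ only the bottom row matters; the relevant entry is $\partial_y\big(\partial_k G\big)\big|_{E_1}=-\frac{k}{(1+k)^2}\big|_{k=k^*}$, which combined with $v_2=1$ gives a nonzero quantity, and this is where the condition $m\ne -a^{2}c+2ac-1=m^*$ should \emph{not} actually be needed for the eigenvalue-crossing part but will show up in (iii); (iii) $W^{T}\big[D^2\Phi(E_1;k^*)(V,V)\big]\ne 0$ — this is the genuinely computational step: one expands the quadratic part of $G$ (the only component that matters since $w_1=0$) at $E_1$ in the direction $V=(-c,1)^{T}$, obtaining a combination of $\partial_{xx}G,\partial_{xy}G,\partial_{yy}G$ at $(1,0)$; using the Taylor expansion of the $Y$-equation already displayed in the proof of Theorem~\ref{Thm4} (with coefficients $-(1+m)$, $a(a-2)$, etc.), this reduces to an explicit expression whose nonvanishing is precisely governed by $m\ne m^{*}$. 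The hypothesis $u(E)<0$ enters to guarantee (via Theorem~\ref{Thm5} and the sign analysis of $\det J$ in \eqref{19}) that on one side of $k^{*}$ the merging interior equilibrium $E_{2*}$ is a genuine positive equilibrium that exchanges stability with $E_1$, rather than leaving the quadrant — i.e.\ it confirms the bifurcation is transcritical rather than, say, a saddle-node absorbing $E_1$.

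I would then conclude by quoting Sotomayor's theorem: conditions (i)--(iii) together with the simple zero eigenvalue imply that system \eqref{2} undergoes a transcritical bifurcation at $E_1$ as $k$ passes through $k_{TR}=k^{*}$, matching the exchange of stability between $E_1$ (saddle for $k<k^*$, stable node for $k>k^*$, by Theorem~\ref{Thm4}) and the interior branch $E_{2*}$ depicted in Fig.~5.

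The main obstacle I anticipate is the second-derivative transversality computation in step (iii): one must carefully assemble $D^2G(E_1)(V,V)$ from the Taylor coefficients, keep track of the factor-of-two conventions in the Hessian pairing, and simplify the result to exhibit the factor that vanishes exactly when $m=m^{*}=-a^{2}c+2ac-1$; everything else (finding eigenvectors, checking the $k$-derivative conditions) is short because the triangular structure of $J(E_1)$ makes $W=(0,1)^{T}$ and kills most terms.
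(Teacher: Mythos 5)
Your proposal follows essentially the same route as the paper's proof: Sotomayor's theorem at $E_1$ with right/left eigenvectors $V=(-c,1)^T$, $W=(0,1)^T$, the first transversality condition vanishing because $G_k$ carries a factor of $y$, the second giving $-a^2\neq 0$, and the third reducing to $(-2a^2+4a)c-2m-2\neq 0$, which is exactly $m\neq m^*=-a^2c+2ac-1$. One minor slip: the relevant entry in condition (ii) is $\partial_y\partial_k G\big|_{E_1}=-1/(1+k)^2=-a^2$, not $-k/(1+k)^2$, though either expression is nonzero so the conclusion is unaffected.
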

\begin{proof}
From Theorem 4, we know that the eigenvalues of $J(E_1)$ are $\lambda_1^{E_1}=-b$, $\lambda_2^{E_1}=0$ if $k=k_{TR}=k^*$. Now, let $\mathbf{V_1} = (v_1, v_2)^T$ and $\mathbf{W_1} = (w_1, w_2)^T$ be the eigenvectors of $J(E_1)$ and $J^T(E_1)$ corresponding to $\lambda_1^{E_1}=0$, respectively. By calculating, we obtain
\begin{equation}
\mathbf{V_1}=\begin{bmatrix}
v_1\\v_2

\end{bmatrix} =\begin{bmatrix}
-c \\1

\end{bmatrix},\mathbf{W_1}=\begin{bmatrix}
w_1\\w_2

\end{bmatrix} =\begin{bmatrix}
0 \\1
\end{bmatrix}.
\label{22}
\end{equation}
We assume that
$$
Q(x,y)=\begin{bmatrix}
F(x,y) \\G(x,y)

\end{bmatrix}=\begin{bmatrix}
b x \left(-c y -x +1\right)\\y \left(\f{1}{k x +1}-y -a x -m x y \right)
\end{bmatrix}.
$$
Furthermore,
$$
Q_k(E_1;k_{TR})=\begin{bmatrix}
\displaystyle\frac{\partial F}{\partial k} \vspace{2ex}\\\displaystyle\frac{\partial G}{\partial k}

\end{bmatrix}=\begin{bmatrix}
0\\0

\end{bmatrix},
$$
$$
\left. DQ_k(E_1;k_{TR})\mathbf{V_1}=\left[\begin{array}{cc}
0 & 0
\\
\f{2 y x k}{\left(k x +1\right)^{3}}-\f{y}{\left(k x +1\right)^{2}} & -\f{x}{\left(k x +1\right)^{2}}
\end{array}\right] \right |_{(E_1;k_{TR})}\begin{bmatrix}
-c \\1

\end{bmatrix}=\begin{bmatrix}
0 \\-a^2
\end{bmatrix},
$$
$$
\left.D^2Q(E_1;k_{TR})(\mathbf{V_1}, \mathbf{V_1})=\begin{bmatrix}
\displaystyle\frac{\partial^2F}{\partial x^2}v_1^2+ 2\displaystyle\frac{\partial^2F}{\partial x\partial y}v_1v_2+ \displaystyle\frac{\partial^2F}{\partial y^2}v^2_2\vspace{2ex}\\
\displaystyle\frac{\partial^2G}{\partial x^2}v_1^2+ 2\displaystyle\frac{\partial^2G}{\partial x\partial y}v_1v_2+ \displaystyle\frac{\partial^2G}{\partial y^2}v^2_2
\end{bmatrix}\right|_{(E_1;k_{TR})}=\begin{bmatrix}
0\\ \left(-2 a^{2}+4 a \right) c -2 m -2
\end{bmatrix}.
$$
Thus, we have
$$
\mathbf{W_1}^TQ_k(E_1;k_{TR})=0,
$$
$$
\mathbf{W_1}^T\left[ DQ_k(E_1;k_{TR})\mathbf{V_1} \right]=-a^2\ne0,
$$
$$
\mathbf{W_1}^T\left[ D^2Q(E_1;c_{TR})(\mathbf{V_1}, \mathbf{V_1}) \right]=\left(-2 a^{2}+4 a \right) c -2 m -2\ne0.
$$
Based on {\it Sotomayor's Theorem} \cite{Perko13} , all the transversality conditions for system \eqref{2} to experience a transcritical bifurcation are satisfied. Consequently, system \eqref{2} undergoes a transcritical bifurcation around $E_1$ at the bifurcation parameter threshold $k_{TR} = k^*$.
\end{proof}

\begin{theorem}
System \eqref{2} undergoes a transcritical bifurcation around $E_2$ at the bifurcation parameter threshold $c_{TR} =1$ when $u(E)<0$ and $m\ne1-a-k$ (Fig. 6).
\end{theorem}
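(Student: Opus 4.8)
The plan is to mirror exactly the proof of the previous theorem (transcritical bifurcation around $E_1$), but now at the equilibrium $E_2(0,1)$ with bifurcation parameter $c$ crossing the threshold $c_{TR}=1$, verifying the hypotheses of Sotomayor's theorem. First I would recall from Theorem \ref{Thm4} that when $c=c_{TR}=1$ the Jacobian $J(E_2)$ has eigenvalues $\lambda_1^{E_2}=b(1-c)=0$ and $\lambda_2^{E_2}=-1\neq 0$, so the hypothesis of a simple zero eigenvalue is met. Then I would compute a right null vector $\mathbf{V_2}=(v_1,v_2)^T$ of $J(E_2)$ and a left null vector $\mathbf{W_2}=(w_1,w_2)^T$ of $J^T(E_2)$ associated with the zero eigenvalue; from the structure of $J(E_2)=\left[\begin{smallmatrix} 0 & 0 \\ -a-k-m & -1\end{smallmatrix}\right]$ one reads off (up to scaling) $\mathbf{V_2}=(1,\,-(a+k+m))^T$ and $\mathbf{W_2}=(1,0)^T$, so that $\mathbf{W_2}$ annihilates the second component of everything — which is precisely why the first-component computations are the only ones that matter.

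Next I would set $Q(x,y)=(F(x,y),G(x,y))^T$ as in the $E_1$ proof, and evaluate the three Sotomayor quantities at $(E_2;c_{TR})$. Since $F(x,y)=bx(1-x-cy)$ is the only term involving $c$, we have $Q_c=(-bxy,\,0)^T$, which vanishes at $E_2=(0,1)$, giving $\mathbf{W_2}^T Q_c(E_2;c_{TR})=0$. For the mixed derivative term, $DQ_c = \left[\begin{smallmatrix} -by & -bx \\ 0 & 0 \end{smallmatrix}\right]$, so $DQ_c(E_2;c_{TR})\mathbf{V_2} = (-b\cdot 1\cdot 1 + 0, \,0)^T = (-b,0)^T$ and hence $\mathbf{W_2}^T[DQ_c(E_2;c_{TR})\mathbf{V_2}] = -b \neq 0$. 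For the second transversality condition I would form $D^2Q(E_2;c_{TR})(\mathbf{V_2},\mathbf{V_2})$ whose first component is $F_{xx}v_1^2 + 2F_{xy}v_1 v_2 + F_{yy}v_2^2$; since $F_{xx}=-2b$, $F_{xy}=-bc=-b$ at $c=1$, and $F_{yy}=0$, this equals $-2b v_1^2 - 2b v_1 v_2 = -2b(1)(1 + (-(a+k+m))) = -2b(1-a-k-m)$, and so $\mathbf{W_2}^T[D^2Q(E_2;c_{TR})(\mathbf{V_2},\mathbf{V_2})] = -2b(1-a-k-m)$, which is nonzero precisely under the stated hypothesis $m\neq 1-a-k$.

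Having verified $\mathbf{W_2}^T Q_c = 0$, $\mathbf{W_2}^T[DQ_c\,\mathbf{V_2}]\neq 0$, and $\mathbf{W_2}^T[D^2Q(\mathbf{V_2},\mathbf{V_2})]\neq 0$ at $(E_2;c_{TR}=1)$, Sotomayor's theorem \cite{Perko13} yields a transcritical bifurcation of system \eqref{2} around $E_2$ at $c_{TR}=1$. I expect no serious obstacle here: the argument is a direct transcription of the $E_1$ case, and the only mild subtlety is bookkeeping the correct scaling of the null vectors and carefully evaluating the second-order partials of $G$ (which drop out anyway since $\mathbf{W_2}=(1,0)^T$), together with noting the role of the hypothesis $u(E)<0$ in ensuring that $E_2$ is the equilibrium actually participating in the exchange of stability (as opposed to a degenerate higher-codimension scenario). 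The bifurcation diagram in Fig.~6 then illustrates the exchange of stability between $E_2$ and the interior equilibrium $E_{2*}$ as $c$ passes through $1$.
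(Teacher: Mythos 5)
Your proposal is correct and follows essentially the same route as the paper's proof: verify the three Sotomayor transversality conditions at $E_2=(0,1)$ with $c_{TR}=1$, using the left null vector $(1,0)^T$ so that only the first components matter. Your right null vector $(1,-(a+k+m))^T$ is just a rescaling of the paper's $(-\tfrac{1}{a+k+m},1)^T$, and your quantities $-b$ and $-2b(1-a-k-m)$ agree with the paper's $\tfrac{b}{a+k+m}$ and $\tfrac{2b(a+k+m-1)}{(a+k+m)^2}$ up to that nonzero scaling, yielding the same nondegeneracy condition $m\neq 1-a-k$.
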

\begin{figure}[h]
\centering
\subfigcapskip=-25pt
\subfigure[$0<c<c_{TR}$]
{\scalebox{0.45}[0.45]{
\includegraphics{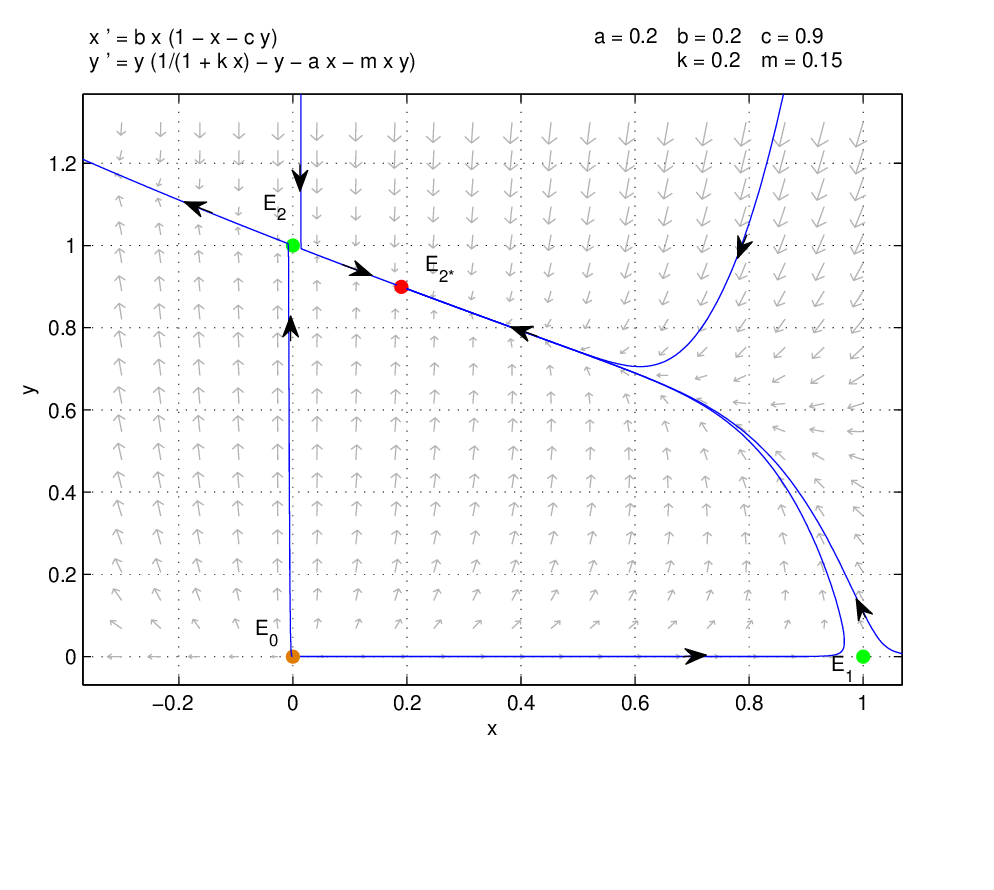}}}
\subfigure[ $c=c_{TR}$]
{\scalebox{0.45}[0.45]{
\includegraphics{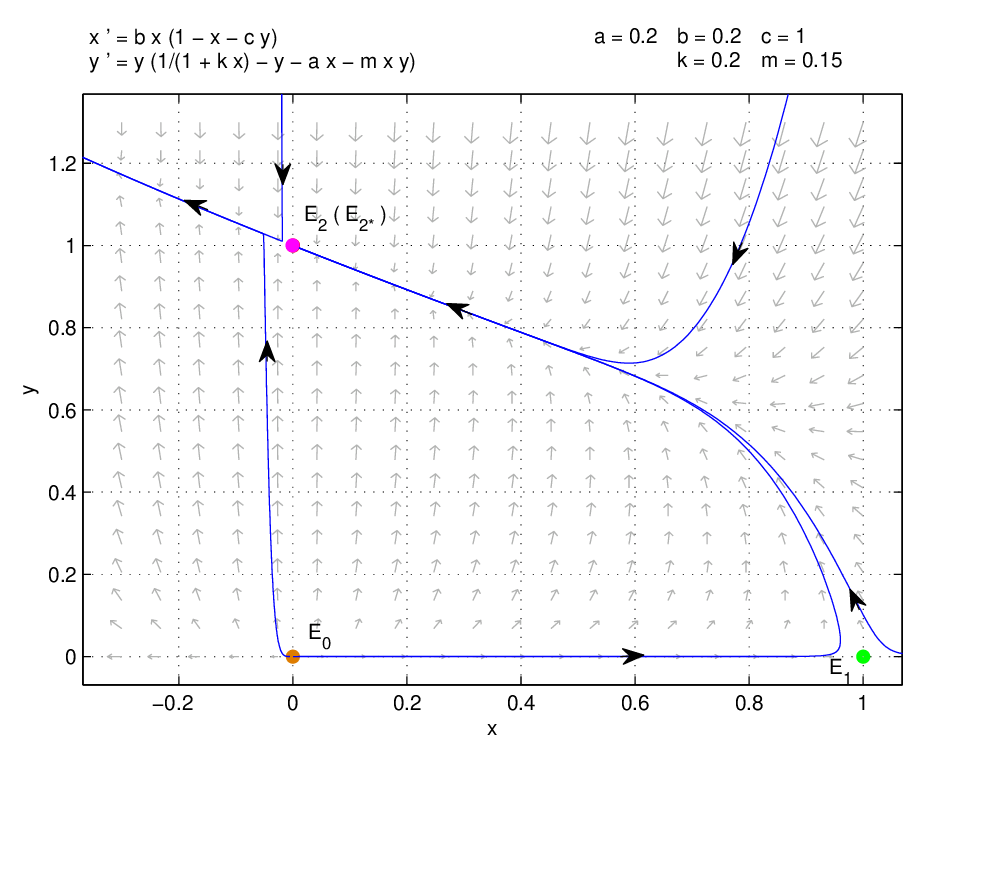}}}
\subfigure[ $c>c_{TR}$]
{\scalebox{0.45}[0.45]{
\includegraphics{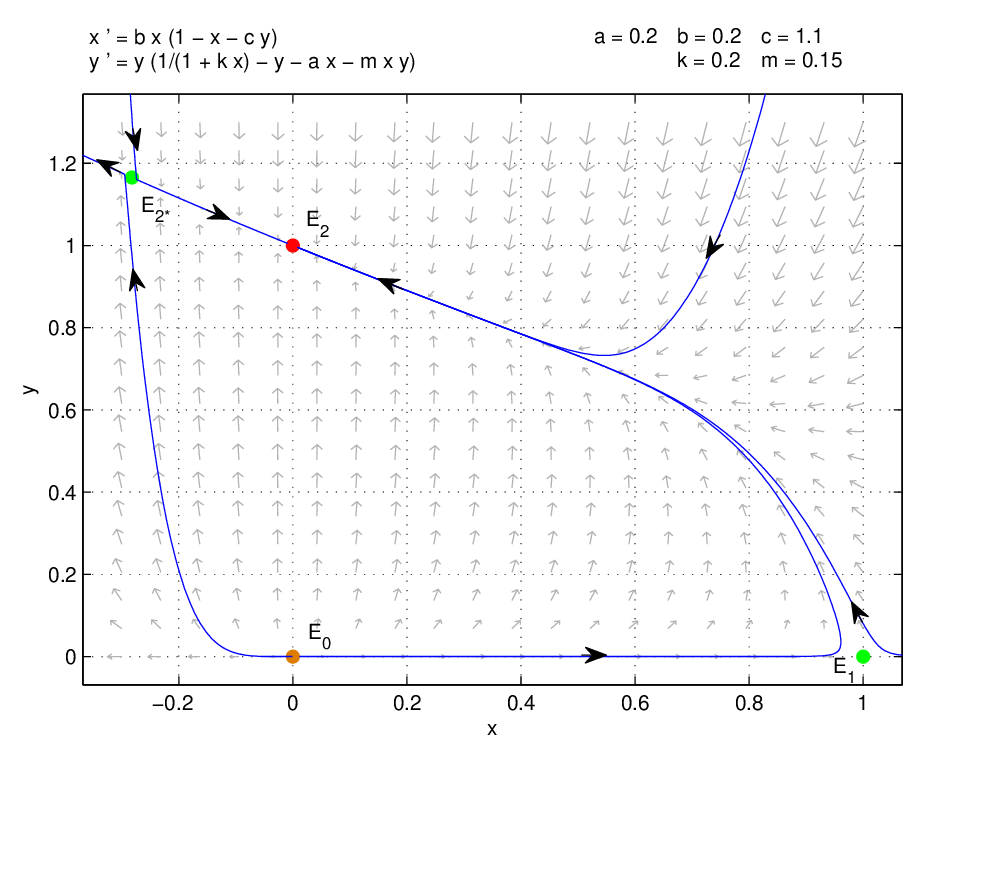}}}
\caption{Red, green, pink, and orange points indicate stable node, saddle, saddle-node, and unstable node (source), respectively. System \eqref{2} undergoes a transcritical bifurcation around $E_2$.}
\end{figure}
\begin{proof}
From Theorem 4, we know that the eigenvalues of $J(E_1)$ are $\lambda_{1}^{E_2}=-1$, $\lambda_{2}^{E_2}=0$ if $c=c_{TR}=1$. Now, let $\mathbf{V_2} = (v_3, v_4)^T$ and $\mathbf{W_2} = (w_3, w_4)^T$ be the eigenvectors of $J(E_2)$ and $J^T(E_2)$ corresponding to $\lambda_{2}^{E_2}=0$, respectively. By calculating, we obtain
\begin{equation}
\mathbf{V_1}=\begin{bmatrix}
v_3\\v_4

\end{bmatrix} =\begin{bmatrix}
-\f{1}{a+k+m} \vspace{2ex}\\1

\end{bmatrix},\mathbf{W_1}=\begin{bmatrix}
w_3\\w_4

\end{bmatrix} =\begin{bmatrix}
1 \\0
\end{bmatrix}.
\end{equation}
Furthermore,
$$
Q_c(E_2;c_{TR})=\begin{bmatrix}
\displaystyle\frac{\partial F}{\partial c} \vspace{2ex}\\\displaystyle\frac{\partial G}{\partial c}

\end{bmatrix}=\begin{bmatrix}
0\\0

\end{bmatrix},
$$
$$
\left. DQ_c(E_2;c_{TR})\mathbf{V_2}=\left[\begin{array}{cc}
-by & -bx
\\
0 & 0
\end{array}\right] \right |_{(E_2;c_{TR})}\begin{bmatrix}
-\f{1}{a+k+m}\vspace{2ex} \\1

\end{bmatrix}=\begin{bmatrix}
\f{b}{a+k+m}\vspace{2ex} \\0
\end{bmatrix},
$$
$$
\left.D^2Q(E_2;c_{TR})(\mathbf{V_2}, \mathbf{V_2})=\begin{bmatrix}
\displaystyle\frac{\partial^2F}{\partial x^2}v_3^2+ 2\displaystyle\frac{\partial^2F}{\partial x\partial y}v_3v_4+ \displaystyle\frac{\partial^2F}{\partial y^2}v^2_4\vspace{2ex}\\
\displaystyle\frac{\partial^2G}{\partial x^2}v_3^2+ 2\displaystyle\frac{\partial^2G}{\partial x\partial y}v_3v_4+ \displaystyle\frac{\partial^2G}{\partial y^2}v^2_4
\end{bmatrix}\right|_{(E_2;c_{TR})}=\begin{bmatrix}
\f{2 b \left(-1+a +k +m \right)}{\left(a +k +m \right)^{2}} \vspace{2ex}\\ \f{2 m^{2}+\left(2 a +2 k \right) m +2 k^{2}}{\left(a +k +m \right)^{2}}
\end{bmatrix}.
$$
Thus, we have
$$
\mathbf{W_2}^TQ_c(E_2;c_{TR})=0,
$$
$$
\mathbf{W_2}^T\left[ DQ_c(E_2;c_{TR})\mathbf{V_2} \right]=\f{b}{a+k+m}\ne0,
$$
$$
\mathbf{W_2}^T\left[ D^2Q(E_2;c_{TR})(\mathbf{V_2}, \mathbf{V_2}) \right]=\f{2 b \left(-1+a +k +m \right)}{\left(a +k +m \right)^{2}}\ne0.
$$
Based on {\it Sotomayor's Theorem} \cite{Perko13} , all the transversality conditions for system \eqref{2} to experience a transcritical bifurcation are satisfied, so system \eqref{2} undergoes a transcritical bifurcation around $E_2$ at the bifurcation parameter threshold $c_{TR} = 1$.
\end{proof}

\begin{figure}[h]
\centering
\subfigcapskip=-25pt
\subfigure[$0<c<c_{TR}$]
{\scalebox{0.45}[0.45]{
\includegraphics{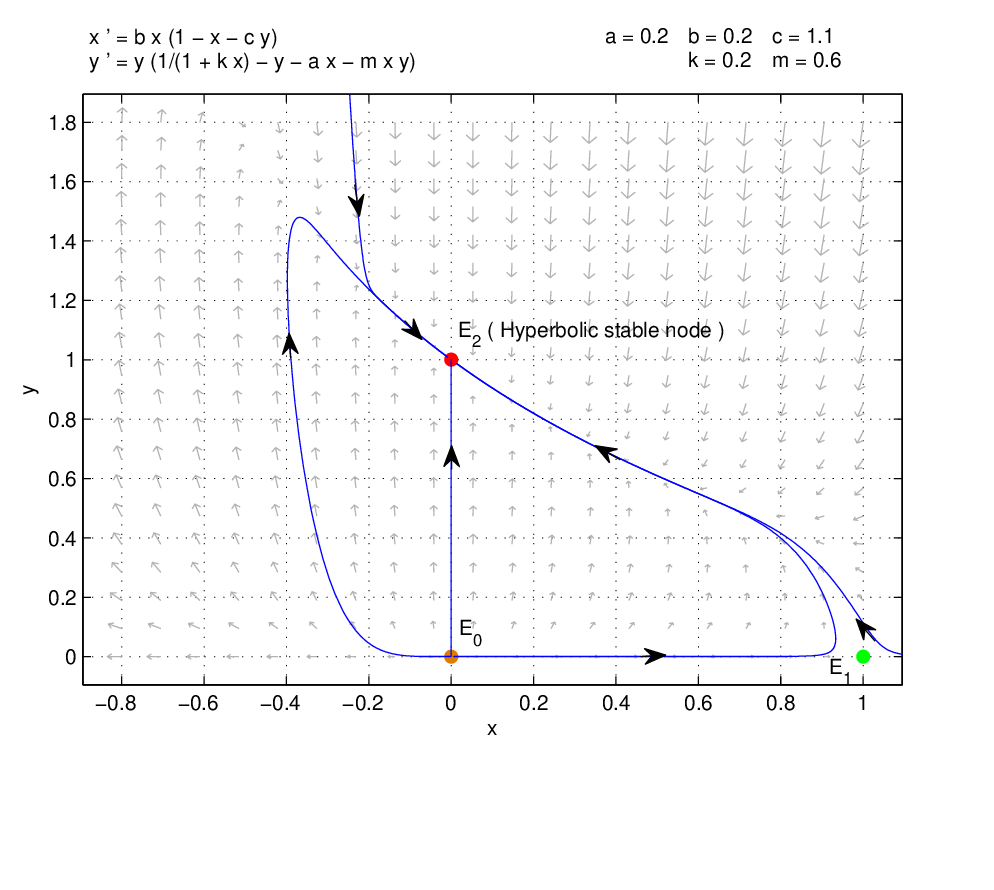}}}
\subfigure[ $c=c_{TR}$]
{\scalebox{0.45}[0.45]{
\includegraphics{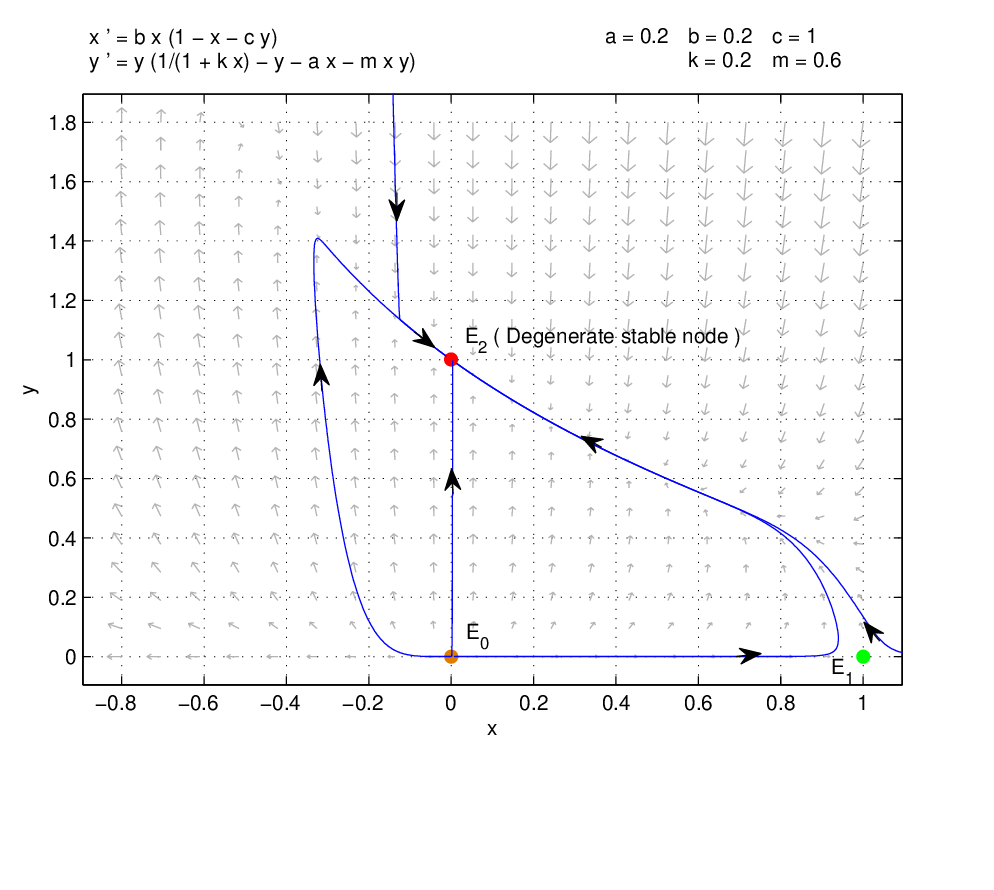}}}
\subfigure[ $c>c_{TR}$]
{\scalebox{0.45}[0.45]{
\includegraphics{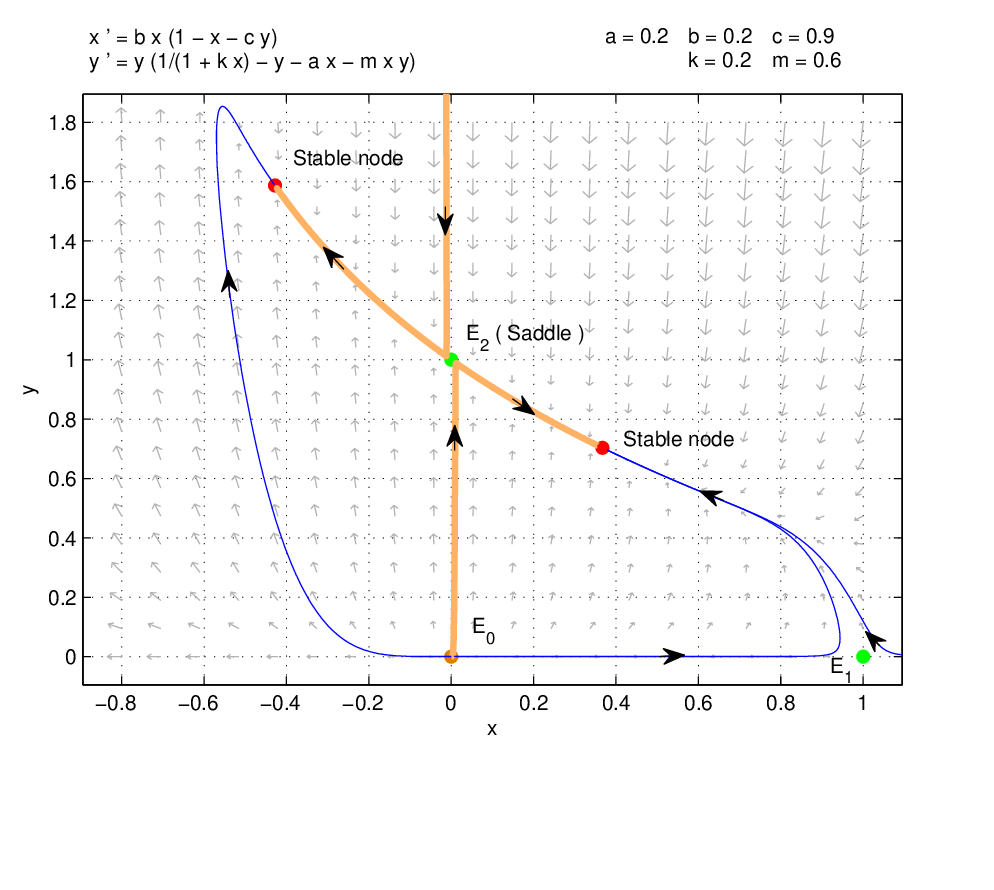}}}
\caption{System \eqref{2} undergoes a pitchforkbifurcation around $E_2$ when $c=c_{TR}$ and $m=1-a-k$.}
\end{figure}

\subsection{Pitchfork bifurcation}

According to Theorem 9, the third transversality condition about transcritical bifurcation on $E_2$ will equal $0$ when $m=1-a-k$, i.e., $m=m^{**}$. Also by Theorem 4, $E_2$ is a degenerate stable node when $m=m^{**}$. We select $a=0.2$, $b=0.2$, $k=0.2$, $m=0.6$, $c=1 \pm 0.1$. By numerical simulation, we find that the number of equilibria near $E_2$ undergoes a $1-1-3$ transformation. From this we conclude that system \eqref{2} will experience a pitchfork bifurcation around $E_2$ when $c=c_{TR}$ and $m=m^{**}$ (Fig. 7).

\subsection{Saddle-node bifurcation}
Under the condition $m>m_1$, $c>1$, and $0<k<k^*$, we note that when $u(E)>0$, $u(E)=0$, and $u(E)<0$, system \eqref{2} has 0, 1, and 2 positive equilibria, respectively. Therefore we consider system \eqref{2} undergoing a saddle-node bifurcation around the positive equilibrium point $E_{3*}$. We selected the toxic release rate $m$ as the bifurcation parameter. By calculating $u(E)=v(E)=0$, we obtain the bifurcation parameter threshold $m=\frac{-E^{2} k^{2}+2 E c k -2 E k +c -1}{E^{2} \left(E^{2} k^{2}+2 E k +1\right)}\triangleq m_{SN}$, and $a=\frac{E^{4} k^{2}-2 E^{3} k^{2}+2 E^{3} k +3 E^{2} c k +E^{2} k^{2}-4 E^{2} k -2 E c k +E^{2}+2 E c +2 E k -2 E -c +1}{c \,E^{2} \left(E^{2} k^{2}+2 E k +1\right)}\triangleq a_{1}$. Next, we use {\it Sotomayor's Theorem} \cite{Perko13}  to verify that the transversality conditions for saddle-node bifurcation are satisfied.
\begin{figure}[h]
\centering
\subfigcapskip=-25pt
\subfigure[$u(E)<0$]
{\scalebox{0.45}[0.45]{
\includegraphics{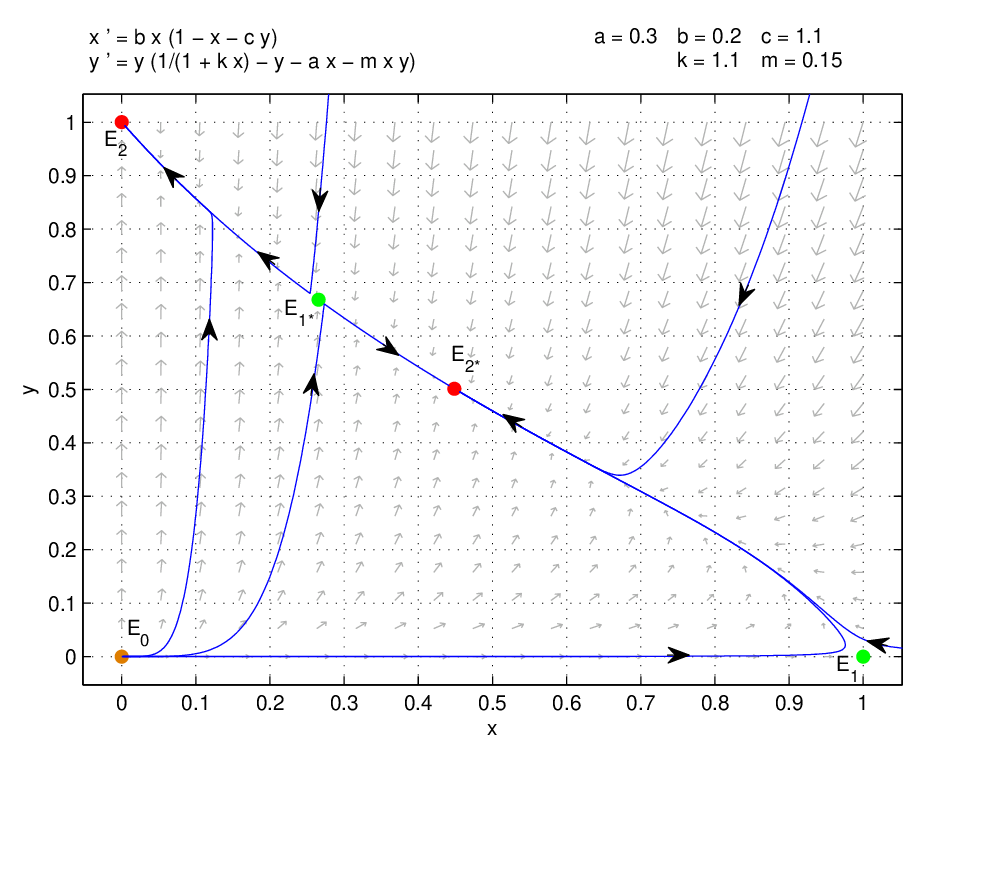}}}
\subfigure[ $u(E)=0$]
{\scalebox{0.45}[0.45]{
\includegraphics{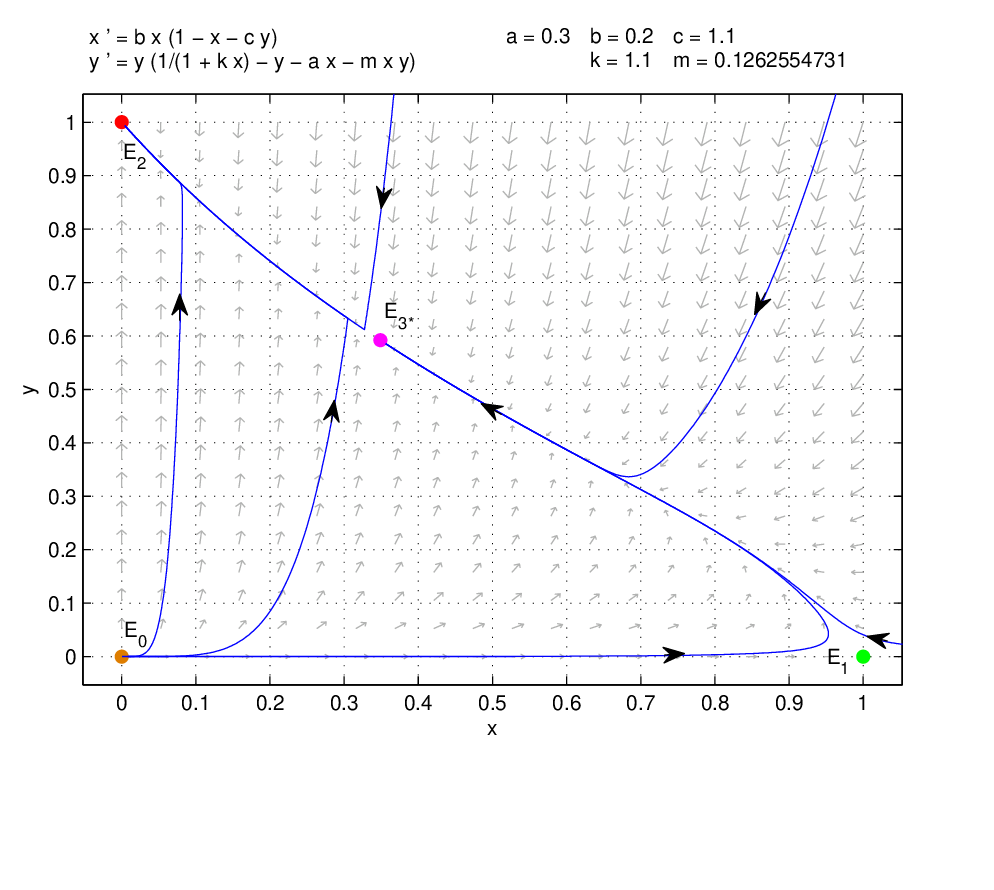}}}
\subfigure[ $u(E)>0$]
{\scalebox{0.45}[0.45]{
\includegraphics{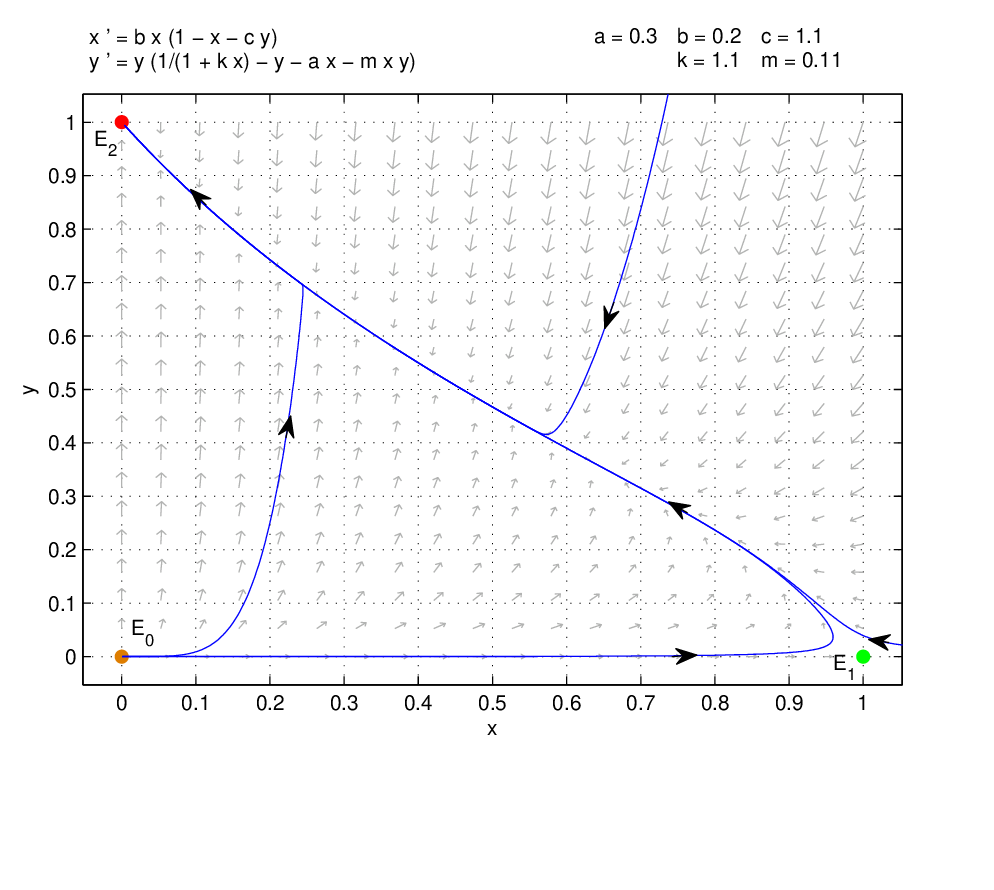}}}
\caption{Red, green, pink, and orange points indicate stable node, saddle, saddle-node, and unstable node (source), respectively. System \eqref{2} undergoes a saddle-node bifurcation around $E_{3*}$. }
\end{figure}
\begin{theorem}
System \eqref{2} undergoes a saddle-node bifurcation around $E_{3*}$ at the bifurcation parameter threshold $m = m_{SN}$ when $m>m_1$, $0<k<k^*$, $c>1$, and $c \ne\frac{E^{3} k^{3}+3 E^{2} k^{2}+3 E k +1}{3 E^{2} k^{2}+3 E k +1}$ (Fig. 8, Fig. 9).
\end{theorem}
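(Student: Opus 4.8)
The plan is to apply \emph{Sotomayor's Theorem} \cite{Perko13} with the toxic release rate $m$ as the bifurcation parameter, exactly as in the two transcritical proofs above, but now verifying the saddle--node transversality conditions. Throughout we work at the threshold, i.e. with $a=a_{1}$ and $m=m_{SN}$, so that $u(E)=v(E)=0$ holds at the equilibrium $E_{3*}=\big(E,\tfrac{1-E}{c}\big)$, $E=x_{v2}$; then \eqref{19} gives $\mathrm{det}(J(E_{3*}))=0$, so $\lambda=0$ is an eigenvalue. First I would record that this eigenvalue is \emph{simple}: evaluating \eqref{8} at $E_{3*}$ and using $x+cy=1$ there gives $B_{1}=-bE$, while $g(E_{3*})=0$ gives $B_{4}=-\tfrac{1-E}{c}(1+mE)$, both strictly negative since $0<E<1$ (Theorems~1 and~2); hence $\mathrm{tr}(J(E_{3*}))=B_{1}+B_{4}\neq 0$ and $J(E_{3*})$ is not nilpotent. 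A short calculation then produces the null vectors $\mathbf{V}$ of $J(E_{3*})$ and $\mathbf{W}$ of $J^{T}(E_{3*})$ associated with $\lambda=0$,
$$
\mathbf{V}=\begin{bmatrix} c \\ -1 \end{bmatrix}, \qquad \mathbf{W}=\begin{bmatrix} B_{3} \\ -B_{1} \end{bmatrix}=\begin{bmatrix} B_{3} \\ bE \end{bmatrix},
$$
with $B_{3}=B_{3}(E_{3*})<0$; in particular $w_{2}=bE\neq 0$.

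Next I would check the two transversality conditions. Since $m$ enters \eqref{2} only through the term $-mxy$ in $G$, one has $Q_{m}=(0,\,-xy^{2})^{T}$, hence
$$
\mathbf{W}^{T}Q_{m}(E_{3*};m_{SN})=-\,bE\cdot E\Big(\frac{1-E}{c}\Big)^{2}\neq 0,
$$
which is the first condition. For the second, the key observation is that $\mathbf{V}=(c,-1)^{T}$ is tangent to the straight $f$-nullcline $\{x+cy=1\}$, on which $F\equiv 0$ (because $1-x-cy=0$ there); consequently the $F$-component of $D^{2}Q(\mathbf{V},\mathbf{V})$ vanishes identically, so that
$$
\mathbf{W}^{T}\big[D^{2}Q(E_{3*};m_{SN})(\mathbf{V},\mathbf{V})\big]=bE\,\Big(c^{2}G_{xx}-2cG_{xy}+G_{yy}\Big)\Big|_{E_{3*}}=bE\,\frac{d^{2}}{ds^{2}}\Big[G\big(E+cs,\tfrac{1-E}{c}-s\big)\Big]\Big|_{s=0}.
$$
Along that nullcline $y=\tfrac{1-x}{c}$, and a direct substitution into \eqref{2} gives $G\big(x,\tfrac{1-x}{c}\big)=\dfrac{(1-x)\,u(x)}{c^{2}(1+kx)}$; differentiating twice in $x$ and using $u(E)=u'(E)=0$ collapses the above to
$$
\mathbf{W}^{T}\big[D^{2}Q(E_{3*};m_{SN})(\mathbf{V},\mathbf{V})\big]=\frac{bE(1-E)}{1+kE}\,u''(E).
$$
Thus the second condition is precisely $u''(E)\neq 0$. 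Writing the threshold cubic as $u(x)=km\,(x-E)^{2}(x-\rho)$ and matching the constant term $u(0)=A_{4}=c-1$ yields $\rho=\tfrac{1-c}{kmE^{2}}$, so $u''(E)=2km(E-\rho)$; substituting $m=m_{SN}$ one finds $u''(E)$ is a positive multiple of $\big(3E^{2}k^{2}+3Ek+1\big)c-(Ek+1)^{3}$. Hence the non-degeneracy $u''(E)\neq 0$ is exactly the stated hypothesis $c\neq\frac{E^{3}k^{3}+3E^{2}k^{2}+3Ek+1}{3E^{2}k^{2}+3Ek+1}$ (equivalently, the discriminant $\Delta$ of \eqref{17} is nonzero, i.e. $E$ is a genuine double rather than a triple root of $u$). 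With both transversality conditions in force, Sotomayor's Theorem yields a saddle--node bifurcation of \eqref{2} at $E_{3*}$ as $m$ passes through $m_{SN}$, consistent with the $0$--$1$--$2$ count of positive equilibria noted before the statement (Figs.~8 and~9).

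I expect the main obstacle to be the second transversality computation. The simplicity of the zero eigenvalue, the null vectors, and the first condition are routine bookkeeping, and the line-nullcline remark reduces the Hessian term cleanly to $u''(E)$; the remaining work is purely algebraic, namely to confirm that $u''(E)$ simplifies under $a=a_{1}$ and $m=m_{SN}$ to the single linear-in-$c$ factor above. This requires carrying the bulky rational expressions for $m_{SN}$ and $a_{1}$ and repeatedly using $u(E)=v(E)=0$ together with the identities $(Ek+1)^{3}=E^{3}k^{3}+3E^{2}k^{2}+3Ek+1$ and $3E^{2}k^{2}+3Ek+1=(Ek+1)^{3}-E^{3}k^{3}$. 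I would carry out this reduction with a computer algebra system and report only the factored result, as was done for the coefficients in \eqref{21} and Appendix~A.
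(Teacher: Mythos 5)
Your proposal is correct and follows the same skeleton as the paper's proof: Sotomayor's Theorem with $m$ as the bifurcation parameter, the same null eigenvectors up to scaling (your $\mathbf{V}=(c,-1)^T$ and $\mathbf{W}=(B_3,bE)^T$ are scalar multiples of the paper's $\mathbf{V_3}$ and $\mathbf{W_3}$), and the same first transversality quantity $\mathbf{W}^TQ_m\propto -E(1-E)^2/c^2\neq 0$. Where you genuinely diverge is in the second transversality condition. The paper evaluates $\mathbf{W_3}^T\bigl[D^2Q(\mathbf{V_3},\mathbf{V_3})\bigr]$ by direct (CAS-style) computation and simply reports the resulting rational expression, whose numerator $E^3k^3-3k^2(c-1)E^2-3k(c-1)E-c+1=(Ek+1)^3-c\,(3E^2k^2+3Ek+1)$ vanishes exactly at the excluded value of $c$. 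You instead observe that $\mathbf{V}$ is tangent to the linear nullcline $x+cy=1$, so the $F$-component of the Hessian term vanishes identically, and then use the identity $G\bigl(x,\tfrac{1-x}{c}\bigr)=\tfrac{(1-x)u(x)}{c^2(1+kx)}$ (which I verified against the coefficients $A_1,\dots,A_4$ of \eqref{16}) together with $u(E)=u'(E)=0$ to collapse the whole expression to $\tfrac{bE(1-E)}{1+kE}\,u''(E)$; writing $u=km(x-E)^2(x-\rho)$ then identifies the non-degeneracy condition as $u''(E)=\tfrac{2}{E^2(Ek+1)^2}\bigl[c(3E^2k^2+3Ek+1)-(Ek+1)^3\bigr]\neq 0$, matching the paper's excluded value of $c$. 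Your route buys a conceptual reading of the hypothesis (the bifurcation is non-degenerate precisely when $E$ is a double rather than a triple root of the cubic $u$) and replaces the heavy algebra with a two-line product-rule argument; you also explicitly verify simplicity of the zero eigenvalue via $\mathrm{tr}\,J(E_{3*})=-bE-\tfrac{1-E}{c}(1+mE)<0$, a point the paper leaves implicit. No gaps.
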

\begin{figure}[h]
\centering
\scalebox{0.6}[0.6]{
\includegraphics{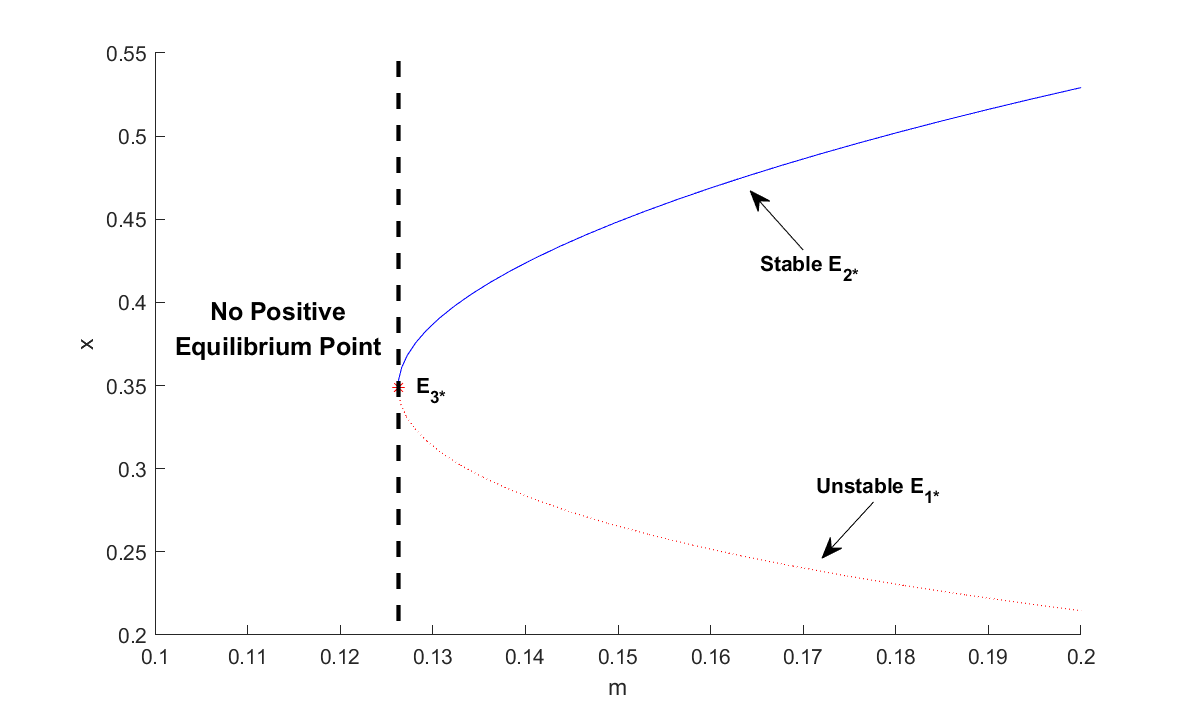}}
\caption{Select $a=0.3$, $b=0.2$, $c=1.1$, $k=1.1$, $m=m_{SN}=0.1262554731$, system \eqref{2} undergoes a saddle-node bifurcation around $E_{3*}$.}
\label{sn}
\end{figure}
\begin{proof}
According to \eqref{8}, we know that the Jacobi matrix of the positive equilibrium point $E_{3*}$ can be expressed in the following form by substituting $m=m_{SN}$, $a=a_1$, and one of the eigenvalues is $\lambda = 0$.
$$
J(E_{3*})=\left[\begin{array}{cc}
-bE &-bEc
\vspace{2ex}\\
\frac{\left(E^{3} k^{2}+\left(-k^{2}+2 k \right) E^{2}+\left(1+\left(2 c -2\right) k \right) E +c -1\right) \left(-1+E \right)}{E \left(E k +1\right)^{2} c^{2}} & \frac{\left(E^{3} k^{2}+\left(-k^{2}+2 k \right) E^{2}+\left(1+\left(2 c -2\right) k \right) E +c -1\right) \left(-1+E \right)}{c E \left(E k +1\right)^{2}}
\end{array}\right].
$$
Now, let $\mathbf{V_3} = (v_5, v_6)^T$ and $\mathbf{W_3} = (w_5, w_6)^T$ be the eigenvectors of $J(E_{3*})$ and $J^T(E_{3*})$ corresponding to $\lambda=0$, respectively. By calculating, we obtain
\begin{equation}
\mathbf{V_3}=\begin{bmatrix}
v_5\\v_6

\end{bmatrix} =\begin{bmatrix}
-c \\1

\end{bmatrix},\mathbf{W_3}=\begin{bmatrix}
w_5\\w_6

\end{bmatrix} =\begin{bmatrix}
\frac{\left(E^{3} k^{2}+\left(-k^{2}+2 k \right) E^{2}+\left(1+\left(2 c -2\right) k \right) E +c -1\right) \left(-1+E \right)}{E^{2} b \,c^{2} \left(E k +1\right)^{2}}\vspace{2ex} \\1
\end{bmatrix}.
\label{23}
\end{equation}
Furthermore,
$$
Q_m(E_{3*};m_{SN})=\begin{bmatrix}
\displaystyle\frac{\partial F}{\partial m} \vspace{2ex}\\\displaystyle\frac{\partial G}{\partial m}

\end{bmatrix}=\begin{bmatrix}
0\vspace{2ex}\\-\f{\left(1-E \right)^{2} E}{c^{2}}
\end{bmatrix},
$$
$$
D^2Q(E_{3*};m_{SN})(\mathbf{V_3}, \mathbf{V_3})=\begin{bmatrix}
\displaystyle\frac{\partial^2F}{\partial x^2}v_5^2+ 2\displaystyle\frac{\partial^2F}{\partial x\partial y}v_5v_6+ \displaystyle\frac{\partial^2F}{\partial y^2}v^2_6\vspace{2ex}\\
\displaystyle\frac{\partial^2G}{\partial x^2}v_5^2+ 2\displaystyle\frac{\partial^2G}{\partial x\partial y}v_5v_6+ \displaystyle\frac{\partial^2G}{\partial y^2}v^2_6
\end{bmatrix}=\begin{bmatrix}
0 \vspace{2ex}\\\frac{2 \left(-1+E \right) \left(E^{3} k^{3}-3 k^{2} \left(c -1\right) E^{2}-3 k \left(c -1\right) E -c +1\right)}{\left(E k +1\right)^{3} E^{2}}
\end{bmatrix},
$$
Thus, we have
$$
\mathbf{W_3}^TQ_m(E_{3*};m_{SN})=-\f{\left(1-E \right)^{2} E}{c^{2}}\ne0,
$$
$$
\mathbf{W_3}^T\left[ D^2Q(E_{3*};q_{SN})(\mathbf{V_3}, \mathbf{V_3}) \right]=\frac{2 \left(-1+E \right) \left(E^{3} k^{3}-3 k^{2} \left(c -1\right) E^{2}-3 k \left(c -1\right) E -c +1\right)}{\left(E k +1\right)^{3} E^{2}}\ne0.
$$
According to {\it Sotomayor's Theorem} \cite{Perko13} , all the transversality conditions for system \eqref{2} to experience a saddle-node bifurcation are satisfied, so system \eqref{2} undergoes a saddle-node bifurcation around $E_{3*}$ at the bifurcation parameter threshold $m_{SN} = \frac{2 \left(-1+E \right) \left(E^{3} k^{3}-3 k^{2} \left(c -1\right) E^{2}-3 k \left(c -1\right) E -c +1\right)}{\left(E k +1\right)^{3} E^{2}}$.
\end{proof}

\begin{remark}
This section discusses all possible bifurcations of system \eqref{2}. Through the above analysis, we find that by varying the value of the fear effect $k$ for non-toxic species or the interspecific competition rate $c$ for toxic species, both cause system \eqref{2} to undergo a transcritical bifurcation on the boundary. When a particular value is taken for the toxin release rate $m$, this may also cause system \eqref{2} to experience a pitchfork bifurcation on the boundary. In addition, parameter $m$ will also lead system \eqref{2} to undergo a saddle-node bifurcation in the first quadrant. Thus we can determine that the fear effect and toxic release rate can cause complex dynamics in the classical Lotka-Volterra competition model.
\end{remark}

\section{Effect of Toxic Release Rate and Fear}
Through the studies in Section 6, we learned that the toxic release rate $m$ and the fear effect $k$ produce rich bifurcations in system \eqref{2}. Then returning to the biological significance, how exactly do $m$ and $k$ affect the species? Observing Table 1, we note that whenever $k$ falls in the interval $(0,k^*)$, there must be a stable positive equilibrium point $E_{2*}$ in system \eqref{2}. Therefore we conclude that, regardless of the value of the toxic release rate, the only factor that can affect the survival of non-toxic species is the competition fear. Next, we use numerical simulation to verify this through the time-course plots of solutions.
\begin{example}
For $m>m_1$ and $0<c<1$. We select $a=0.8$, $b=0.5$, $c=0.5$, $k_1=0.2$, $k_2=0.4$, $m=0.5$. Through numerical simulations, we obtain time-course plots of solutions (Fig. 10). When $m>m_1$, the value of fear effect $k$ leads to the extinction of the non-toxic species $(x)$ if it satisfies $k>k^*$, and is not in contrast.
\begin{figure}[h]
\centering
\subfigure[$0<k_1<k^*$]
{\scalebox{0.45}[0.45]{
\includegraphics{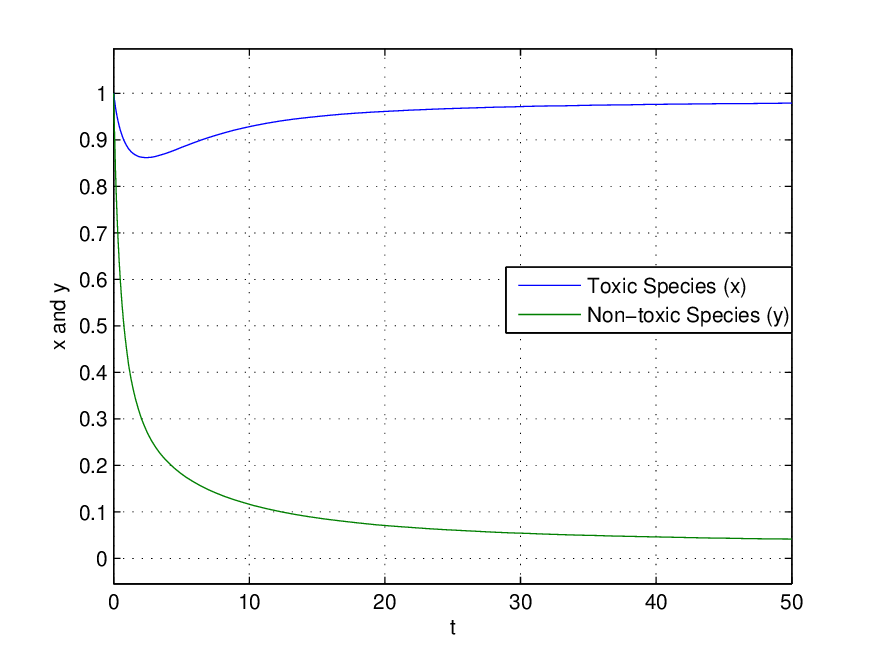}}}
\subfigure[ $k_2>k^*$]
{\scalebox{0.45}[0.45]{
\includegraphics{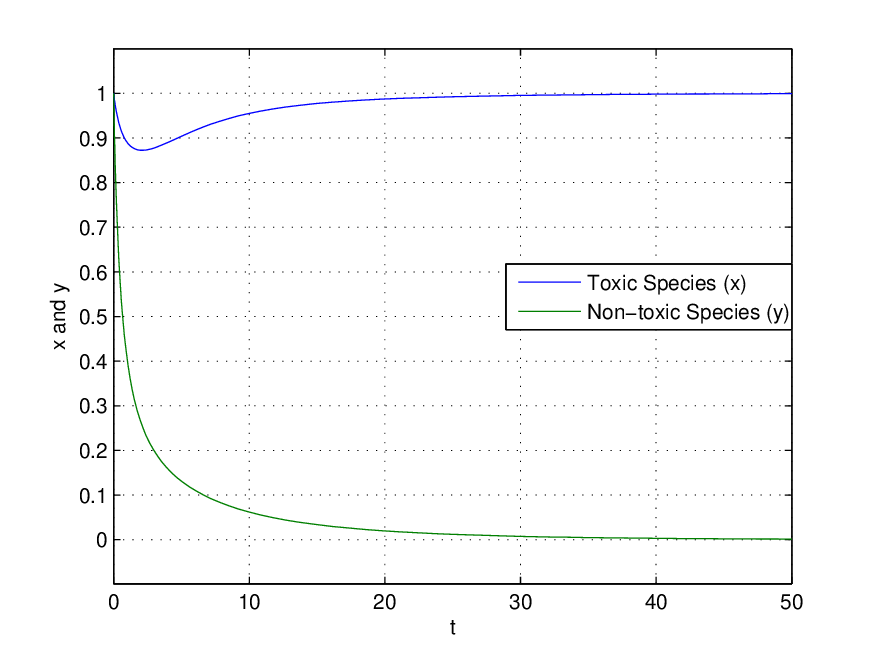}}}
\caption{$a=0.8$, $b=0.5$, $c=0.5$, $k_1=0.2$, $k_2=0.4$, $m=0.5$. (a) Non-toxic species survives when $0<k_1<k^*$. (b) Non-toxic species become extinct when $k_2>k^*$. }
\end{figure}
\end{example}
\begin{example}
For $m>m_1$ and $c>1$. We select $a=0.3$, $b=0.5$, $c=1.1$, $k_1=1.1$, $k_2=4$, $m=0.15$. Through numerical simulations, we obtain time-course plots of solutions (Fig. 11). We note that although toxic species are subject to an interspecific competition rate $c > 1$, they still survive by releasing toxins and by causing fear in competitor.
\begin{figure}[h]
\centering
\subfigure[$0<k_1<k^*$]
{\scalebox{0.45}[0.45]{
\includegraphics{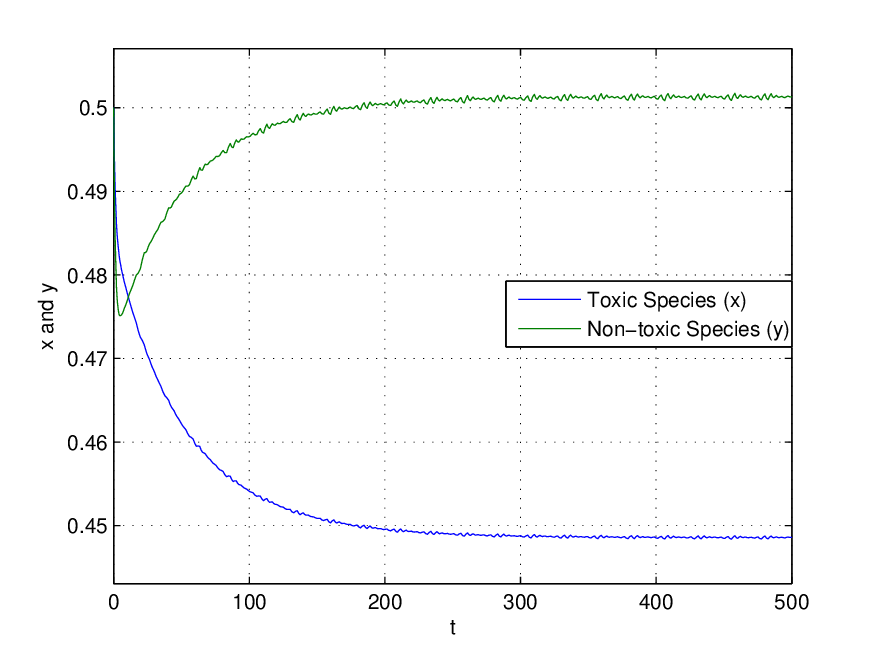}}}
\subfigure[ $k_2>k^*$]
{\scalebox{0.45}[0.45]{
\includegraphics{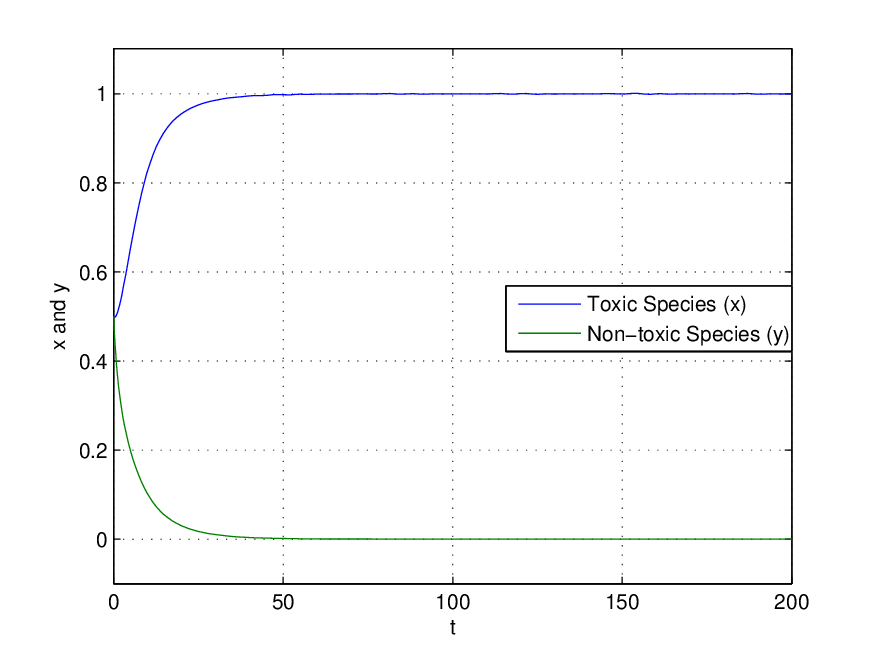}}}
\caption{$a=0.3$, $b=0.5$, $c=1.1$, $k_1=1.1$, $k_2=4$, $m=0.15$.  (a) Non-toxic species survives when $0<k_1<k^*$. (b) Non-toxic species become extinct when $k_1>k^*$. }
\end{figure}
\end{example}
\begin{example}
For $0<m<m_1$. We select $a=0.8$, $b=0.5$, $c=0.5$, $k_1=0.2$, $k_2=0.3$, $m=0.1$. Through numerical simulations, we obtain time-course plots of solutions (Fig. 12). 
\begin{figure}[h]
\centering
\subfigure[$0<k_1<k^*$]
{\scalebox{0.45}[0.45]{
\includegraphics{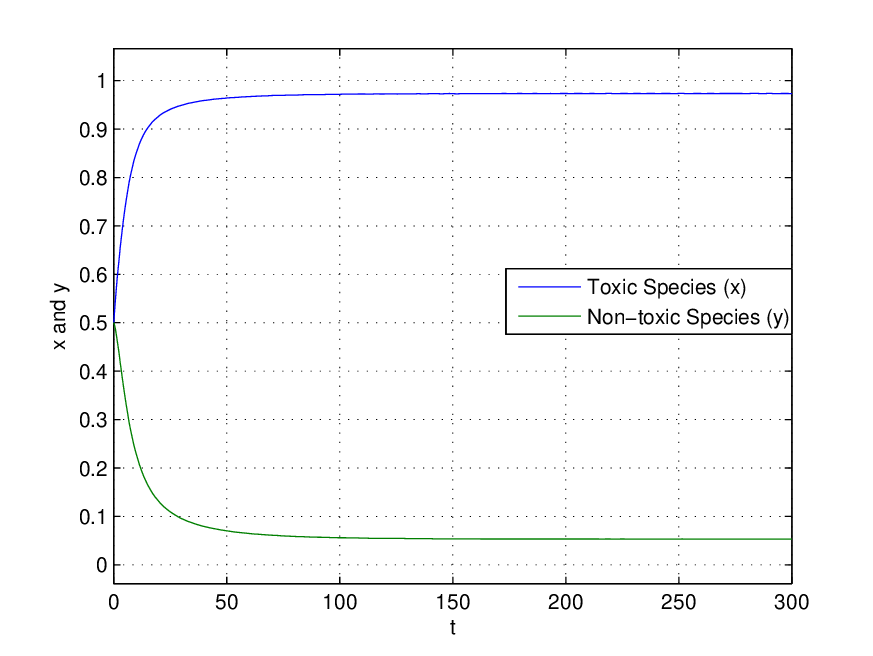}}}
\subfigure[ $k_2>k^*$]
{\scalebox{0.45}[0.45]{
\includegraphics{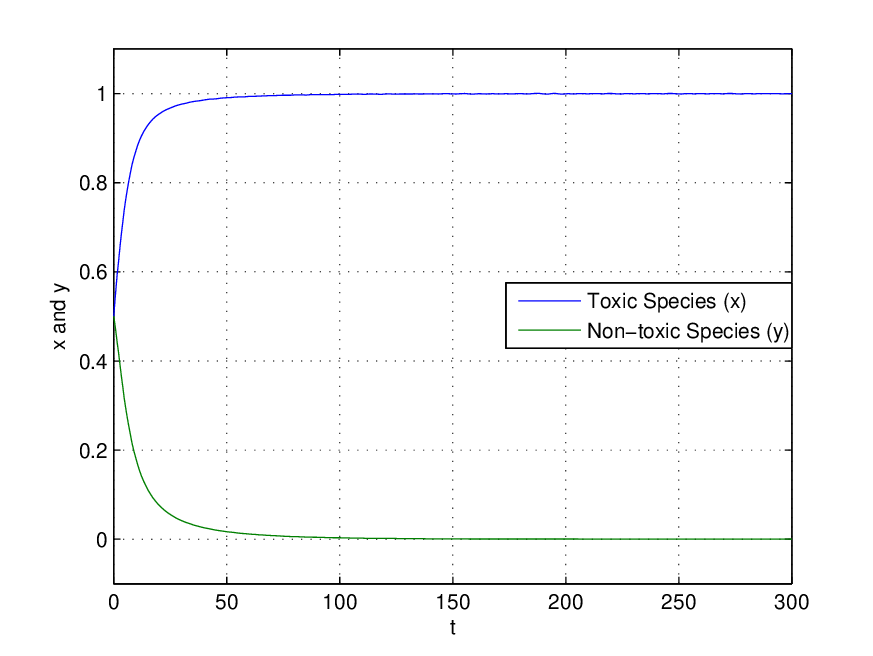}}}
\caption{$a=0.8$, $b=0.5$, $c=0.5$, $k_1=0.2$, $k_2=0.3$, $m=0.1$. (a) Non-toxic species survive when $0<k_1<k^*$. (b) Non-toxic species become extinct when $k_2>k^*$. }
\end{figure}
\end{example}
\begin{remark}
Comparing Example 7.1-3, we find that non-toxic species can survive regardless of the level of toxic release rate. When the fear effect on the non-toxic species is too large, it leads to extinction. Numerical simulation effectively verifies the correctness of our above analysis.
\end{remark}

\section{The PDE Case}
We will now cover several preliminary concepts that will pave the way for proving the global existence of solutions to \eqref{PDEmodel}. To achieve this objective, it is sufficient to establish a uniform estimate on the $\mathbb{L}^{p}$ norms of the right-hand side of \eqref{PDEmodel}, where $p$ exceeds $\frac{n}{2}$. By doing so, we can then apply classical theory, as outlined in \cite{henry}, to guarantee global existence.

In this context, the standard norms in the spaces $\mathbb{L}^{p}(\Omega)$, $\mathbb{L}^{\infty}(\Omega)$, and $\mathbb{C}(\overline{\Omega})$ are denoted as follows:

\begin{equation}
	\label{(2.2)}
	\left\| u\right\| _{p}^{p} = 
	\int_{\Omega }\left| u(x)\right|^{p}dx, \ \left\| u\right\| _{\infty }\text{=}\underset{x\in \Omega }{ess \sup}\left|
	u(x)\right| .  
\end{equation}

We employ well-established techniques, as described in \cite{Morgan89}, to accomplish this. Initially, let's revisit classical results that ensure the non-negativity of solutions and establish both local and global existence, as outlined in \cite{P10} and \cite{Morgan89}:

\begin{lemma}\label{lem:class1}
	Let us consider the following $m\times m$ - reaction diffusion system: for all $i=1,...,m,$ 
	\begin{equation}
		\label{eq:class1}
		\partial_t u_i-d_i\Delta u_i=f_i(u_1,...,u_m)~in~ \mathbb{R}_+\times \Omega,~ \partial_\nu u_i=0~ \text{on}~ \partial \Omega, u_i(0)=u_{i0},
	\end{equation}
	where $d_i \in(0,+\infty)$, $f=(f_1,...,f_m):\mathbb{R}^m \rightarrow \mathbb{R}^m$ is $C^1(\Omega)$ and $u_{i0}\in L^{\infty}(\Omega)$. Then there exists a $T>0$ and a unique classical solution of (\ref{eq:class1}) on $[0,T).$ If $T^*$ denotes the greatest of these $T's$, then 
	\begin{equation*}
		\Bigg[\sup_{t \in [0,T^*),1\leq i\leq m} ||u_i(t)||_{L^{\infty}(\Omega)} < +\infty \Bigg] \implies [T^*=+\infty].
	\end{equation*}
	If the nonlinearity $(f_i)_{1\leq i\leq m}$ is moreover quasi-positive, which means 
	$$\forall i=1,..., m,~~\forall u_1,..., u_m \geq 0,~~f_i(u_1,...,u_{i-1}, 0, u_{i+1}, ..., u_m)\geq 0,$$
	then $$[\forall i=1,..., m, u_{i0}\geq 0]\implies [\forall i=1,...,m,~ \forall t\in [0,T^*), u_i(t)\geq 0].$$
\end{lemma}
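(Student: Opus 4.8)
The plan is to read \eqref{eq:class1} as a semilinear parabolic Cauchy problem and to establish, in turn, local well-posedness, a continuation (blow-up) criterion, and invariance of the non-negative cone under the quasi-positivity hypothesis. This is the route taken in \cite{henry, P10, Morgan89}, and I would follow it closely.

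\textbf{Local existence and uniqueness.} First I would work in the Banach space $X=C(\overline{\Omega})^m$ (equivalently one may use $L^{\infty}(\Omega)^m$). For each $i$ the operator $d_i\Delta$ with homogeneous Neumann boundary conditions generates a strongly continuous analytic semigroup $\{S_i(t)\}_{t\ge 0}$ on $C(\overline{\Omega})$; this is classical. Since $f\in C^1(\mathbb{R}^m)$, the Nemytskii operator it induces is locally Lipschitz on bounded subsets of $X$. One then passes to the mild formulation $u_i(t)=S_i(t)u_{i0}+\int_0^t S_i(t-s)f_i(u(s))\,ds$ and solves it by a contraction mapping argument on $C([0,\tau];X)$, with $\tau>0$ depending only on $\|u_0\|_X$. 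Parabolic smoothing (bootstrapping through fractional power spaces / interior Schauder estimates) upgrades the mild solution to a classical solution of \eqref{eq:class1} on $(0,\tau)$, and uniqueness follows from a Gronwall estimate on the difference of two solutions.

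\textbf{Continuation criterion.} Let $T^{*}$ be the supremum of the existence times. Because the length of the local existence interval can be chosen to depend only on an upper bound for $\|u(t_0)\|_X$, the usual ODE-in-a-Banach-space dichotomy applies: either $T^{*}=+\infty$, or $\limsup_{t\uparrow T^{*}}\|u(t)\|_{L^{\infty}(\Omega)}=+\infty$. Contrapositively, if $\sup_{t\in[0,T^{*}),\,1\le i\le m}\|u_i(t)\|_{L^{\infty}(\Omega)}<+\infty$, the solution can be restarted at times approaching $T^{*}$ with a fixed step, which contradicts maximality unless $T^{*}=+\infty$. This is exactly the stated implication.

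\textbf{Non-negativity, and the main obstacle.} The delicate point is the last assertion, because quasi-positivity only constrains $f_i$ on the face $\{u_i=0\}$ while all $m$ components must be controlled simultaneously, so no scalar maximum principle applies to a single equation in isolation. I would first replace $f_i$ by the truncated nonlinearity $\tilde f_i(u)=f_i(u_1^{+},\dots,u_{i-1}^{+},u_i,u_{i+1}^{+},\dots,u_m^{+})$ with $s^{+}=\max(s,0)$; the truncated system has the same local theory and agrees with the original one wherever all components are non-negative. Fixing a local interval $[0,\tau]$ on which the classical solution $u$ is bounded, I would write, for each $i$, $\tilde f_i(u)=b_i(x,t)\,u_i+h_i(x,t)$, where $b_i(x,t)=\int_0^1\partial_{u_i}f_i(u_1^{+},\dots,\theta u_i,\dots,u_m^{+})\,d\theta$ is bounded on $\Omega\times[0,\tau]$ (by the $C^1$-bound of $f$ on the compact range of $u$) and $h_i(x,t)=f_i(u_1^{+},\dots,0,\dots,u_m^{+})\ge 0$ by quasi-positivity. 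A standard energy estimate for $u_i^{-}:=\max(-u_i,0)$ — multiply the $u_i$-equation by $-u_i^{-}$, integrate over $\Omega$, use the Neumann condition and the favourable sign of the diffusion term, discard the term involving $h_i$ because $-h_i\,u_i^{-}\le 0$, and absorb the $b_iu_i$ contribution — yields $\tfrac{d}{dt}\|u_i^{-}(t)\|_{L^2(\Omega)}^2\le C\,\|u_i^{-}(t)\|_{L^2(\Omega)}^2$; since $u_{i0}\ge 0$ forces $u_i^{-}(0)=0$, Gronwall gives $u_i^{-}\equiv 0$ on $[0,\tau]$, i.e.\ $u_i\ge 0$. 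Then $u_j^{+}=u_j$ for every $j$, the truncation is inactive, and the conclusion propagates interval by interval up to $T^{*}$. (Equivalently, one may invoke an invariant-region argument in the spirit of Chueh--Conley--Smoller.) The simultaneous handling of all components in this last step — rather than any single estimate — is the only genuinely subtle ingredient; everything else is the standard analytic-semigroup package.
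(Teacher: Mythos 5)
The paper does not prove this lemma at all: it is quoted verbatim as a classical result, with the proof deferred to the cited references \cite{P10} and \cite{Morgan89} (and \cite{henry} for the underlying semigroup theory). So there is no in-paper argument to compare against. Your reconstruction is correct and is essentially the standard proof found in those references: local well-posedness by contraction on the mild (Duhamel) formulation in $L^{\infty}$ using the local Lipschitz property of the $C^1$ nonlinearity, the blow-up alternative from the fact that the local existence time depends only on $\|u(t_0)\|_{\infty}$, and positivity via the truncation $\tilde f_i(u)=f_i(u_1^{+},\dots,u_i,\dots,u_m^{+})$ combined with quasi-positivity. Two small points worth making explicit if you were to write this out in full: (i) the identity $\frac{d}{dt}\|u_i^{-}\|_{L^2}^2=2\int_{\Omega}\partial_t u_i\,(-u_i^{-})$ needs the Stampacchia-type justification for differentiating through the set $\{u_i=0\}$ (or, alternatively, one can run the positivity step through the maximum principle for the scalar linear equation $\partial_t u_i-d_i\Delta u_i=b_i u_i+h_i$ with $h_i\ge 0$, which avoids the energy computation entirely); and (ii) after showing the truncated system's solution is non-negative, you should invoke uniqueness for the \emph{original} system to conclude the two solutions coincide, which you implicitly do by saying the truncation is inactive. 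Neither is a gap in substance.
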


\begin{lemma}\label{lem:class2}
	Using the same notations and hypotheses as in Lemma \ref{lem:class1}, suppose moreover that $f$ has at most polynomial growth and that there exists $\mathbf{b}\in \mathbb{R}^m$ and a lower triangular invertible matrix $P$ with nonnegative entries such that  $$\forall r \in [0,+\infty)^m,~~~Pf(r)\leq \Bigg[1+ \sum_{i=1}^{m} r_i \Bigg]\mathbf{b}.$$
	Then, for $u_0 \in L^{\infty}(\Omega, \mathbb{R}_+^m),$ the system (\ref{eq:class1}) has a strong global solution.
\end{lemma}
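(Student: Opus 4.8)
The plan is to reproduce, within the framework already set up, the classical duality-plus-bootstrap argument of Morgan and Pierre \cite{Morgan89, P10}. By the blow-up alternative in Lemma \ref{lem:class1}, it suffices to show that on every interval $[0,T]\subset[0,T^*)$ one has $\sup_{t\in[0,T]}\|u_i(t)\|_{L^\infty(\Omega)}<+\infty$ for each $i$; as already noted, this will in turn follow from the regularizing action of the heat semigroup (see \cite{henry}) once a uniform bound on $\|f_i(u)\|_{L^p((0,T)\times\Omega)}$ is available for some $p>\frac n2$. Since $f$ is quasi-positive and $u_0\ge 0$, Lemma \ref{lem:class1} already yields $u_i(t)\ge 0$ on $[0,T^*)$, so throughout we work with non-negative $u$ and may apply the hypothesis $Pf(r)\le\big(1+\sum_k r_k\big)\mathbf b$ pointwise along the solution.

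\textbf{Step 1: an $L^1$ a priori bound (control of mass).} Integrating the $i$-th line of the system satisfied by $Pu$ over $\Omega$ and using the homogeneous Neumann conditions to annihilate the diffusion terms gives $\frac{d}{dt}\int_\Omega (Pu)_i\,dx=\int_\Omega (Pf(u))_i\,dx\le b_i\big(|\Omega|+\int_\Omega\sum_k u_k\,dx\big)$. Because $P$ is lower triangular and invertible, its diagonal entries $p_{ii}$ are strictly positive, so this is run as an induction on $i$: the first line controls $t\mapsto\int_\Omega u_1$ on $[0,T]$ by Gronwall's inequality, and once $\int_\Omega u_1,\dots,\int_\Omega u_{i-1}$ are bounded on $[0,T]$, the $i$-th line together with a further Gronwall argument bounds $\int_\Omega u_i$ on $[0,T]$. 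Feeding these bounds back into the control inequality also gives $\|f_i(u)\|_{L^1((0,T)\times\Omega)}\le C(T)$ for every $i$.

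\textbf{Step 2: bootstrap in $L^p$.} Writing each component by Duhamel's formula, $u_i(t)=e^{td_i\Delta}u_{i0}+\int_0^t e^{(t-s)d_i\Delta}f_i(u(s))\,ds$, and using the standard $L^q(\Omega)\to L^r(\Omega)$ smoothing estimates for $e^{\tau d_i\Delta}$ with the parabolic gain $\tfrac n2(\tfrac1q-\tfrac1r)$ of integrability, a uniform bound on $f_i(u)$ in $L^q$ upgrades to a uniform bound on $u_i$ in $L^r$ for $r$ below the critical Sobolev exponent. Combining this with the polynomial growth $|f_i(u)|\le C(1+|u|^{\ell})$ and, crucially, with the triangular mass-control inequality — which permits estimating the components one at a time and thereby closing the otherwise supercritical nonlinear feedback — one obtains, by induction on the exponent, $\sup_{t\in[0,T]}\|u_i(t)\|_{L^p(\Omega)}\le C(p,T)$ for every finite $p$ and every $i$.

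\textbf{Step 3: conclusion.} Choosing $p>\frac n2$ large enough that $\|f_i(u(t))\|_{L^p(\Omega)}\le C\big(1+\|u(t)\|_{L^{p\ell}(\Omega)}^{\ell}\big)$ is bounded on $[0,T]$ by Step 2, the regularization result of \cite{henry} gives $\sup_{t\in[0,T]}\|u_i(t)\|_{L^\infty(\Omega)}<+\infty$; since $T<T^*$ was arbitrary, Lemma \ref{lem:class1} forces $T^*=+\infty$, and the solution is global and strong. I expect the heart of the argument — and the only genuinely delicate point — to be the $L^p$ bootstrap of Step 2: a scalar mass bound alone does not suffice when $f$ has supercritical polynomial growth, and it is precisely the lower-triangular structure of $P$ (the successive ``peeling off'' of one species at a time) that makes the iteration close. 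The $L^1$ estimate of Step 1 and the final passage to $L^\infty$ in Step 3 are, by contrast, routine.
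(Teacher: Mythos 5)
The paper does not actually prove this lemma: it is stated as a classical result and attributed to \cite{Morgan89} and \cite{P10}, so your attempt has to be measured against the argument in those references rather than against anything in the text. Your overall architecture (an $L^{1}$ mass bound, promotion to $L^{p}$ for some $p>\tfrac{n}{2}$, then $L^{\infty}$ via the regularizing effect and the blow-up alternative of Lemma \ref{lem:class1}) is the right skeleton, and Steps 1 and 3 are essentially sound. One slip in Step 1: the row-by-row Gronwall induction does not close in the order you run it, because the right-hand side $b_i\big(|\Omega|+\int_\Omega\sum_k u_k\big)$ of \emph{every} row already contains the total mass, including the components you have not yet bounded. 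The standard fix is to sum all the rows of $P$ at once (the column sums $\sum_i p_{ij}\geq p_{jj}>0$ are positive by lower-triangularity and invertibility) and apply Gronwall a single time to $\int_\Omega\sum_i(Pu)_i$.

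The genuine gap is Step 2. From $\|f_i(u)\|_{L^{1}((0,T)\times\Omega)}\leq C(T)$, the $L^{q}\to L^{r}$ smoothing of the heat semigroup yields $u_i\in L^{r}$ only for $r<\tfrac{n+2}{n}$; if $f$ grows polynomially with degree $\ell$, the next iterate gives $f_i(u)\in L^{r/\ell}$, which for $\ell\geq\tfrac{n+2}{n}$ is no better than where you started, so the Duhamel iteration stalls rather than bootstraps. You correctly sense that the triangular hypothesis must rescue the argument, but invoking it to ``close the supercritical feedback'' is an assertion, not a proof: the rows of $Pf\leq\big(1+\sum_i r_i\big)\mathbf{b}$ give only \emph{one-sided} bounds on the $f_j$, and extracting, say, $f_2$ from the second row requires a lower bound on $f_1$, which polynomial growth supplies only at the supercritical rate you are trying to avoid. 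The mechanism in \cite{Morgan89} and \cite{P10} is structurally different: one works with the partial sums $z_k=\sum_{j\leq k}p_{kj}u_j$, which satisfy $\partial_t z_k-\Delta w_k\leq b_k\big(1+\sum_i u_i\big)$ with $w_k=\sum_{j\leq k}p_{kj}d_ju_j$ pinched between $\big(\min_j d_j\big)z_k$ and $\big(\max_j d_j\big)z_k$, and one applies an $L^{p}$ \emph{duality} estimate --- testing against nonnegative solutions of the backward dual problem and using $L^{p'}$ maximal regularity --- to obtain $\|z_k\|_{L^{p}}$ for all finite $p$ \emph{independently of the growth of $f$}; only then does the regularizing effect deliver the $L^{\infty}$ bound and hence $T^{*}=+\infty$. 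Without this duality step (or Morgan's equivalent intermediate-sums/Lyapunov machinery) your Step 2 does not go through.
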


Under these assumptions, the following local existence result is well known, see \cite{henry}.

\begin{theorem}
	\label{thm:class3}
	The system (\ref{eq:class1}) admits a unique, classical solution $(u,v)$ on $%
	[0,T_{\max }]\times \Omega $. If $T_{\max }<\infty $ then 
	\begin{equation}
		\underset{t\nearrow T_{\max }}{\lim }\Big\{ \left\Vert u(t,.)\right\Vert
		_{\infty }+\left\Vert v(t,.)\right\Vert _{\infty } \Big\} =\infty ,  
	\end{equation}%
	where $T_{\max }$ denotes the eventual blow-up time in $\mathbb{L}^{\infty }(\Omega ).$
\end{theorem}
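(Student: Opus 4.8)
The plan is to recognize \eqref{eq:class1} (in particular the PDE form of \eqref{2}, the case $m=2$) as an abstract semilinear parabolic Cauchy problem and to run it through the analytic-semigroup machinery of \cite{henry}; since Lemma \ref{lem:class1} is already quoted just above, the present statement is in effect its specialization, so I would spell out the underlying mechanism and then the continuation argument. For the functional setting, fix $X=\mathbb{L}^{p}(\Omega)^{m}$ with $p>n/2$ (or $X=\mathbb{C}(\overline{\Omega})^{m}$) and set $A=\mathrm{diag}(-d_{1}\Delta,\dots,-d_{m}\Delta)$ with domain the $W^{2,p}$-vector fields obeying the homogeneous Neumann condition $\partial_{\nu}u_{i}=0$. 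Since $\Omega$ is bounded with smooth boundary, each $-d_{i}\Delta$ with Neumann conditions is sectorial on $X$ after a shift $A+\omega I$, so $-A$ generates an analytic semigroup $\{e^{-tA}\}_{t\ge0}$ with $\|A^{\beta}e^{-tA}\|\le C_{\beta}t^{-\beta}$, and the fractional power spaces $X^{\alpha}=D((A+\omega I)^{\alpha})$, $\alpha\in[0,1]$, are available, with $X^{\alpha}\hookrightarrow W^{2\alpha,p}(\Omega)^{m}$.

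Next I would handle the nonlinearity and invoke local existence. For the PDE form of \eqref{2} the reaction vector is $F=(f_{1},f_{2})$ with $f_{1}(u,v)=bu(1-u-cv)$ and $f_{2}(u,v)=v(\tfrac{1}{1+ku}-v-au-muv)$; it is rational with the only denominator $1+ku\ge1$ on $\{u\ge0\}$, hence $C^{\infty}$ there (extend $F$ by a $C^{1}$ cutoff across the pole at $u=-1/k$, which lies outside the region of interest), in particular $C^{1}$ and locally Lipschitz from bounded subsets of $\mathbb{L}^{\infty}(\Omega)^{m}$ into $X$. Choosing $\alpha\in(n/(2p),1)$ gives the Sobolev embedding $X^{\alpha}\hookrightarrow\mathbb{L}^{\infty}(\Omega)^{m}$, so $F:X^{\alpha}\to X$ is locally Lipschitz, and Henry's local existence theorem — a contraction for the variation-of-constants integral in $C([0,T];X^{\alpha})$, with the usual $t^{\alpha}$-weight to admit merely $\mathbb{L}^{\infty}$ (resp. continuous) initial data — yields $T>0$ and a unique mild solution $U\in C([0,T);X^{\alpha})\cap C((0,T);D(A))$ of $U'+AU=F(U)$, $U(0)=U_{0}$; a standard parabolic bootstrap (re-inserting $U$ into the mild formula and using the smoothing bounds) upgrades it to a classical solution on $(0,T)\times\Omega$, and maximal continuation produces a unique maximal classical solution on $[0,T_{\max})\times\Omega$. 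Because $F$ is quasi-positive ($f_{1}(0,v)=f_{2}(u,0)=0\ge0$), Lemma \ref{lem:class1} also keeps $U\ge0$ whenever $U_{0}\ge0$, i.e. inside the region where $F$ is genuinely smooth.

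For the blow-up alternative, Henry's continuation criterion gives $T_{\max}<\infty\Rightarrow\limsup_{t\nearrow T_{\max}}\|U(t)\|_{X^{\alpha}}=\infty$, and it remains to transfer this to the $\mathbb{L}^{\infty}$ norm. Suppose $\sup_{t<T_{\max}}(\|u(t)\|_{\infty}+\|v(t)\|_{\infty})<\infty$ and call this bound $M$; then $\|F(U(t))\|_{X}\le C(M)$, and applying $A^{\alpha}$ to $U(t)=e^{-(t-t_{0})A}U(t_{0})+\int_{t_{0}}^{t}e^{-(t-s)A}F(U(s))\,ds$ for a fixed $t_{0}\in(0,T_{\max})$ gives
$$
\|U(t)\|_{X^{\alpha}}\le C\|U(t_{0})\|_{X^{\alpha}}+C_{\alpha}C(M)\int_{t_{0}}^{t}(t-s)^{-\alpha}\,ds\le C_{1}+\frac{C_{\alpha}C(M)}{1-\alpha}\,T_{\max}^{1-\alpha},
$$
a bound uniform in $t<T_{\max}$, contradicting the continuation criterion. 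Hence boundedness of $\|u\|_{\infty}+\|v\|_{\infty}$ on $[0,T_{\max})$ is impossible once $T_{\max}<\infty$ — which is precisely the bracketed implication of Lemma \ref{lem:class1} — and restarting the solution from any time at which the $\mathbb{L}^{\infty}$ norm is $\le M$ (its local existence time depending only on $M$, through the estimate just obtained) sharpens this to $\|u(t)\|_{\infty}+\|v(t)\|_{\infty}\to\infty$ as $t\nearrow T_{\max}$, the form stated in the theorem.

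On difficulty: the scheme is entirely classical and, given Lemma \ref{lem:class1}, the theorem is essentially a quotation, so the only genuinely delicate points are (i) sectoriality of the Neumann Laplacian and the accompanying identification of $D(A)$ and of the interpolation scale $X^{\alpha}$ on the chosen space — where the regularity of $\partial\Omega$ and the choice between $\mathbb{L}^{p}$ and $\mathbb{C}(\overline{\Omega})$ enter — and (ii) the $X^{\alpha}$-to-$\mathbb{L}^{\infty}$ bootstrap above, whose only subtlety is that the Lipschitz constant of $F$ met along the solution is controlled by an $\mathbb{L}^{\infty}$ bound alone, which is immediate since $F\in C^{1}$. I expect (i) to be the main technical obstacle to a fully self-contained proof, which is presumably why the authors simply invoke \cite{henry} and \cite{Morgan89}.
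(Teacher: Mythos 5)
Your proposal is correct and is precisely the standard analytic-semigroup argument that the paper implicitly invokes: the authors give no proof of this theorem at all, simply citing \cite{henry} as "well known", and your reconstruction (sectorial Neumann Laplacian, fractional power spaces $X^{\alpha}\hookrightarrow\mathbb{L}^{\infty}$, contraction in the weighted mild-solution space, continuation criterion, and the variation-of-constants bootstrap transferring the blow-up alternative from $X^{\alpha}$ to $\mathbb{L}^{\infty}$) is exactly the content of the cited result. No discrepancy to report.
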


The next result follows from the application of standard theory \cite{kish88}.

\begin{theorem}
	\label{thm:km1}
	Consider the reaction-diffusion system (\ref{eq:class1}). For spatially homogenous initial data $u_{0} \equiv c, v_{0} \equiv d$, with $c,d>0$, then the dynamics of (\ref{eq:class1}) and its resulting kinetic (ODE) system, when $d_{1}=d_{2}=0$ in (\ref{eq:class1}), are equivalent.
\end{theorem}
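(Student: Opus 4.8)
The plan is to exploit the fact that the subspace of spatially constant profiles is invariant under the semiflow generated by the reaction--diffusion system \eqref{eq:class1}, and that on this subspace the PDE collapses exactly to its kinetic counterpart, which here is the ODE system \eqref{2}. Concretely, write the kinetic system as $\dot x=F(x,y)$, $\dot y=G(x,y)$ with $F,G$ the right-hand sides of \eqref{2}, and let $(\bar x(t),\bar y(t))$ be its unique solution with $\bar x(0)=c$, $\bar y(0)=d$; this exists on a maximal interval and is unique by Picard--Lindel\"of, since $F,G\in C^1$.

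First I would check that the spatially homogeneous pair $\tilde u(t,x):=\bar x(t)$, $\tilde v(t,x):=\bar y(t)$ is a classical solution of \eqref{eq:class1} with $d_1,d_2>0$ and initial data $u_0\equiv c$, $v_0\equiv d$. Because $\tilde u,\tilde v$ do not depend on $x$, we have $\Delta\tilde u=\Delta\tilde v=0$ and $\partial_\nu\tilde u=\partial_\nu\tilde v=0$ on $\partial\Omega$, so the diffusion terms and the homogeneous Neumann conditions are satisfied automatically and \eqref{eq:class1} reduces to $\partial_t\tilde u=F(\tilde u,\tilde v)$, $\partial_t\tilde v=G(\tilde u,\tilde v)$, which hold by construction; quasi-positivity together with $c,d>0$ keeps the pair in $\mathbb{R}_+^2$. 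By the uniqueness of classical solutions in Theorem \ref{thm:class3} (equivalently Lemma \ref{lem:class1}), the solution $(u,v)$ of \eqref{eq:class1} issuing from $(c,d)$ must coincide with $(\tilde u,\tilde v)$ on the entire maximal interval. Hence $(u,v)$ stays spatially homogeneous for all time and $\|u(t,\cdot)\|_\infty=|\bar x(t)|$, $\|v(t,\cdot)\|_\infty=|\bar y(t)|$; the $\mathbb{L}^\infty$ blow-up criterion of Theorem \ref{thm:class3} then forces the maximal PDE existence time to equal that of the ODE, and combined with the boundedness of solutions of \eqref{2} obtained in Section 2 (Theorems 1 and 2) this yields $T_{\max}=+\infty$.

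For the reverse inclusion I would note that any spatially homogeneous solution of \eqref{eq:class1}, written $u(t,x)=p(t)$, $v(t,x)=q(t)$, satisfies $\Delta u=\Delta v=0$, so $(p,q)$ solves $\dot p=F(p,q)$, $\dot q=G(p,q)$. Thus $(\bar x,\bar y)\mapsto(\tilde u,\tilde v)$ is a bijection between orbits of the kinetic system and spatially homogeneous orbits of \eqref{eq:class1}, and it intertwines the two flows --- i.e. the embedding of constants is a topological conjugacy between the ODE flow of \eqref{2} and the restriction of the PDE semiflow to the constant manifold. Consequently every phase-portrait feature established for \eqref{2} (the equilibria $E_0,E_1,E_2,E_{1*},E_{2*},E_{3*}$, their local stability types, the global stability of $E_{2*}$, and the absence of limit cycles) transfers verbatim to spatially homogeneous data, which is the asserted equivalence of dynamics.

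I do not anticipate a genuine analytic obstacle: the whole content is the invariance of the constant subspace, which rests on uniqueness of classical solutions, on the reaction terms carrying no explicit $x$-dependence, and on constants satisfying the homogeneous Neumann condition. The only points needing care are bookkeeping ones --- verifying that the local existence/uniqueness machinery of Theorem \ref{thm:class3} applies to the (smooth in $x$) constant data, and being precise that ``equivalence of dynamics'' here means conjugacy of the \emph{restricted} semiflow, not of the full infinite-dimensional one, since non-homogeneous initial data need not track the ODE.
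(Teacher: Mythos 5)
Your argument is correct and is precisely the standard reasoning the paper appeals to: the paper offers no written proof of Theorem \ref{thm:km1}, simply asserting that it ``follows from the application of standard theory'' with a citation, and the content of that standard theory is exactly your observation that the spatially constant subspace is invariant under the Neumann semiflow (constants annihilate the Laplacian and the boundary condition) so that uniqueness of classical solutions forces the PDE solution with flat data to track the kinetic ODE, with the $\mathbb{L}^{\infty}$ blow-up criterion and the ODE bounds giving global existence. Your closing caveats --- that the argument needs the reaction terms to carry no explicit $x$-dependence, and that the asserted equivalence is a conjugacy only of the \emph{restricted} semiflow --- are well placed, since for the heterogeneous-$k(x)$ system \eqref{PDEmodel} the constant subspace is not invariant, which is precisely why the paper routes its later arguments through the constant-coefficient comparison systems \eqref{eq:upper}--\eqref{eq:lowest} rather than applying Theorem \ref{thm:km1} to \eqref{PDEmodel} directly.
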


Our current aim is to explore the scenario where the fear function exhibits spatial heterogeneity. This perspective finds motivation in various ecological and sociological contexts. For instance, it is quite common for prey to exhibit higher fear levels in proximity to a predator's lair but lower fear levels in regions of refuge, as mentioned in \cite{Zhang19}. Additionally, areas with high population density may lead to reduced fear due to group defense mechanisms, as discussed in \cite{Samsal20}. Given these considerations, it is plausible to assume that the fear coefficient $k$ is not a constant but varies across the spatial domain $\Omega$, i.e., $k=k(x)$. The specific form of $k(x)$ may differ depending on the particular application, aligning with the concept of the Landscape of Fear (LOF) \cite{Brown99}. Consequently, we now consider the following spatially explicit version of \eqref{2}, featuring a heterogeneous fear function $k(x)$, which results in the following reaction-diffusion system:

\begin{equation}\label{PDEmodel}
	\left\{\begin{array}{l}
		u_t =d_1 \Delta u + bu\left( 1-u-cv \right), \quad x \in  \Omega, \vspace{2ex}\\
		v_t =d_2 \Delta v +v\left( \f{1}{1+k(x)u}-v-au-muv \right), \quad x \in  \Omega, \vspace{2ex}\\
		\dfrac{\partial u}{\partial \nu} = \dfrac{\partial v}{\partial \nu} =0, \quad \text{on} \quad \partial \Omega. \vspace{2ex}\\
		u(x,0)=u_0(x)\equiv c >0, \quad v(x,0)=v_0(x) \equiv d>0,
	\end{array}\right.
\end{equation}

Furthermore, we impose the following restrictions on the fear function $k(x)$,

\begin{align}\label{eq:as1}
	\begin{split}
		&(i) \quad k(x)  \in C^{1}(\Omega),
		\\
		&(ii) \quad k(x) \geq 0,
		\\
		& (iii)\quad  \mbox{If} \  k(x) \equiv 0 \ \mbox{on}  \ \Omega_{1} \subset \Omega, \ \mbox{then} \ |\Omega_{1}| = 0.
		\\
		& (iv)\quad  \mbox{If} \  k(x) \equiv 0 \ \mbox{on}  \ \cup^{n}_{i=1}\Omega_{i} \subset \Omega, \ \mbox{then} \ \Sigma^{n}_{i=1}|\Omega_{i}| = 0.
	\end{split}
\end{align}

\begin{remark}
	If $k(x) \equiv 0$ on $\Omega_{1} \subset \Omega$, with $|\Omega_{1}| > \delta > 0$, or $q(x) \equiv 0$ on $\cup^{n}_{i=1}\Omega_{i} \subset \Omega$, with $\Sigma^{n}_{i=1}|\Omega_{i}| > \delta > 0$, that is, on non-trivial parts of the domain, the analysis is notoriously difficult, as one now is dealing with a \emph{degenerate} problem. See \cite{Du02a,Du02b} for results on this problem. This case is not in the scope of the current manuscript. 
\end{remark}
Since the nonlinear right hand side of (\ref{PDEmodel}) is continuously
differentiable on $\mathbb{R}^{+}\times $ $\mathbb{R}^{+}$, then for any
initial data in $\mathbb{C}\left( \overline{\Omega }\right) $ or $\mathbb{L}%
^{p}(\Omega ),\;p\in \left( 1,+\infty \right) $, it is standard to 
estimate the $\mathbb{L}^{p}-$norms of the solutions and thus deduce global existence. The standard theory will apply even in the case of a bonafide fear function $k(x)$ because due to our assumptions on the form of $k$,  standard comparison arguments will apply \cite{Gil77}. Thus applying the classical methods above, via Theorem \ref{thm:class3}, and Lemmas \ref{lem:class1}-\ref{lem:class2}, we can state the following lemmas:

\begin{lemma}
	\label{lem:pos1}
	Consider the reaction-diffusion system \eqref{PDEmodel}, for $k(x)$ such that the assumptions via \eqref{eq:as1} hold. Then, the solutions to \eqref{PDEmodel} are non-negative as long as they initiate from positive initial conditions.
\end{lemma}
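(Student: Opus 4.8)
The plan is to realize the reaction part of \eqref{PDEmodel} as a quasi-positive, continuously differentiable vector field on $\mathbb{R}_+\times\mathbb{R}_+$ and then apply Lemma \ref{lem:class1} essentially verbatim. Write the kinetics as $f_1(x,u,v)=bu\left(1-u-cv\right)$ and $f_2(x,u,v)=v\left(\f{1}{1+k(x)u}-v-au-muv\right)$. Under assumption \eqref{eq:as1}(i)--(ii), for every $x\in\Omega$ the denominator $1+k(x)u$ is bounded below by $1$ on $u\ge 0$, so $(u,v)\mapsto(f_1,f_2)$ is $C^1$ on a neighbourhood of the closed first quadrant, with the $x$-dependence inherited from $k\in C^1$; hence the system falls within the framework of Lemma \ref{lem:class1}, and local classical solutions on $[0,T_{\max})$ are supplied by Theorem \ref{thm:class3}.

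Next I would verify quasi-positivity. Evaluating the first component on $\{u=0\}$ gives $f_1(x,0,v)=0\ge 0$ for all $v\ge 0$, and evaluating the second on $\{v=0\}$ gives $f_2(x,u,0)=0\ge 0$ for all $u\ge 0$. Thus the nonlinearity $(f_1,f_2)$ is quasi-positive in the sense of Lemma \ref{lem:class1}. Since the prescribed data $u_0\equiv c>0$ and $v_0\equiv d>0$ are in particular non-negative and lie in $L^\infty(\Omega)$, the quasi-positivity clause of Lemma \ref{lem:class1} then yields $u(x,t)\ge 0$ and $v(x,t)\ge 0$ for all $t\in[0,T_{\max})$, which is precisely the assertion.

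The only point demanding care — and the one I would flag as the main (if mild) obstacle — is the spatial heterogeneity of $k$: one must check that replacing the constant $k$ by $k(x)$ does not spoil the structural hypotheses of Lemma \ref{lem:class1}. This is exactly why \eqref{eq:as1}(i)--(ii) are imposed; they guarantee that $1/(1+k(x)u)$ is Lipschitz in $u$ on $\mathbb{R}_+$ uniformly in $x$ and jointly $C^1$ in $(x,u)$, so that the invariance of the positive cone (equivalently, a componentwise comparison with the trivial subsolution $\underline{u}=\underline{v}=0$, using the comparison machinery of \cite{Gil77}) goes through unchanged. The genuinely delicate degenerate case, where $k$ vanishes on a subset of positive measure, is explicitly excluded by \eqref{eq:as1}(iii)--(iv) and lies outside the present scope. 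Finally, if one wanted the stronger statement $u,v>0$ for $t>0$, it would follow from the parabolic strong maximum principle applied to each scalar equation once non-negativity and boundedness of the solution are in hand, since then $u$ and $v$ satisfy linear parabolic inequalities with bounded coefficients.
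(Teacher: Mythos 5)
Your proposal is correct and follows essentially the same route as the paper, which states Lemma \ref{lem:pos1} as a direct consequence of the quasi-positivity clause of Lemma \ref{lem:class1} together with the local existence theory of Theorem \ref{thm:class3}, noting only that the assumptions \eqref{eq:as1} on $k(x)$ keep the standard arguments intact. Your explicit verification that $f_1(x,0,v)=0$ and $f_2(x,u,0)=0$ simply fills in the (omitted) quasi-positivity check that the paper takes for granted.
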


\begin{lemma}
	\label{lem:cl1}
	Consider the reaction-diffusion system \eqref{PDEmodel}. For $k(x)$ such that the assumptions via \eqref{eq:as1} hold. The solutions to \eqref{PDEmodel} are classical. That is for $(u_{0},v_{0}) \in \mathbb{L}^{\infty }(\Omega )$,  $(u,v) \in C^{1}(0,T; C^{2}(\Omega))$, $\forall T$.
\end{lemma}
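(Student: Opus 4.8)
The plan is to combine the local existence theory already recalled in Theorem \ref{thm:class3} with a uniform a priori $\mathbb{L}^{\infty}$ estimate and a standard parabolic bootstrap. By Theorem \ref{thm:class3}, for $(u_{0},v_{0})\in \mathbb{L}^{\infty}(\Omega)$ there is a unique classical solution $(u,v)$ of \eqref{PDEmodel} on a maximal interval $[0,T_{\max})$, and by Lemma \ref{lem:pos1} this solution remains non-negative. It therefore suffices to show that $\sup_{t\in[0,T_{\max})}\big(\|u(t,\cdot)\|_{\infty}+\|v(t,\cdot)\|_{\infty}\big)<\infty$, since the blow-up alternative in Theorem \ref{thm:class3} then forces $T_{\max}=\infty$, after which regularity on every slab $\Omega\times(0,T]$ gives the claim for all $T$.

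First I would bound $u$. Because $u,v\geq 0$, the first equation of \eqref{PDEmodel} yields the differential inequality $u_{t}-d_{1}\Delta u = bu(1-u-cv)\leq bu(1-u)$ with homogeneous Neumann data, so the parabolic comparison principle, comparing $u$ with the spatially homogeneous solution of $w'=bw(1-w)$ issuing from $w(0)=\|u_{0}\|_{\infty}$, gives $\|u(t,\cdot)\|_{\infty}\leq \max\{1,\|u_{0}\|_{\infty}\}$ on $[0,T_{\max})$. For $v$ the key observation is that the fear term is harmless: for $u\geq 0$ and $k(x)\geq 0$ one has $0\leq 1/(1+k(x)u)\leq 1$, while $-au-muv\leq 0$, so the second equation gives $v_{t}-d_{2}\Delta v\leq v(1-v)$, and the same comparison argument yields $\|v(t,\cdot)\|_{\infty}\leq \max\{1,\|v_{0}\|_{\infty}\}$. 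Hence the solution is uniformly bounded, and so global.

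It remains to upgrade the bounded classical solution on $(0,T)$ to the regularity $(u,v)\in C^{1}(0,T;C^{2}(\Omega))$. With $u,v\in \mathbb{L}^{\infty}$ and $k\in C^{1}(\Omega)$ (hence locally H\"older), the reaction terms $F,G$ are bounded and, along the solution, H\"older continuous in $(x,t)$; $\mathbb{L}^{p}$ parabolic estimates for the Neumann problem then place $u,v$ in $W^{2,1}_{p}$ for every finite $p$, and Sobolev embedding gives $u,v\in C^{1+\beta,(1+\beta)/2}$ for some $\beta\in(0,1)$. Feeding this back, the right-hand sides become H\"older continuous in all variables, so Schauder estimates yield $u,v\in C^{2+\beta,1+\beta/2}$ on $\Omega\times(0,T]$, which is exactly the asserted $C^{1}(0,T;C^{2}(\Omega))$ regularity; uniqueness is inherited from Theorem \ref{thm:class3}.

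The only point requiring any care is the spatial regularity carried by the fear coefficient: the Schauder step for the $v$-equation needs $x\mapsto k(x)$ to be at least H\"older, and this is precisely guaranteed by assumption \eqref{eq:as1}(i). The near-degeneracy permitted by \eqref{eq:as1}(iii)--(iv), namely that $k$ may vanish on a null set, plays no role in this lemma, since the comparison bounds use only $k\geq 0$ and $1/(1+k(x)u)\leq 1$ pointwise; this is in contrast to the genuinely degenerate situation flagged in the remark following \eqref{eq:as1}, which is excluded here. I therefore do not expect a serious obstacle, the proof being a routine global-existence-plus-bootstrap argument once the boundedness of the fear nonlinearity is noted.
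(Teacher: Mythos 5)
Your proposal is correct, and it fills in a proof that the paper essentially leaves to citation: the paper states Lemma \ref{lem:cl1} without a dedicated argument, appealing instead to the general machinery it recalls earlier (the $\mathbb{L}^{p}$-estimate route of Theorem \ref{thm:class3} and Lemmas \ref{lem:class1}--\ref{lem:class2}, i.e.\ quasi-positivity plus polynomial growth plus the triangular mass-control condition of Morgan and Pierre) together with the remark that ``standard comparison arguments will apply.'' Your argument is the concrete instantiation of that remark: since $0\le 1/(1+k(x)u)\le 1$ and the competition terms are nonpositive, both components are dominated by logistic supersolutions, which gives the uniform $\mathbb{L}^{\infty}$ bound directly and bypasses the need to verify the $Pf(r)\le[1+\sum_i r_i]\mathbf{b}$ structure or to chase $\mathbb{L}^{p}$ norms for $p>n/2$. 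The blow-up alternative then yields globality, and your $W^{2,1}_{p}$-to-Schauder bootstrap is the standard way to reach the stated $C^{1}(0,T;C^{2}(\Omega))$ regularity (the only implicit ingredients are sufficient smoothness of $\partial\Omega$ for the Neumann Schauder theory, and $k\in C^{1}(\Omega)$ supplying the H\"older regularity of the coefficient, both of which you correctly flag). So the two routes buy the same conclusion; yours is more elementary and self-contained, the paper's is shorter but leans entirely on the cited classical theory.
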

Our goal in this section is to investigate the dynamics of \eqref{PDEmodel}. Herein, we will use the comparison technique and compare it to the ODE cases of classical competition or the constant fear function case, where the dynamics are well known. 

\begin{remark}
	This section's analysis primarily focuses on the choice of spatially homogenous (flat) initial data.
\end{remark}

Let's define some PDE systems,

\begin{align}\label{eq:lv_model}
	\begin{split}
		\overline{u}_t &= d_1 \overline{u}_{xx} + b \overline{u}\left( 1-\overline{u}-c \overline{v} \right) , \\
		\overline{v}_t &= d_2 \overline{v}_{xx} +\overline{v}\left( 1-v-au-muv \right),
	\end{split}
\end{align}

\begin{align}\label{eq:upper}
	\begin{split}
		\widehat{u}_t &= d_1 \widehat{u}_{xx} + b\widehat{u}\left( 1-\widehat{u}-c\widehat{v} \right) , \\
		\widehat{v}_t &= d_2 \widehat{v}_{xx} +\widehat{v}\left( \f{1}{1+ \mathbf{\widehat{k}}\widehat{u}}-\widehat{v}-a\widehat{u}-m\widehat{u}\widehat{v} \right),
	\end{split}
\end{align}

\begin{align}\label{eq:lower}
	\begin{split}
		\widetilde{u}_t &= d_1 \widetilde{u}_{xx} + bu\left( 1-\widetilde{u}-c\widetilde{v} \right) , \\
		\widetilde{v}_t &= d_2 \widetilde{v}_{xx} + \widetilde{v}\left( \f{1}{1+ \mathbf{\widetilde{k}}\widetilde{u}}-\widetilde{v}-a\widetilde{u}-m\widetilde{u} \widetilde{v} \right),
	\end{split}
\end{align}

\begin{align}\label{eq:lowest}
	\begin{split}
		\tilde{u}_t &= d_1 \tilde{u}_{xx} + b\tilde{u}\left( 1-\tilde{u}-c\tilde{v} \right) , \\
		\tilde{v}_t &= d_2 \tilde{v}_{xx} + \tilde{v}\left( \f{1}{1+ \mathbf{\widetilde{k}}}-\tilde{v}-a\tilde{u}-m\tilde{u}\tilde{v} \right),
	\end{split}
\end{align}

where
\begin{align}\label{eq:lowest1}
	\mathbf{\widehat{k}} = \min_{x\in \Omega} k(x), \quad \, \quad   \mathbf{\widetilde{k} }= \max_{x\in \Omega} k(x).
\end{align}

We assume Neumann boundary conditions for all of the reaction diffusion systems \eqref{eq:lv_model}-\eqref{eq:lowest}. Also, we prescribe spatially homogenous (flat) initial conditions in each system:
$u(x,0)=u_0 (x) \equiv c>~0, \quad v(x,0)=v_0(x) \equiv d > 0.$

\begin{theorem}\label{lem:com1}
    For the reaction-diffusion system (\ref{PDEmodel}) of Allelopathic Phytoplankton with a fear function $k(x)$, as well as the reaction-diffusion systems (\ref{eq:upper})-(\ref{eq:lower}). Then the following point-wise comparison holds,
    \[ \widetilde{v} \le v \le \widehat{v}.\]
\end{theorem}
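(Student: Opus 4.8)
The plan is to exploit the fact that the heterogeneous fear coefficient is trapped between two constants, $\widehat{k}=\min_{x\in\Omega}k(x)\le k(x)\le\max_{x\in\Omega}k(x)=\widetilde{k}$ by \eqref{eq:lowest1}, together with the observation that $s\mapsto\tfrac1{1+su}$ is non-increasing for each fixed $u\ge 0$. Since every solution involved is non-negative by Lemma \ref{lem:pos1}, this gives $\tfrac1{1+\widetilde{k}u}\le\tfrac1{1+k(x)u}\le\tfrac1{1+\widehat{k}u}$ on $\Omega$ for $u\ge 0$; the $u$-equations of \eqref{PDEmodel}, \eqref{eq:upper} and \eqref{eq:lower} are identical, while the $v$-reaction of \eqref{eq:lower} lies pointwise below that of \eqref{PDEmodel}, which lies below that of \eqref{eq:upper}, whenever all three are evaluated at a common non-negative state. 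The whole estimate will be built on this ordering of the vector fields.

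The genuine subtlety is that \eqref{PDEmodel} is a \emph{competitive}, not a cooperative, system, so a naive scalar parabolic comparison applied to the $v$-equation alone is not legitimate: that equation's reaction depends on $u$, which a priori differs across the three systems. I would instead invoke the comparison principle for two-species competition reaction-diffusion systems via coupled upper-lower solutions (equivalently, monotonicity of the semiflow with respect to the competitive cone $K=\mathbb{R}_+\times\mathbb{R}_-$). The required quasimonotonicity signs hold on $\mathbb{R}_+^2$: $\partial_v\!\left[bu(1-u-cv)\right]=-bcu\le 0$ and $\partial_u\!\left[v\big(\tfrac1{1+k(x)u}-v-au-muv\big)\right]=-v\big(\tfrac{k(x)}{(1+k(x)u)^2}+a+mv\big)\le 0$.

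For the upper bound $v\le\widehat v$: the solution $(\widehat u,\widehat v)$ of \eqref{eq:upper} serves as a coupled pair that is a lower solution in $u$ and an upper solution in $v$ for \eqref{PDEmodel}. Its $u$-component solves the $u$-equation of \eqref{PDEmodel} (with $v$ replaced by $\widehat v$) with equality, and
\[
\partial_t\widehat v-d_2\Delta\widehat v=\widehat v\Big(\tfrac1{1+\widehat k\widehat u}-\widehat v-a\widehat u-m\widehat u\widehat v\Big)\ \ge\ \widehat v\Big(\tfrac1{1+k(x)\widehat u}-\widehat v-a\widehat u-m\widehat u\widehat v\Big),
\]
so $\widehat v$ is a supersolution of the $v$-equation of \eqref{PDEmodel} with $u$ replaced by $\widehat u$. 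Completing the quadruple with the constant bounds $\overline u\equiv\max\{1,c\}$ and $\underline v\equiv0$ (admissible by non-negativity and the logistic estimate $\widehat u\le\max\{1,c\}$), and noting that all three systems carry the same flat initial data $u_0\equiv c$, $v_0\equiv d$ and Neumann boundary conditions, the competitive comparison theorem gives $\widehat u\le u$ and $v\le\widehat v$ for all $t>0$. The lower bound $\widetilde v\le v$ is entirely symmetric: $(\widetilde u,\widetilde v)$ solving \eqref{eq:lower} is a coupled pair that is an upper solution in $u$ and a lower solution in $v$ for \eqref{PDEmodel}, because $\tfrac1{1+\widetilde k\widetilde u}\le\tfrac1{1+k(x)\widetilde u}$ makes $\widetilde v$ a subsolution of the $v$-equation; matching the flat data once more yields $u\le\widetilde u$ and $\widetilde v\le v$. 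Chaining the two inequalities gives $\widetilde v\le v\le\widehat v$. The only real obstacle is the bookkeeping needed to make the comparison respect the competitive (rather than cooperative) coupling; once the coupled sub/super-solution framework is set up, every inequality is forced by $\widehat k\le k(x)\le\widetilde k$.
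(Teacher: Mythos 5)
Your proposal is correct and follows the same high-level strategy as the paper: trap $k(x)$ between the constants $\mathbf{\widehat{k}}$ and $\mathbf{\widetilde{k}}$, order the resulting fear terms $\frac{1}{1+su}$, and invoke comparison theory for the reaction--diffusion systems. The difference is in how much of the comparison argument is actually carried out. The paper's proof consists of the single chain of inequalities
\[
\frac{1}{1+\mathbf{\widetilde{k}}}\le\frac{1}{1+\mathbf{\widetilde{k}}\,\widetilde{u}}\le\frac{1}{1+k(x)u}\le\frac{1}{1+\mathbf{\widehat{k}}\,\widehat{u}}\le 1
\]
followed by a citation of standard comparison theory; as written, the middle inequalities already presuppose an ordering of $\widetilde{u}$, $u$, $\widehat{u}$, which is part of what the comparison is supposed to deliver, so the paper's argument is implicitly circular unless one runs the coupled two-component comparison. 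You identify exactly this subtlety (a scalar parabolic comparison on the $v$-equation alone is illegitimate because its reaction depends on $u$), verify the quasimonotonicity signs that make the system competitive, and set up the coupled sub/supersolution pairs $(\widehat{u},\widehat{v})$ and $(\widetilde{u},\widetilde{v})$ so that monotonicity with respect to the cone $\mathbb{R}_+\times\mathbb{R}_-$ yields $\widehat{u}\le u\le\widetilde{u}$ simultaneously with $\widetilde{v}\le v\le\widehat{v}$. In short, your write-up proves what the paper asserts, and is the more complete of the two; the only cosmetic blemishes are the overloaded use of $c$ (competition coefficient versus the flat initial datum) in your constant bound $\overline{u}\equiv\max\{1,c\}$, and the fact that you prove slightly more than the stated conclusion, which is harmless.
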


\begin{proof}
	From the positivity of the solutions to reaction-diffusion systems \eqref{eq:upper}-\eqref{eq:lowest} and via comparison of \eqref{PDEmodel} to the logistic equation to get upper bound for second species, i.e., $v \le 1$. Hence, we have
	\[ \dfrac{1}{1+\mathbf{\widetilde{k}}} \le \dfrac{1}{1+ \mathbf{\widetilde{k}} \hspace{.051in}  \widetilde{u}} \le  \dfrac{1}{1+ k(x) u} \le \dfrac{1}{1+\mathbf{\widehat{k}} \hspace{.051in}  \widehat{u}} \le 1, \quad x \in \Omega. \]
	Hence, the result follows from the standard comparison theory \cite{Gil77}.
\end{proof}

\subsection{Attraction to boundary or interior equilibrium}
\begin{theorem}\label{thm_CE1}
For the reaction-diffusion system (\ref{PDEmodel}) of Allelopathic Phytoplankton with a fear function $k(x)$ that satisfies the parametric restriction
\begin{equation}
    \mathbf{\widehat{k}} > \dfrac{1}{a}-1,
\end{equation}
then there exits some flat initial data such that solution $(u,v)$ to (\ref{PDEmodel})
 converges uniformly to the spatially homogeneous state $(1,0)$ as $t \to \infty$.
\end{theorem}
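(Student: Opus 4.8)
The plan is to reduce the behaviour of the fearful species $v$ to a constant‑coefficient problem by comparison, read off the corresponding ODE phase portrait, and then handle the toxic species $u$ by scalar parabolic comparison. First I would note that, by Theorem~\ref{lem:com1}, every solution $(u,v)$ of \eqref{PDEmodel} satisfies $0\le v\le\widehat v$, where $(\widehat u,\widehat v)$ solves the auxiliary system \eqref{eq:upper} with the \emph{constant} fear coefficient $\mathbf{\widehat{k}}=\min_{x\in\Omega}k(x)$ and the same flat initial data. Since the reaction terms in \eqref{eq:upper} are spatially homogeneous and the initial data are flat, Theorem~\ref{thm:km1} shows that $(\widehat u,\widehat v)$ remains spatially constant and evolves according to its kinetic ODE, which is exactly system \eqref{2} with $k$ replaced by $\mathbf{\widehat{k}}$.

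Next I would invoke the ODE classification of Theorem~\ref{Thm4}. The hypothesis $\mathbf{\widehat{k}}>\tfrac1a-1=k^{*}$ is precisely the condition (case $k>k^{*}$) under which $E_{1}=(1,0)$ is a hyperbolic stable node of \eqref{2}, so the equilibrium $(1,0)$ of the reduced ODE has an open, nonempty basin of attraction. This basin contains points $(c,d)$ with $c,d>0$ arbitrarily close to $(1,0)$; fixing one such pair and prescribing it as the flat initial datum $u_{0}\equiv c$, $v_{0}\equiv d$ for \eqref{PDEmodel} furnishes the ``some flat initial data'' in the statement. For this choice the reduced trajectory satisfies $(\widehat u(t),\widehat v(t))\to(1,0)$, hence $\widehat v(\cdot,t)\to0$ uniformly on $\overline\Omega$, and therefore $0\le v(\cdot,t)\le\widehat v(\cdot,t)\to0$; that is, $v\to0$ uniformly as $t\to\infty$.

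It remains to prove $u\to1$ uniformly. Given $\varepsilon\in(0,1/c)$, I would pick $T$ with $v(x,t)\le\varepsilon$ for all $x\in\Omega$, $t\ge T$. From the first equation of \eqref{PDEmodel}, $u_{t}-d_{1}\Delta u=bu(1-u-cv)$, which is $\le bu(1-u)$ for all $t$ and $\ge bu\big((1-c\varepsilon)-u\big)$ for $t\ge T$. Comparing $u$ with the spatially homogeneous logistic super‑ and sub‑solutions (the sub‑solution started below $\min_{\overline\Omega}u(\cdot,T)>0$, positivity holding by Lemma~\ref{lem:pos1} and the strong maximum principle) and using standard parabolic comparison \cite{Gil77} gives $\limsup_{t\to\infty}\max_{\overline\Omega}u\le1$ and $\liminf_{t\to\infty}\min_{\overline\Omega}u\ge1-c\varepsilon$. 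Letting $\varepsilon\to0$ yields $u\to1$ uniformly, and together with $v\to0$ this is the claimed convergence to $(1,0)$.

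The substantive content sits entirely in the two previously established inputs: Theorem~\ref{lem:com1}, which supplies the constant‑coefficient envelope needed to bypass the spatial heterogeneity of $k(x)$, and Theorem~\ref{thm:km1}, which converts that envelope problem into the planar ODE already analysed. Granting these, the only delicate point is the \emph{selection} of the flat initial data: when an interior saddle $E_{1*}$ coexists (cf.\ Table~1) convergence to $(1,0)$ cannot hold for all positive data, which is exactly why the statement is existential, so one must take $(c,d)$ inside the open basin of $(1,0)$. The step $u\to1$ is then routine scalar parabolic comparison, so I expect no genuine obstacle beyond bookkeeping.
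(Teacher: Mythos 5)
Your argument is correct, and its first half (controlling $v$) coincides with the paper's: both reduce to the constant-coefficient envelope system \eqref{eq:upper} via Theorem~\ref{lem:com1}, use flat initial data so that the envelope evolves by its kinetic ODE (Theorem~\ref{thm:km1}), and invoke Theorem~\ref{Thm4} with $\mathbf{\widehat{k}}>k^{*}$ to place $(1,0)$ as a stable node whose basin supplies the required data. The difference is in how the $u$-component is concluded. The paper runs the same argument for the lower envelope \eqref{eq:lower} (using $\mathbf{\widetilde{k}}>\mathbf{\widehat{k}}>k^{*}$) and then ``squeezes'' the full vector $(u,v)$ between $(\widetilde u,\widetilde v)$ and $(\widehat u,\widehat v)$, even though Theorem~\ref{lem:com1} as stated only orders the $v$-components; the convergence of $u$ is therefore asserted rather than derived. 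You instead observe that $0\le v\le\widehat v\to 0$ already disposes of $v$ without the lower envelope at all, and then prove $u\to 1$ by a separate scalar parabolic comparison, trapping $u$ between logistic sub- and super-solutions with intrinsic growth rates $b(1-c\varepsilon)$ and $b$ once $v\le\varepsilon$. This costs you an extra $\varepsilon$-argument and an appeal to the strong maximum principle for $\min_{\overline\Omega}u(\cdot,T)>0$, but it buys a complete justification of the $u$-limit and dispenses with the lower system entirely; in that respect your route is both more economical in its inputs and more rigorous in its conclusion than the paper's.
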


\begin{proof}
	Consider the reaction-diffusion system given by equation \eqref{eq:upper}. Since the parameter $\mathbf{\widehat{k}}$ satisfies the specified condition, we can apply Theorem \ref{Thm4}. This allows us to select initial values $[u_0, v_0]$ where $v_0$ is significantly smaller than $u_0$ point wise, resulting in the convergence $(\widehat{u}, \widehat{v})$ towards $(1,0)$.

Furthermore, for the reaction-diffusion system given by Equation \eqref{eq:lower}, due to the inequality $\mathbf{\widetilde{k}} > \mathbf{\widehat{k}}$, the parameter $\mathbf{\widetilde{k}}$ also adheres to the imposed conditions. Consequently, Theorem \ref{Thm4} is applicable again, leading to the conclusion that for the same initial values $[u_0, v_0]$ with $v_0$ much smaller than $u_0$ point wise, the system $(\widetilde{u}, \widetilde{v})$ converges to  $(1,0)$.

Moreover, employing Lemma \ref{lem:com1}, we establish the relation $\widetilde{v} \leq v \leq \widehat{v}$. This implies:

\begin{align*}
	\lim_{t \rightarrow \infty}(\widetilde{u}, \widetilde{v})	 \leq  \lim_{t \rightarrow \infty} (u,v)    \leq \lim_{t \rightarrow \infty} (\widehat{u},\widehat{v}),
\end{align*}

and consequently:

\begin{align*}
	\left(1,0\right)	 \leq  \lim_{t \rightarrow \infty}   (u,v)    \leq 	(1,0).
\end{align*}

By employing a squeezing argument, as $t$ tends towards infinity, for initial data $[u_0,v_0]$, we can deduce the uniform convergence of solutions for the Equation \eqref{PDEmodel}. This leads to the assertion that:

\[ (u,v) \to (1,0)\] 

as $t$ approaches infinity.
\end{proof}

\begin{figure}[h]
\subfigure[$k(x)=3+sin^2 (10x)$]
{\scalebox{0.45}[0.45]{
\includegraphics[width=\linewidth,height=5in]{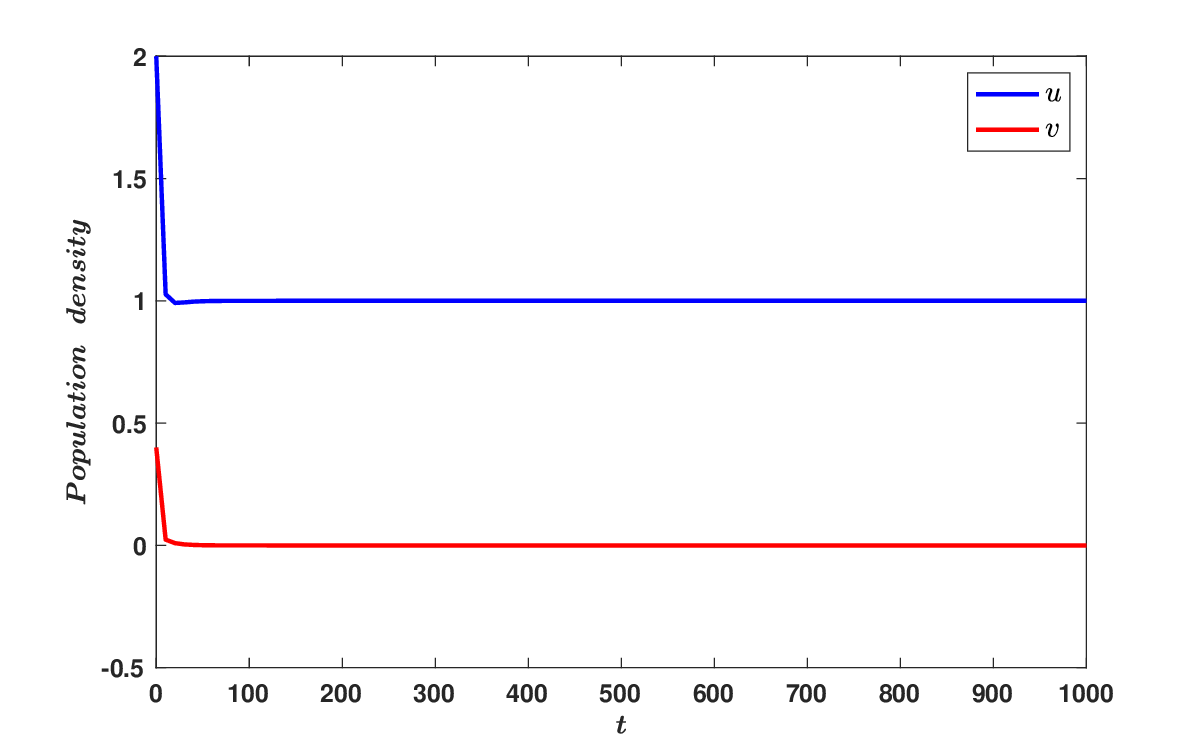}}}
\subfigure[$k(x)=4+sin^2 (10x)$]
{\scalebox{0.45}[0.45]{
\includegraphics[width=\linewidth,height=4.9in]{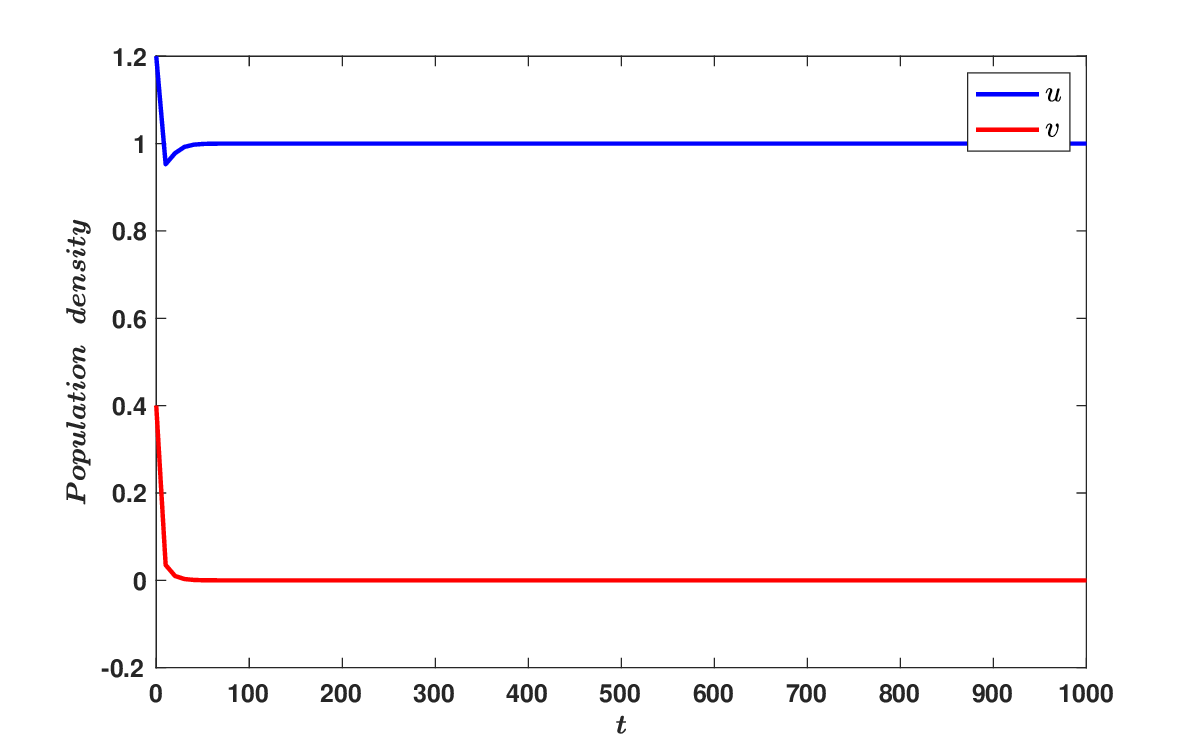}}}
\caption{Numerical simulation of \eqref{PDEmodel} for the case of competition exclusion in $\Omega=[0,\pi]$. The parameters are chosen as
$d_1 = 1, d_2 = 1, a = 0.3, b = 0.2, c=1.1$ and
$m=0.15.$ The initial data are chosen (a) $[u_0, v_0] = [2, 0.4]$ (b) $[u_0, v_0] = [1.2, 0.4]$.}
\label{fig:CE1}
\end{figure}

\begin{theorem}\label{thm_CE2}
For the reaction diffusion system (\ref{PDEmodel}) of Allelopathic Phytoplankton with a fear function $k(x)$, and $c>1$,
then there exits some flat initial data such that solution $(u,v)$ to (\ref{PDEmodel})
 converges uniformly to the spatially homogeneous state $(0,1)$ as $t \to \infty$.
\end{theorem}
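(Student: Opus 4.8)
The plan is to mimic the squeezing strategy used in the proof of Theorem~\ref{thm_CE1}, but now sandwiching the \emph{first} component $u$ rather than the second, and exploiting the hypothesis $c>1$ which is precisely the condition under which $E_2=(0,1)$ is a stable node for the kinetic system (Theorem~\ref{Thm4}, part~3(a)). First I would set up the comparison systems \eqref{eq:upper} and \eqref{eq:lower} built from the constant fear levels $\mathbf{\widehat{k}}=\min_{x\in\Omega}k(x)$ and $\mathbf{\widetilde{k}}=\max_{x\in\Omega}k(x)$, and invoke Lemma~\ref{lem:com1} to get the pointwise bracketing $\widetilde{v}\le v\le\widehat{v}$; combined with the $u$-equation this gives a corresponding bracketing of $u$ between the $u$-components of the two constant-fear PDE systems once one observes that a larger $v$ drives a smaller $u$ through the $-bcuv$ term.

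Next I would analyze the two scalar-fear PDE systems \eqref{eq:upper} and \eqref{eq:lower} individually. Each of them is a reaction-diffusion system whose kinetics is exactly system~\eqref{2} with a constant fear coefficient (equal to $\mathbf{\widehat{k}}$ and $\mathbf{\widetilde{k}}$ respectively). Since $c>1$ is independent of the value of the fear parameter, Theorem~\ref{Thm4} guarantees that $E_2=(0,1)$ is a locally asymptotically stable node for \emph{both} kinetic systems. Invoking Theorem~\ref{thm:km1}, flat initial data make the PDE dynamics coincide with the ODE dynamics, so for suitably chosen flat initial data $[u_0,v_0]$ with $u_0$ taken small relative to $v_0$ pointwise (inside the basin of attraction of $E_2$), both $(\widehat{u},\widehat{v})$ and $(\widetilde{u},\widetilde{v})$ converge uniformly to $(0,1)$ as $t\to\infty$.

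Finally I would close the argument by the squeeze: from $\widetilde{v}\le v\le\widehat{v}$ and the induced ordering on the $u$-components, passing to the limit gives
\[
(0,1)\ \le\ \lim_{t\to\infty}(u,v)\ \le\ (0,1),
\]
hence $(u,v)\to(0,1)$ uniformly. I expect the main obstacle to be the bookkeeping needed to justify that the same flat initial datum $[u_0,v_0]$ lies simultaneously in the basin of attraction of $E_2$ for \emph{both} constant-fear kinetic systems; one must check that shrinking $u_0$ relative to $v_0$ is a basin condition that is uniform in the fear parameter over the compact range $[\mathbf{\widehat{k}},\mathbf{\widetilde{k}}]$, and that the ordering of the $u$-components really does follow from the ordering of the $v$-components via a standard comparison argument for the (now linear in $u$, given $v$) first equation. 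Everything else is a direct transcription of the proof of Theorem~\ref{thm_CE1}.
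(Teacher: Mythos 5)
Your proposal follows essentially the same route as the paper's own proof: bracket the solution between the constant-fear comparison systems \eqref{eq:upper} and \eqref{eq:lower}, observe that $c>1$ makes $E_2=(0,1)$ attracting for both kinetic systems regardless of the fear level (Theorem \ref{Thm4}), pick flat data with $u_0$ small relative to $v_0$, and squeeze via Lemma \ref{lem:com1}. Your added care about transferring the $v$-bracketing to the $u$-component and about the basin condition holding uniformly over $[\mathbf{\widehat{k}},\mathbf{\widetilde{k}}]$ is a reasonable tightening of details the paper leaves implicit, but it is not a different argument.
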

\begin{proof}
Consider the reaction-diffusion system \eqref{eq:upper}. Since $c>1$ satisfies the parametric restriction, from  Theorem \ref{Thm4}, we can pick some initial data $[u_1,v_1] (u_1 \ll v_1$ pointwise) such that
	\[ (\widehat{u},\widehat{v}) \to (0,1).\]
    Similarly consider the reaction-diffusion system \eqref{eq:lower}, from Theorem \ref{Thm4}, for same set of initial data $[u_1,v_1] (u_1 \ll v_1 pointwise)$, we have 
	\[ (\widetilde{u},\widetilde{v}) \to (0,1).\]
	Moreover, on using Lemma \ref{lem:com1} we have,
	\[ \widetilde{v}	 \leq   v    \leq \widehat{v},\]
	which entails,
	\begin{align*}
		\lim_{t \rightarrow \infty}(\widetilde{u}, \widetilde{v})	 \leq  \lim_{t \rightarrow \infty} (u,v)    \leq \lim_{t \rightarrow \infty} (\widehat{u},\widehat{v}),
	\end{align*}
	subsequently,
	\begin{align*}
		\left(0,1\right)	 \leq  \lim_{t \rightarrow \infty}   (u,v)    \leq 	(0,1).
	\end{align*}
	Now using a squeezing argument, in the limit that $t \rightarrow \infty$, for initial data $[u_1,v_1] (u_1 \ll v_1 pointwise)$, we have uniform convergence of solutions of \eqref{PDEmodel}, i.e.,
	\[ (u,v) \to (0,1)\] as $t \rightarrow \infty$.
\end{proof}

\begin{figure}[h]
\subfigure[$k(x)=1.5+sin^2 (10x)$]
{\scalebox{0.45}[0.45]{
\includegraphics[width=\linewidth,height=5in]{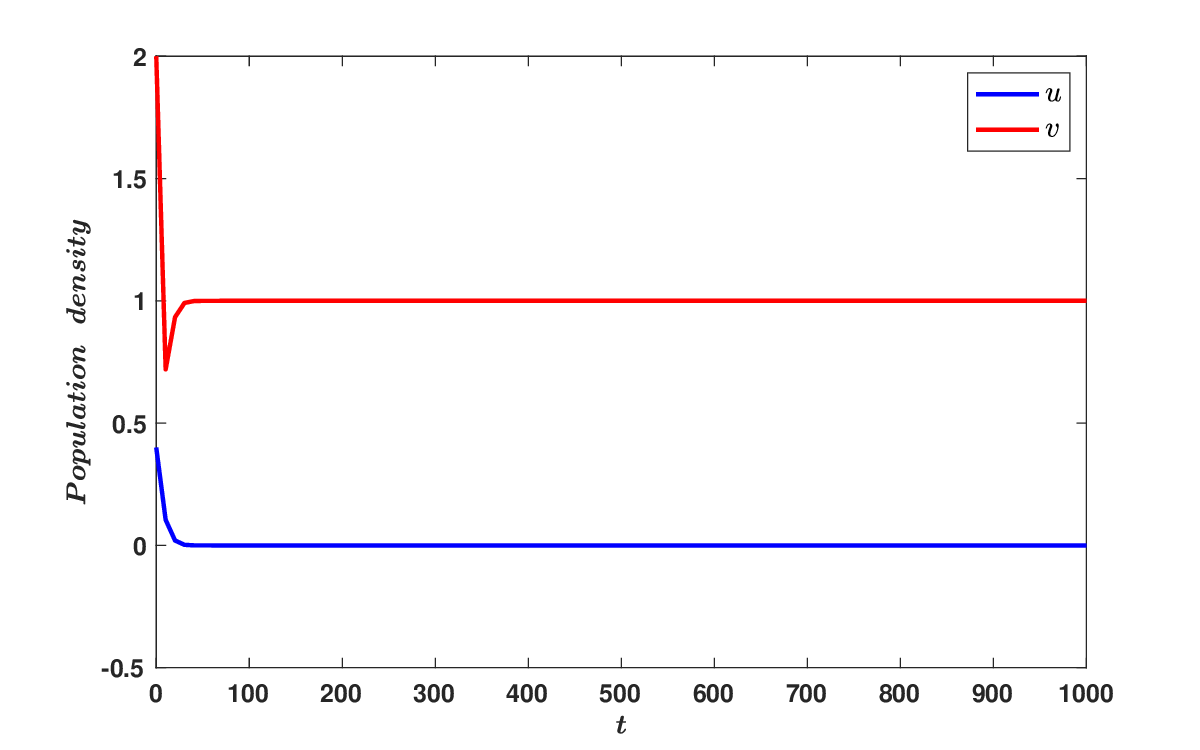}}}
\subfigure[$k(x)=2+sin^2 (10x)$]
{\scalebox{0.45}[0.45]{
\includegraphics[width=\linewidth,height=4.9in]{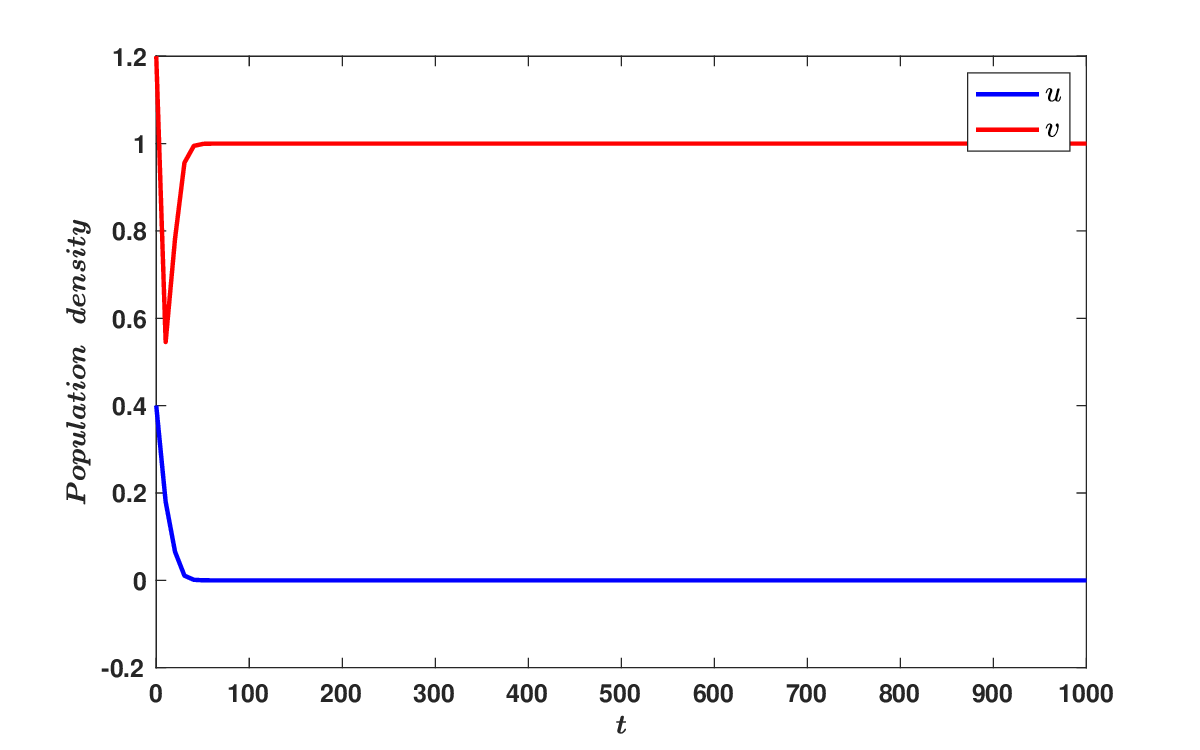}}}
\caption{Numerical simulation of \eqref{PDEmodel} for the case of competition exclusion in $\Omega=[0,\pi]$. The parameters are chosen as
$d_1 = 1, d_2 = 1, a = 0.4, b = 0.2, c=2.1$ and
$m=0.4.$ The initial data are chosen (a) $[u_0, v_0] = [0.4,2]$ (b) $[u_0, v_0] = [0.4,1.2]$.}
\label{fig:CE2}
\end{figure}

\begin{remark}
We see that via theorems \ref{thm_CE1} $\&$ \ref{thm_CE2}, that attraction to boundary equilibrium is possible for certain initial data. For other (positive) initial data, depending on parametric restrictions, one could have attraction to an interior state as well. 
    \end{remark}

\subsection{A case of strong competition}
\begin{theorem}\label{thm_SComp}
    For the reaction-diffusion system (\ref{PDEmodel}) of Allelopathic Phytoplankton with a fear function $k(x)$ that satisfies the parametric restriction
    \[ m>1-ac-\mathbf{k},\quad c>1,\quad u(E)<0, \quad m>\dfrac{2ac\mathbf{k}+ac-\mathbf{k}+1}{1+\mathbf{k}}, \quad \mathbf{k}\ge\dfrac{1}{a}-1,\]
    for $\mathbf{k}=\mathbf{\widehat{k}},   \mathbf{\widetilde{k} },$ and $u$ is a cubic polynomial given by \eqref{16}.
Then there exists sufficiently small initial data $[u_0(x),v_0(x)]$ $(v_0(x)<<u_0(x)$ pointwise$)$,  such that the solution $(u,v)$ to (\ref{PDEmodel}) converges uniformly to the spatially homogeneous state $(1,0)$ as $t \to \infty$, while there exits also sufficiently large intial data $[u_1 (x),v_1 (x)]$ $(u_1(x)<<v_1(x)$ pointwise$)$ for which the solution $(u,v)$ to (\ref{PDEmodel}) converges uniformly to the spatially homogeneous state $(0,1)$ as $t \to \infty$.
\end{theorem}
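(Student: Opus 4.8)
The plan is to deploy the comparison--squeezing scheme already used for Theorems \ref{thm_CE1}--\ref{thm_CE2}, but now over the \emph{bistable} parameter regime, and then to bootstrap the first-species equation. First I would note that the stated restrictions force, for each constant $\mathbf{k}\in\{\mathbf{\widehat{k}},\mathbf{\widetilde{k}}\}$, the kinetic ODE of \eqref{eq:upper} (respectively \eqref{eq:lower}) into strong-competition form: by Theorem \ref{Thm4}, $E_1=(1,0)$ is a hyperbolic stable node since $\mathbf{k}>\frac{1}{a}-1=k^*$, and $E_2=(0,1)$ is a hyperbolic stable node since $c>1$; and by Theorem \ref{Thm5} with Table 1 and the conditions $m>1-ac-\mathbf{k}$, $m>\frac{2ac\mathbf{k}+ac-\mathbf{k}+1}{1+\mathbf{k}}$, $u(E)<0$, there is a unique interior equilibrium $E_{1*}$, which is a saddle. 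Hence for both constants the kinetic system is bistable, with the stable manifold of $E_{1*}$ separating the basin $\mathcal{B}(1,0)$ from $\mathcal{B}(0,1)$; in particular $(1,0)$ and $(0,1)$ are hyperbolic attractors for \emph{both} the $\mathbf{\widehat{k}}$- and the $\mathbf{\widetilde{k}}$-system, so there is a common neighbourhood $N_0$ of $(1,0)$ and a common neighbourhood $N_1$ of $(0,1)$ lying in the relevant basin for each.

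For the first claim I would pick flat initial data $[u_0,v_0]\in N_0$ with $v_0$ small, so that $v_0\ll u_0$ pointwise, and invoke Theorem \ref{thm:km1} to identify the flat-data solutions $(\widehat{u},\widehat{v})$ of \eqref{eq:upper} and $(\widetilde{u},\widetilde{v})$ of \eqref{eq:lower} with the corresponding kinetic ODE orbits; both then converge uniformly to $(1,0)$ by Theorem \ref{Thm4}. Applying Theorem \ref{lem:com1} to squeeze $\widetilde{v}\le v\le\widehat{v}$ yields $v(\cdot,t)\to 0$ uniformly on $\Omega$. To recover $u\to 1$ I would then run a secondary scalar comparison: once $v\le\varepsilon$, the $u$-equation is sandwiched between the logistic problems $u_t=d_1\Delta u+bu((1-c\varepsilon)-u)$ and $u_t=d_1\Delta u+bu(1-u)$, so (using $u>0$ for $t>0$ by the strong maximum principle) $1-c\varepsilon\le\liminf_{t\to\infty}u\le\limsup_{t\to\infty}u\le 1$; letting $\varepsilon\to 0$ gives $(u,v)\to(1,0)$ uniformly.

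The second claim is symmetric: I would choose flat data $[u_1,v_1]\in N_1$ with $u_1$ small, so $u_1\ll v_1$ pointwise, obtain $(\widehat{u},\widehat{v}),(\widetilde{u},\widetilde{v})\to(0,1)$ from Theorems \ref{thm:km1} and \ref{Thm4}, and conclude $v(\cdot,t)\to 1$ uniformly by the squeeze $\widetilde{v}\le v\le\widehat{v}$ of Theorem \ref{lem:com1}. Then, since $c>1$, I would fix $\varepsilon>0$ with $c(1-\varepsilon)>1$; once $v\ge 1-\varepsilon$ one has $1-u-cv\le 1-c(1-\varepsilon)<0$, so $u_t\le d_1\Delta u-b(c(1-\varepsilon)-1)u$, and comparison with the linear equation $w_t=d_1\Delta w-b(c(1-\varepsilon)-1)w$, which decays exponentially, forces $u(\cdot,t)\to 0$ uniformly; hence $(u,v)\to(0,1)$.

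The hard part will not be the comparison machinery, which is by now routine, but the genuine \emph{bistability} of the two sandwiching kinetic systems (in contrast to Theorems \ref{thm_CE1}--\ref{thm_CE2}, where a single boundary state dominates): neither $(1,0)$ nor $(0,1)$ is a global attractor, so the conclusion is necessarily initial-data dependent, and one must exhibit flat data lying simultaneously in the \emph{same} basin for both the $\mathbf{\widehat{k}}$- and the $\mathbf{\widetilde{k}}$-system --- which is what forces the smallness/largeness hypotheses on $[u_0,v_0]$ and $[u_1,v_1]$. I would handle this softly, via hyperbolicity of the two boundary attractors (which supplies $N_0$ and $N_1$), and flag that a quantitative description of the admissible flat data would require resolving the stable manifold of $E_{1*}$, which is not needed here. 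A minor loose end is the borderline value $\mathbf{\widehat{k}}=k^*$, where $E_1$ is an attracting saddle--node rather than a node; there the same conclusion follows from the saddle--node attraction described in Theorem \ref{Thm4}.
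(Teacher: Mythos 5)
Your proposal is correct and rests on the same comparison--squeeze machinery as the paper's proof: sandwich \eqref{PDEmodel} between the constant-fear systems \eqref{eq:upper} and \eqref{eq:lower}, identify flat-data solutions with kinetic ODE orbits, exploit the strong-competition bistability of those ODEs, and squeeze via Theorem \ref{lem:com1}. You differ in two places, both worth recording. First, the paper locates the admissible initial data geometrically: it invokes the stable manifold theorem for the interior saddles $E_{1*}$ (for $\mathbf{\widehat{k}}$) and $E_{1**}$ (for $\mathbf{\widetilde{k}}$), orders their $v$-components, and uses the $\mathcal{C}^1$ property of the two separatrices to produce a wedge $\mathbb{V}$ emanating from $E_{1*}$ inside which $W^{1}_{s}(E_{1*})\le W_{s}(E_{1**})$; data chosen below both separatrices then lies simultaneously in the basin of $(1,0)$ for both sandwiching systems (and symmetrically for $(0,1)$). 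You instead take data in a small common neighbourhood of the hyperbolic attractor $(1,0)$ (resp.\ $(0,1)$). Both devices solve the same problem --- exhibiting flat data in the \emph{same} basin for the $\mathbf{\widehat{k}}$- and $\mathbf{\widetilde{k}}$-systems --- but the paper's wedge gives a larger, more explicit admissible set, while your route is softer and avoids resolving the separatrices at all. Second, and this is a genuine improvement on the paper: Theorem \ref{lem:com1} only yields the one-sided chain $\widetilde{v}\le v\le \widehat{v}$, yet the paper's displayed squeeze concludes convergence of the full pair $(u,v)$ without ever bounding $u$; your secondary scalar comparison for the $u$-equation (logistic sandwich once $v\le\varepsilon$, and exponential decay once $v\ge 1-\varepsilon$ using $c>1$) closes that gap cleanly. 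Your remark on the borderline $\mathbf{\widehat{k}}=\frac{1}{a}-1$, where $E_1$ is only a saddle-node and attraction from the first quadrant depends on the position of the parabolic sector (i.e.\ on the sign of $m-m^*$ in Theorem \ref{Thm4}), flags a case the paper's proof passes over silently; it deserves the extra sentence you give it.
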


\begin{proof}
	Consider the reaction-diffusion system \eqref{eq:upper}. Since the $\mathbf{\widehat{k}}$ satisfies the parametric restriction, from Theorem~\ref{Thm5} , there exists a interior saddle equilibrium $E_{1*}$ to the kinetic (ODE) system \eqref{eq:upper}. On making use of the stable manifold theorem \cite{Perko13}, i.e., $\exists \hspace{0.1in} W^1_{s}(E_{1*}) \in \mathcal{C}^{1}$ separatrix, such that for initial data $(\widehat{u}_0,\widehat{v}_0)$ chosen right to $W^1_{s}(E_{1*})$ the solution $(\widehat{u},\widehat{v}) \to (1,0)$ and for initial data chosen left to $W^1_{s}(E_{1*})$, $(\widehat{u},\widehat{v}) \to (0,1)$.

	Moreover, notice that $\dfrac{1}{1 + \mathbf{\widetilde{k}} u} \le \dfrac{1}{1 + \mathbf{\widehat{k}} u}$, we have that for the kinetic (ODE) system \eqref{eq:lower}, we still remain in the strong competition case, and via standard theory again, $\exists \hspace{0.1in} W_{s}(E_{1**}) \in \mathcal{C}^{1}$ separatrix, such that for initial data $(\widetilde{u}_0,\widetilde{v}_0)$ chosen left to $W_{s}(E_{1**})$ the solution $(\widetilde{u},\widetilde{v}) \to (0,1)$ and for initial data chosen right to $W_{s}(E_{1**})$, $(\widetilde{u},\widetilde{v})\to (1,0)$. Here $E_{1**}$ is the interior saddle equilibrium to the kinetic (ODE) system for \eqref{eq:lower}.
	
	Now since $\dfrac{1}{1 + \mathbf{\widetilde{k}} u} \le \dfrac{1}{1 + \mathbf{\widehat{k}} u}$, the $v$ component of $E_{1**}$ is more than the $v$ component of $E_{1*}$. Now using the $\mathcal{C}^{1}$ property of the separatricies $W^{1}_{s}(E_{1*}) , W_{s}(E_{1**})$, we have the existence of a wedge $\mathbb{V}$ emanating from $E_{1*}$, s.t within $\mathbb{V}$ we have $W^{1}_{s}(E_{1*}) \leq W_{s}(E_{1**})$. Note via Lemma \ref{lem:com1} we have $ \widetilde{v} \leq v \leq \widehat{v}$.	Let us consider positive initial data $(u_0,v_0)$ chosen small enough, within $\mathbb{V}$ s.t. $  (u_0,v_0) < W^{1}_{s}(E_{1*})  \le W_{s}(E_{1**})$, we will have 
	\begin{align*}
		\Big\{  (1,0) \Big\} \le \Big\{ (u,v) \Big\} \le \Big\{ (1,0) \Big\}.
	\end{align*}
	On the other hand,  for sufficiently large initial data $(u_1,v_1)$ via an analogous construction we will have 
	\begin{align*}
		\Big\{  (0,1)\Big\} \le \Big\{ (u,v) \Big\} \le \Big\{ (0,1) \Big\}.
	\end{align*}
	This proves the theorem.
\end{proof}

\begin{figure}[h]
\subfigure[$k(x)=4+sin^2 (10x)$]
{\scalebox{0.45}[0.45]{
\includegraphics[width=\linewidth,height=5in]{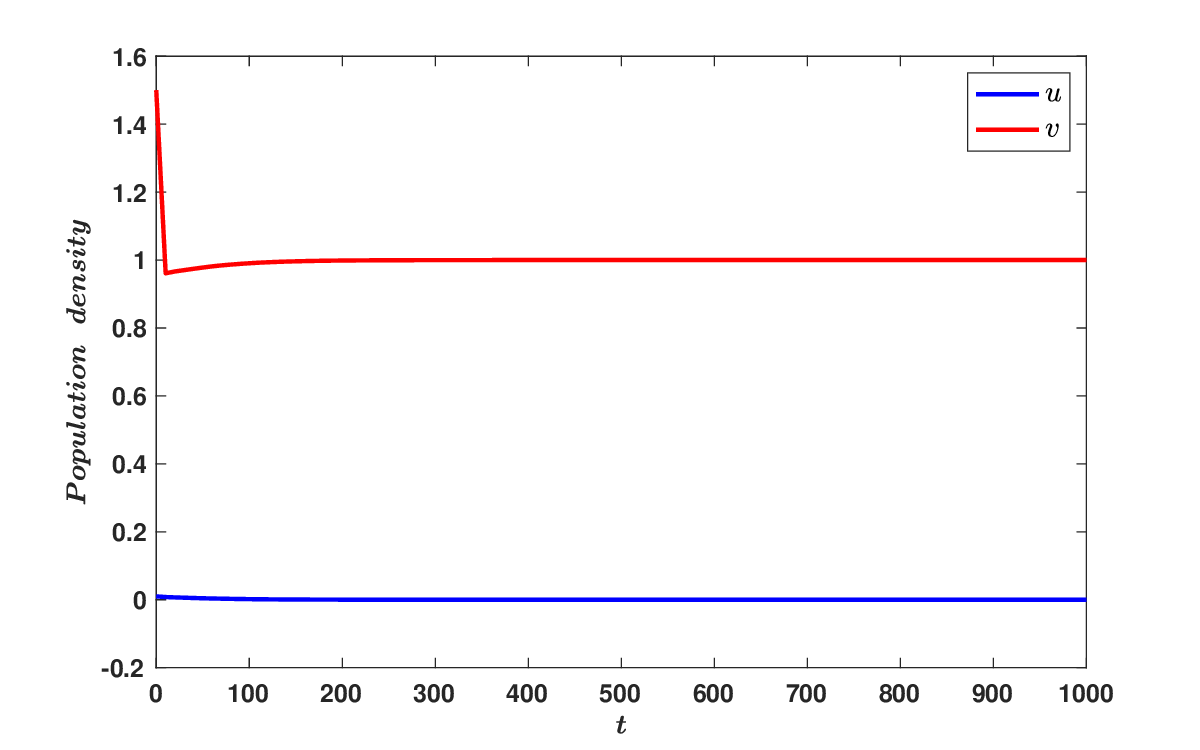}}}
\subfigure[$k(x)=4+sin^2 (10x)$]
{\scalebox{0.45}[0.45]{
\includegraphics[width=\linewidth,height=4.9in]{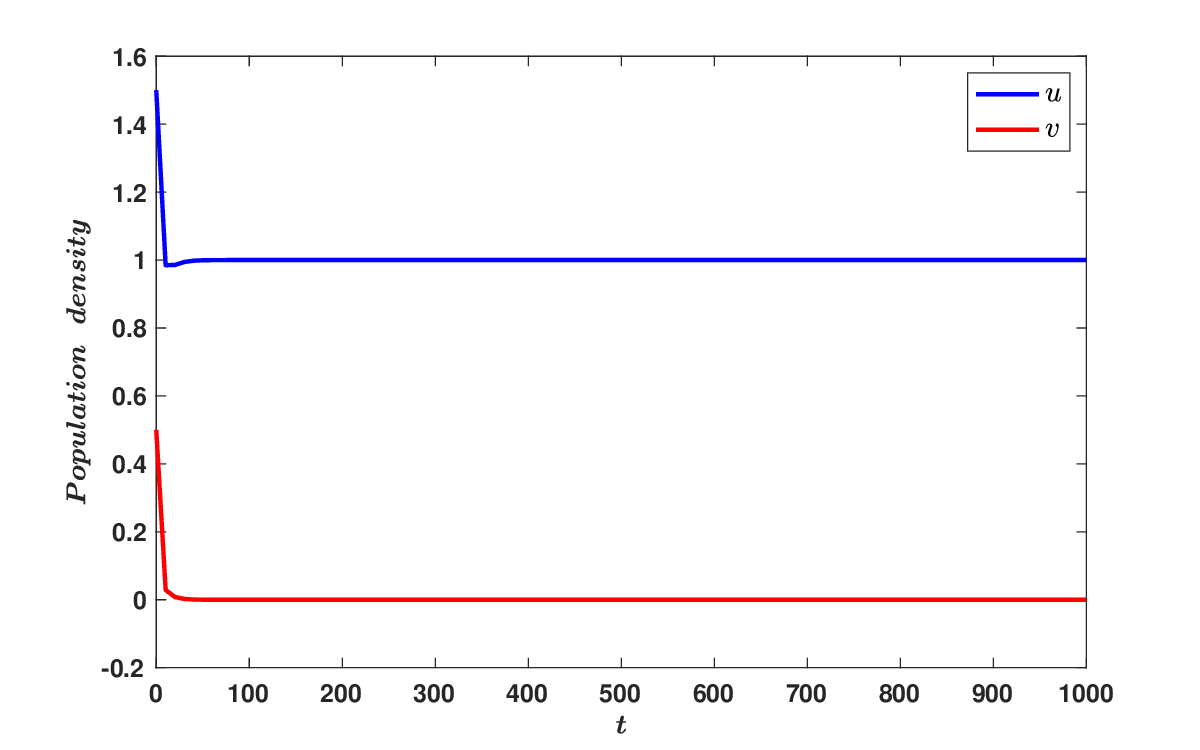}}}
\caption{Numerical simulation of \eqref{PDEmodel} for the case of strong comp in $\Omega=[0,\pi]$. The parameters are chosen as
$d_1 = 1, d_2 = 1, a = 0.2, b = 0.2, c=1.1$ and
$m=0.15.$ The initial data are chosen (a) $[u_0, v_0] = [0.01,1.5]$ (b) $[u_0, v_0] = [1.5,0.5]$.}
\label{fig:SComp}
\end{figure}

\begin{figure}[h]
\subfigure[]
{\scalebox{0.45}[0.45]{
\includegraphics[width=\linewidth,height=5in]{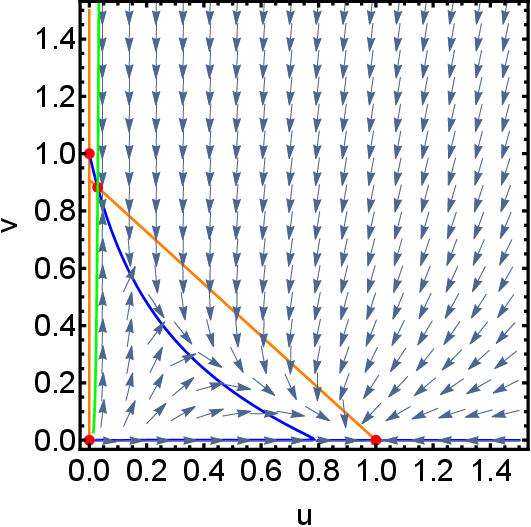}}}
\subfigure[]
{\scalebox{0.45}[0.45]{
\includegraphics[width=\linewidth,height=4.9in]{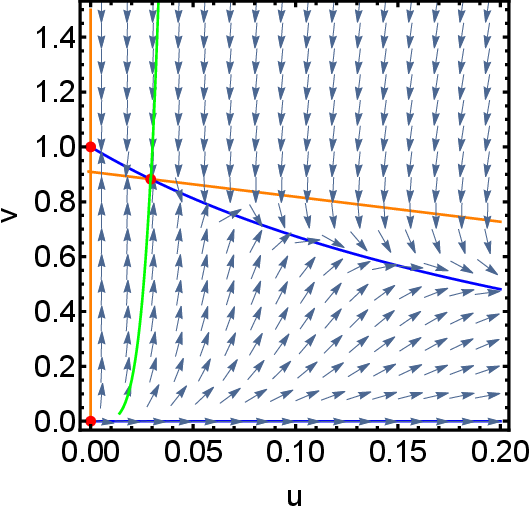}}}
\caption{Phase plots showing various dynamics under strong competition parametric restrictions. The parameters are chosen as
$a = 0.2, b = 0.2, c=1.1$ and
$m=0.15.$ }
\label{fig:ODE1}
\end{figure}

\begin{figure}[h]
\begin{center}
\subfigure[]
  {\scalebox{0.5}[0.5]{
\includegraphics[width=\linewidth,height=5in]{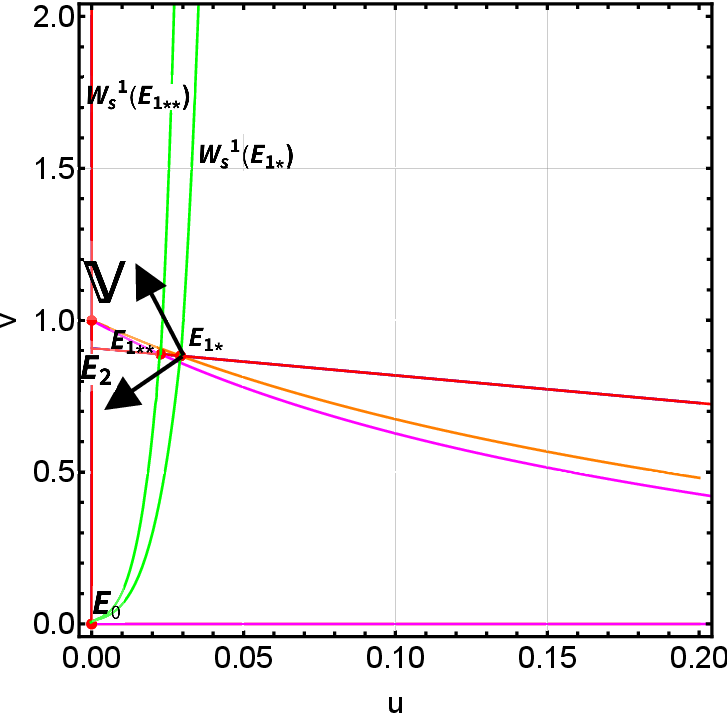}}}  
\end{center}
\caption{Numerical simulation for the reaction diffusion system (\ref{PDEmodel}) of Allelopathic Phytoplankton with a fear function $k(x)$ in $\Omega=[0,\pi]$. The parameters are chosen as $d_1 = 1, d_2 = 1, a = 0.2, b = 0.2, c=1.1$ and
$m=0.15.$ Equilibria: $E_{1*}=(0.029,0.882),E_{1**}=(0.022,0.888), E_{2}=(0,1), E_1=(1,0)$ and $E_0=(0,0)$. $W_{s}^{1}(E_{1*})$ ($k=4$) and $W_{s} (E_{1**)}$ ($k=5$) are two sepratrices passing through $E_{1*}$ and  $E_{1**}$ respectively. The $\mathcal{C}^1$ property of the separatrices, $W^{1}_{s}(E_{1*}) , W_{s}(E_{1**})$, shows a wedge $\mathbb{V}$ emanating from $E_{1*}$, such that within $\mathbb{V}$ we have $W^{1}_{s}(E_{1*})\le W_{s}(E_{1**})$. The $u$-nullcline is in red for $k=4$ and $k=5$. For 
$k=4$, $v$-nullcline is in orange. For $k=5$, $v$-nullcline is in magenta.}
\label{fig:ODE2}
\end{figure}

\subsection{The weak competition case}

The Theorems \ref{Thm5} and \ref{Thm:7}, along with numerical simulations (Fig~\ref{fig:WC}) motivate the following conjecture:
\begin{conjecture}\label{conj:WC}
    For the reaction-diffusion system (\ref{PDEmodel}) of Allelopathic Phytoplankton with a fear function $k(x)$ that satisfies the parametric restriction
    \[m>1-ac-\mathbf{k}, \quad 0<c<1, \quad 0<\mathbf{k}<\frac{1}{a}-1,\]
    for $\mathbf{k}=\mathbf{\widehat{k}},   \mathbf{\widetilde{k} }.$
Then for any positive set of initial data $[u_0(x),v_0(x)]$, the solution $(u,v)$ to (\ref{PDEmodel}) converges uniformly to the spatially homogeneous state $(u^*,v^*)$ as $t \to \infty$.
\end{conjecture}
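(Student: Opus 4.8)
The plan is to adapt the comparison strategy of Theorems~\ref{thm_CE1}--\ref{thm_SComp}, but now exploiting that \emph{both} sandwiching constant-fear systems fall in the weak competition regime. First I would strengthen Lemma~\ref{lem:com1} to a two-sided estimate on the full state. From $\widetilde{v}\le v\le\widehat{v}$ and the fact that a larger second species depresses the growth of the first (so $u$ is a supersolution of the $\widehat{u}$-equation and a subsolution of the $\widetilde{u}$-equation), one gets $\widehat{u}\le u\le\widetilde{u}$ as well; equivalently, in the competition ordering,
\[ \big(\widehat{u},\,\widehat{v}\big)\ \preceq\ (u,v)\ \preceq\ \big(\widetilde{u},\,\widetilde{v}\big), \]
where $(\widehat{u},\widehat{v})$ and $(\widetilde{u},\widetilde{v})$ solve the constant-fear reaction--diffusion systems \eqref{eq:upper} and \eqref{eq:lower} with $\mathbf{\widehat{k}}=\min_\Omega k$ and $\mathbf{\widetilde{k}}=\max_\Omega k$. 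The hypotheses $m>1-ac-\mathbf{k}$, $0<c<1$, $0<\mathbf{k}<\frac{1}{a}-1$ for $\mathbf{k}=\mathbf{\widehat{k}},\mathbf{\widetilde{k}}$ are precisely the conditions of Theorems~\ref{Thm5} and~\ref{Thm:7} for each of these two kinetic ODE systems, so each has a unique interior equilibrium which, by the nonexistence of limit cycles proved above together with Theorem~\ref{thm:km1}, is globally attracting for the PDE at least for flat data.

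Second, I would upgrade this ODE-level conclusion to PDE-level global stability of \eqref{eq:upper} and \eqref{eq:lower} for \emph{arbitrary} positive initial data. The route I would take is: (i) observe that each constant-fear system generates an order-preserving semiflow on $\mathbb{C}(\overline{\Omega})^2$ for the competition cone; (ii) note it is point-dissipative with a compact global attractor, using the $\mathbb{L}^\infty$ bounds $u\le 1$, $v\le 1$ already in hand; (iii) rule out nonconstant positive steady states by multiplying the stationary equations by suitable test functions, integrating by parts, and combining the Poincar\'e inequality with the weak-competition sign conditions to force the spatial gradients to vanish; and (iv) conclude, by the theory of monotone semiflows with a unique interior fixed point — or, equivalently, by arguments analogous to those used for the competition model with Allee and fear effects in \cite{Chen2023} — that every positive orbit of \eqref{eq:upper} (resp. \eqref{eq:lower}) converges to its unique constant coexistence state $(\widehat{u}^*,\widehat{v}^*)$ (resp. $(\widetilde{u}^*,\widetilde{v}^*)$).

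Third, I would try to close the sandwich, and this is the step I expect to be the main obstacle. The two limiting equilibria $(\widehat{u}^*,\widehat{v}^*)$ and $(\widetilde{u}^*,\widetilde{v}^*)$ coincide only when $\mathbf{\widehat{k}}=\mathbf{\widetilde{k}}$, i.e. when $k(x)$ is constant a.e.; for genuinely heterogeneous $k(x)$ the squeezing argument only traps $(u,v)$ asymptotically in the order interval between them, not at a single point, and the limiting positive steady state of \eqref{PDEmodel} is itself generally nonconstant (so the ``spatially homogeneous'' phrasing in the conjecture should be read either as the limiting constant-$k$ assertion or as the observed flattening when $\max_\Omega k-\min_\Omega k$ is small). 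To get genuine convergence one must work directly with the heterogeneous operator: show that \eqref{PDEmodel} defines a monotone semiflow possessing a \emph{unique} positive steady state $(u^*,v^*)$ and that it globally attracts. Uniqueness of the positive steady state is the crux, and the fear nonlinearity $\frac{1}{1+k(x)u}$ obstructs the usual Lotka--Volterra Lyapunov functional; one would need either a monotone iteration started from the ordered sub/supersolution pair furnished by \eqref{eq:upper}--\eqref{eq:lower} and shown to converge to a common limit, or a perturbative argument valid for small heterogeneity. Since neither ingredient is presently available in closed form, the statement is left as Conjecture~\ref{conj:WC}.
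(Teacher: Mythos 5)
The statement you are addressing is labelled a \emph{conjecture} in the paper: the authors do not prove it, they only motivate it by Theorems \ref{Thm5} and \ref{Thm:7} (existence and global stability of the unique interior equilibrium $E_{2*}$ for the kinetic system under $0<c<1$, $0<k<k^*$) together with the numerical simulations in Fig.~\ref{fig:WC}. So there is no proof in the paper to compare against, and your honest conclusion --- that the argument cannot currently be closed and the statement must remain a conjecture --- is exactly consistent with the paper's own position. Your first step (sandwiching \eqref{PDEmodel} between the constant-fear systems \eqref{eq:upper} and \eqref{eq:lower}, both of which satisfy the weak-competition hypotheses) is the same comparison framework the authors use for Theorems \ref{thm_CE1}--\ref{thm_SComp}, and you correctly identify the structural reason it fails here: in the boundary-attraction theorems both bounding systems converge to the \emph{same} state $(1,0)$ or $(0,1)$, so the squeeze closes, whereas in the weak-competition case the two bounding interior equilibria $(\widehat{u}^*,\widehat{v}^*)$ and $(\widetilde{u}^*,\widetilde{v}^*)$ differ whenever $k(x)$ is genuinely heterogeneous, so the squeeze only traps the orbit in an order interval.

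Two further cautions on the intermediate steps you present as available. First, the transfer of Theorem \ref{Thm:7} to the PDE systems \eqref{eq:upper}--\eqref{eq:lower} is justified in the paper only for spatially homogeneous initial data, via Theorem \ref{thm:km1}; your step (ii)--(iv), upgrading this to arbitrary positive initial data by ruling out nonconstant positive steady states, is not a routine integration-by-parts exercise --- the paper explicitly lists ``global stability of the interior equilibrium in the PDE case, as well as the existence of non-constant steady states'' as open future work, so that ingredient is itself unproven. Second, your extension of Theorem \ref{lem:com1} to the two-sided competitive ordering $\widehat{u}\le u\le\widetilde{u}$ is plausible for a competitive (order-preserving) system but is not established in the paper, which proves only the one-sided estimate $\widetilde{v}\le v\le\widehat{v}$; it would need its own sub/supersolution argument across the three distinct reaction terms. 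Your observation that the conjectured limit being ``spatially homogeneous'' is itself questionable for genuinely heterogeneous $k(x)$ is a fair and worthwhile criticism of the conjecture as stated.
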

\begin{figure}[h]
\subfigure[$k(x)=0.1sin^2 (10x)$]
{\scalebox{0.45}[0.45]{
\includegraphics[width=\linewidth,height=5in]{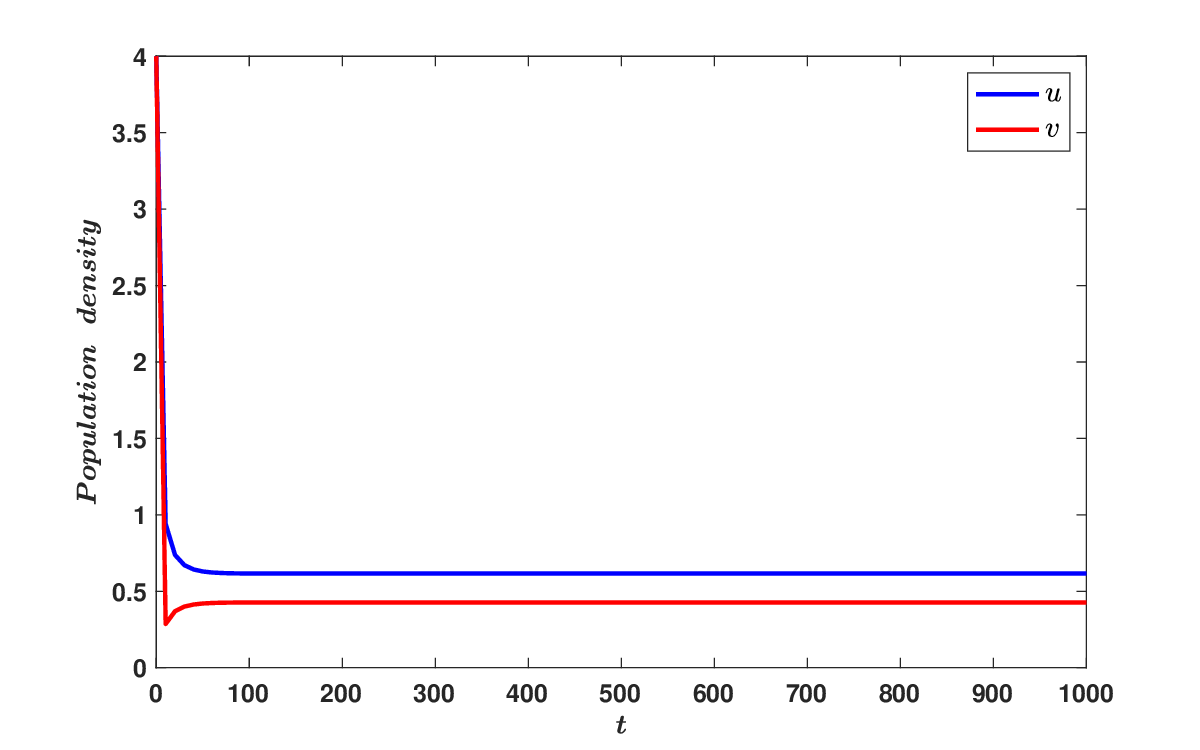}}}
\subfigure[$k(x)=0.1sin^2 (10x)$]
{\scalebox{0.45}[0.45]{
\includegraphics[width=\linewidth,height=4.9in]{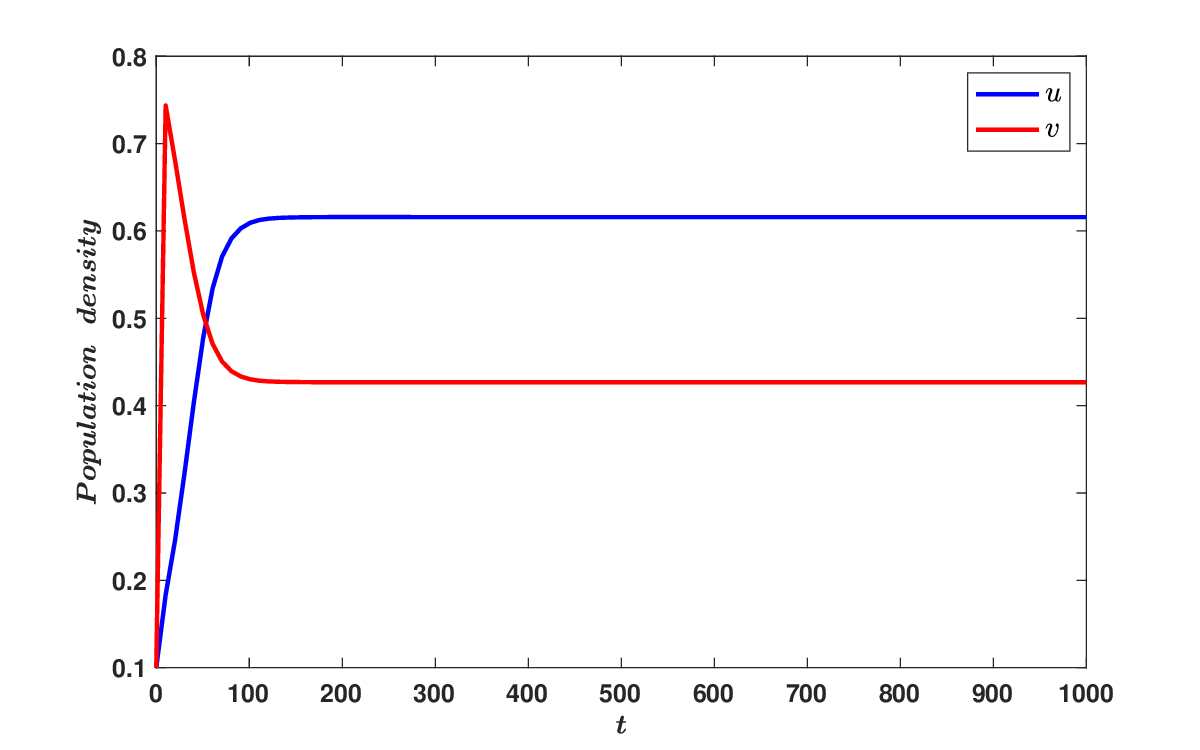}}}
\caption{Numerical simulation of \eqref{PDEmodel} for the case of weak comp in $\Omega=[0,\pi]$. The parameters are chosen as
$d_1 = 1, d_2 = 1, a = 0.2, b = 0.2, c=0.9$ and
$m=1.6.$ The initial data are chosen (a) $[u_0, v_0] = [4,4]$ (b) $[u_0, v_0] = [0.1,0.1]$.}
\label{fig:WC}
\end{figure}

\subsection{The case of multiple interiors}
The numerical simulations Fig~\ref{fig:Bi_Stab} motivate the following conjecture:
\begin{conjecture}\label{conj:Bi_stab}
    For the reaction-diffusion system (\ref{PDEmodel}) of Allelopathic Phytoplankton with a fear function $k(x)$ that satisfies the parametric restriction
    \[m>1-ac-\mathbf{k}, \quad m>\dfrac{2ac\mathbf{k}+ac-\mathbf{k}+1}{1+\mathbf{k}}, \quad u(E)<0, \quad c>1, \quad 0<\mathbf{k}<\frac{1}{a}-1,\]
    for $\mathbf{k}=\mathbf{\widehat{k}},   \mathbf{\widetilde{k} },$ and $u$ is a cubic polynomial given by \eqref{16}.
    Then there exists sufficiently small initial data $[u_0(x),v_0(x)]$ $(v_0(x)<<u_0(x)$ pointwise$)$,  such that the solution $(u,v)$ to (\ref{PDEmodel}) converges uniformly to the spatially homogeneous state $(u^*,v^*)$ as $t \to \infty$, while there exits also sufficiently large intial data $[u_1 (x),v_1 (x)]$ $(u_1(x)<<v_1(x)$ pointwise$)$ for which the solution $(u,v)$ to (\ref{PDEmodel}) converges uniformly to the spatially homogeneous state $(0,1)$ as $t \to \infty$.
\end{conjecture}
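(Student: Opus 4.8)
The plan is to reuse the comparison--and--squeeze scaffolding of Theorem~\ref{thm_SComp}, but now in the parameter window $0<\mathbf{k}<k^{*}$ with $k^{*}=\frac{1}{a}-1$. Under the stated restrictions --- which are exactly the two-interior-equilibria row of Table~1 read for both $\mathbf{k}=\mathbf{\widehat{k}}$ and $\mathbf{k}=\mathbf{\widetilde{k}}$ --- Theorem~\ref{Thm5} tells us that the kinetic (ODE) systems attached to the bounding reaction--diffusion systems \eqref{eq:upper} and \eqref{eq:lower} are \emph{bistable with an interior attractor}: the boundary state $E_{1}=(1,0)$ is a saddle with local stable manifold the $u$-axis, $E_{2}=(0,1)$ is a stable node, and in $\mathrm{Int}(\mathbb{R}^{2}_{+})$ there coexist an interior saddle $E_{1*}$ and an interior stable node $E_{2*}$. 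Since Theorem~6 excludes interior limit cycles, Poincar\'e--Bendixson forces every trajectory to an equilibrium, so each kinetic system carries a $\mathcal{C}^{1}$ separatrix (the stable manifold of its interior saddle) splitting $\mathrm{Int}(\mathbb{R}^{2}_{+})$ into the basin of $E_{2*}$ and the basin of $(0,1)$, with the unstable manifold of $(1,0)$ lying in the former. Theorem~\ref{thm:km1} transfers this phase portrait to the flat-data PDE solutions of \eqref{eq:upper}--\eqref{eq:lower}.

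\textbf{Large initial data.} For $[u_{1}(x),v_{1}(x)]$ with $u_{1}\ll v_{1}$ pointwise I would choose the data, exactly as in the wedge construction of Theorem~\ref{thm_SComp} and using the $\mathcal{C}^{1}$ regularity of the two separatrices attached to $\mathbf{\widehat{k}}$ and $\mathbf{\widetilde{k}}$, so that it lies on the $(0,1)$-side of both. Then $(\widehat{u},\widehat{v})\to(0,1)$ and $(\widetilde{u},\widetilde{v})\to(0,1)$; Lemma~\ref{lem:com1} gives $\widetilde{v}\le v\le\widehat{v}$, the $u$-component is trapped as well since the $u$-equation does not involve $k$, and the squeeze yields $(u,v)\to(0,1)$ uniformly. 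This half is verbatim as in Theorem~\ref{thm_SComp}.

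\textbf{Small initial data.} For $[u_{0}(x),v_{0}(x)]$ with $v_{0}\ll u_{0}$ pointwise --- data near the $u$-axis, hence on the $E_{2*}$-side of both separatrices --- the same comparison gives
\[
E_{2*}^{\,\mathbf{\widetilde{k}}}\ \le\ \liminf_{t\to\infty}(u,v)\ \le\ \limsup_{t\to\infty}(u,v)\ \le\ E_{2*}^{\,\mathbf{\widehat{k}}},
\]
where $E_{2*}^{\,\mathbf{\widehat{k}}}$ and $E_{2*}^{\,\mathbf{\widetilde{k}}}$ are the interior stable nodes of the two bounding kinetic systems. Because $\mathbf{\widehat{k}}\neq\mathbf{\widetilde{k}}$ these equilibria are in general \emph{distinct}, so the squeeze only confines the $\omega$-limit set to an order interval and does not isolate a single limit --- which is precisely why the assertion is stated as a conjecture.

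\textbf{The obstacle and routes around it.} Closing the small-data case needs tools beyond monotone comparison, and also a clarification of the target: for genuinely non-constant $k(x)$ there is no spatially homogeneous interior equilibrium of \eqref{PDEmodel}, so ``$(u^{*},v^{*})$'' should be read as the interior steady state $(u^{*}(x),v^{*}(x))$ of \eqref{PDEmodel} (when $k\equiv\text{const}$ it reduces to $E_{2*}$, and the claim then follows from the ODE phase portrait --- Theorems~\ref{Thm4}, \ref{Thm5} and~6 --- via Theorem~\ref{thm:km1}). The natural program for general $k(x)$ is: (i) show \eqref{PDEmodel} has a unique, non-degenerate interior steady state, e.g.\ by testing the elliptic steady-state system against suitable functions, integrating over $\Omega$, and exploiting the strictly negative Dulac quantity $\Delta(x,y)=-\frac{b}{y}+\frac{-mx-1}{x}$ from Theorem~6 together with the structural hypotheses \eqref{eq:as1} on $k(x)$; and (ii) build a Lyapunov functional for \eqref{PDEmodel}, or invoke the theory of monotone (competitive) reaction--diffusion semiflows, so that the $\omega$-limit set of the trapped small-data solution collapses onto that steady state. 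A softer fallback, valid when $\mathbf{\widetilde{k}}-\mathbf{\widehat{k}}$ is small, is a perturbation argument around the constant-$k$ problem using continuous dependence of $E_{2*}$ on $k$ and its exponential (nodal) stability. I expect step~(i)--(ii) --- ruling out non-constant interior steady states and upgrading trapping to genuine convergence in the heterogeneous setting --- to be the genuinely hard part; the comparison/squeeze scaffolding itself is routine given Theorems~\ref{Thm4}, \ref{Thm5}, \ref{thm:km1} and~\ref{thm_SComp}.
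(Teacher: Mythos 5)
This statement is left as a \emph{conjecture} in the paper: the authors offer no proof, only the numerical evidence of Fig.~\ref{fig:Bi_Stab}, so there is no ``paper proof'' to compare against line by line. Your proposal is therefore best judged on its own terms, and your assessment is essentially right. The large-data half of your argument is sound and is a verbatim transplant of the wedge-and-squeeze construction from Theorem~\ref{thm_SComp}: under the stated restrictions Table~1 (case $m>m_1$, $c>1$, $u(E)<0$, $m>m_2$, $0<\mathbf{k}<k^*$) gives two interior equilibria for each bounding kinetic system, with $(0,1)$ a stable node since $c>1$, so data chosen on the $(0,1)$ side of both separatrices is squeezed to $(0,1)$ via Lemma~\ref{lem:com1}. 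Your diagnosis of the small-data half is the genuinely valuable part: the comparison only traps $v$ between $\widetilde{v}$ and $\widehat{v}$, whose limits are the \emph{distinct} interior nodes $E_{2*}^{\,\mathbf{\widetilde{k}}}$ and $E_{2*}^{\,\mathbf{\widehat{k}}}$, so the squeeze confines the $\omega$-limit set to an order interval without collapsing it to a point --- which is precisely why the authors could not upgrade this to a theorem. Your further observation that for genuinely non-constant $k(x)$ there is no spatially homogeneous interior equilibrium, so the target $(u^*,v^*)$ in the conjecture must be reinterpreted as a (possibly non-constant) interior steady state, is a real point about the statement itself that the paper glosses over; the authors themselves defer existence and stability of non-constant steady states to future work in the conclusion.

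Two small cautions. First, your claim that ``the $u$-component is trapped as well since the $u$-equation does not involve $k$'' is not justified by Lemma~\ref{lem:com1}, which compares only the $v$-components; the $u$-components of the three systems see different $v$'s and so are not ordered by that lemma alone. (The paper's own proof of Theorem~\ref{thm_SComp} has the same looseness, so you are no worse off, but if you formalize this you should either extend the comparison to $u$ using the monotone dependence of the $u$-equation on $v$, or argue convergence of $u$ separately once $v$ is pinned down.) Second, your appeal to Poincar\'e--Bendixson plus Theorem~6 to get a clean two-basin phase portrait for each kinetic system is fine, but note the basin boundary is the stable manifold of the interior saddle $E_{1*}$, not of $(1,0)$; with $0<\mathbf{k}<k^*$ the boundary point $(1,0)$ is itself a saddle whose unstable manifold feeds into the interior attractor, which is consistent with what you wrote but worth stating precisely if this is ever written up.
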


\begin{figure}[h]
\subfigure[$k(x)=1.5+0.1sin^2 (10x)$]
{\scalebox{0.45}[0.45]{
\includegraphics[width=\linewidth,height=5in]{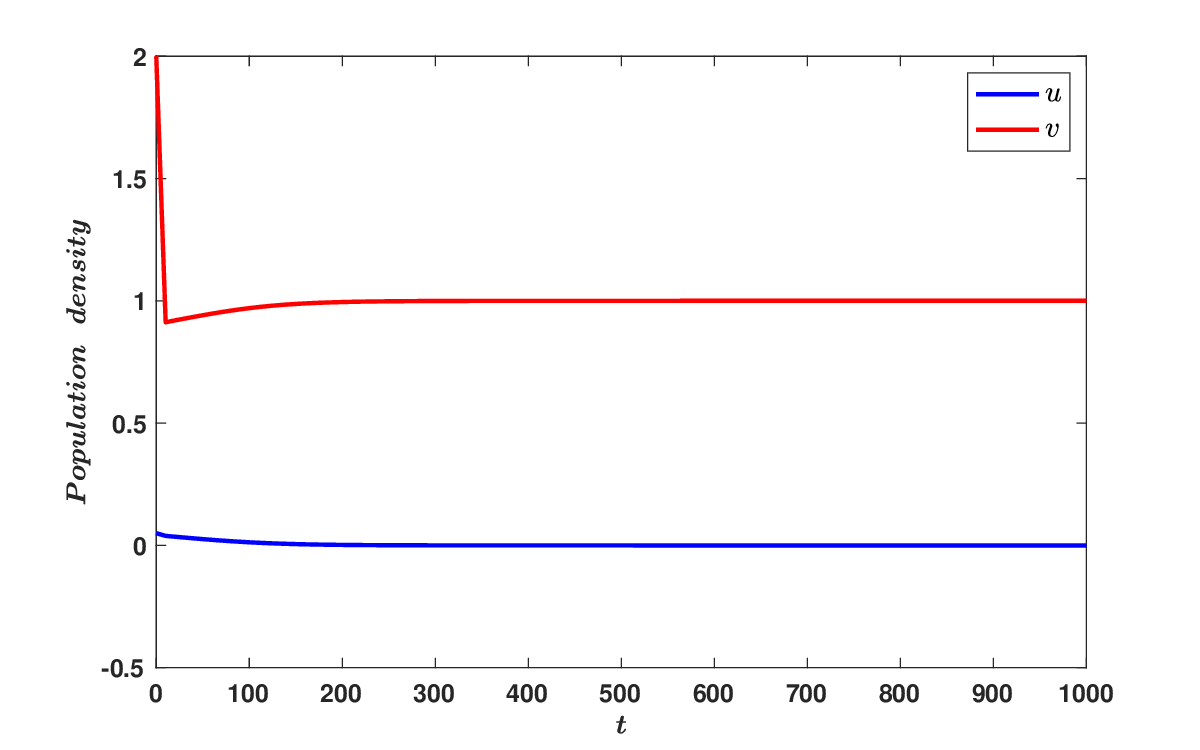}}}
\subfigure[$k(x)=1.5+0.1sin^2 (10x)$]
{\scalebox{0.45}[0.45]{
\includegraphics[width=\linewidth,height=4.9in]{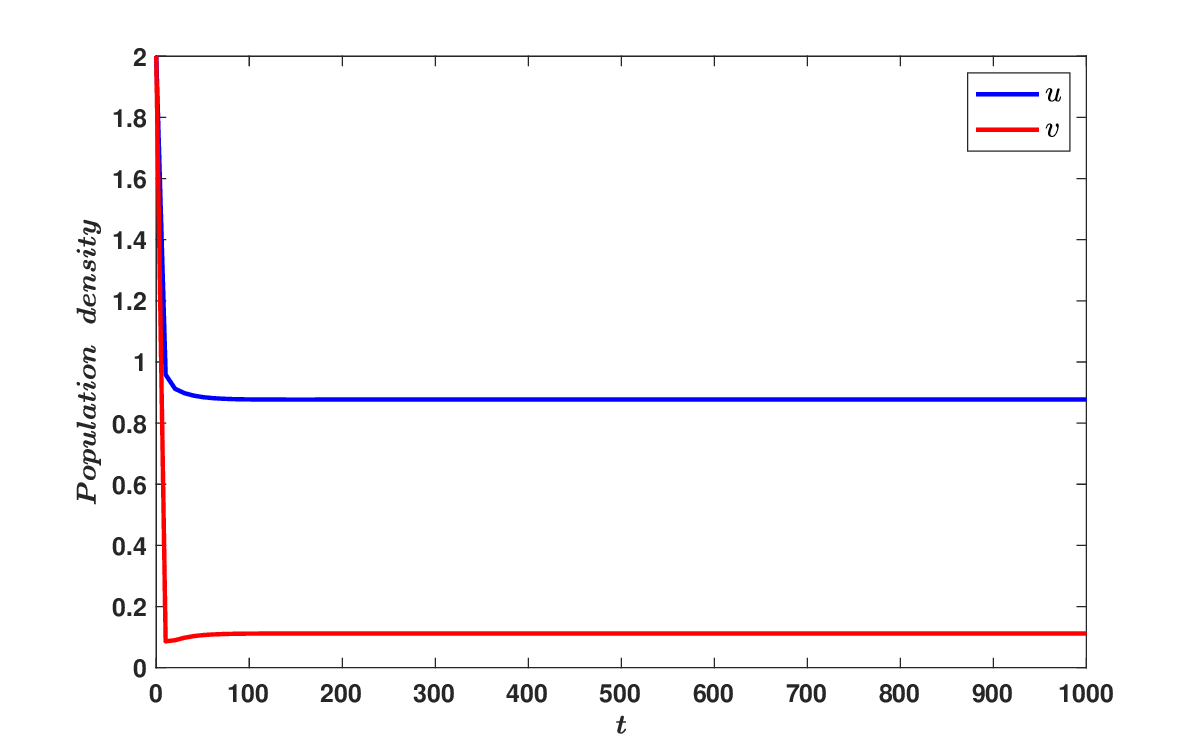}}}
\caption{Numerical simulation of \eqref{PDEmodel} for the case of bi-stability in $\Omega=[0,\pi]$. The parameters are chosen as
$d_1 = 1, d_2 = 1, a = 0.3, b = 0.2, c=1.1$ and
$m=0.5.$ The initial data are chosen (a) $[u_0, v_0] = [0.05,2]$ (b) $[u_0, v_0] = [2,2]$.}
\label{fig:Bi_Stab}
\end{figure}

\section{Numerical Simulations}
The \verb|MATLAB| R2021b software was employed to conduct a PDE simulation for a reaction-diffusion system (\ref{PDEmodel}) modeling Allelopathic Phytoplankton. This simulation considered spatially heterogeneous fear functions, denoted as $k(x)$. The solution was obtained using the pdepe function to solve 1-D initial boundary value problems in a single spatial dimension. The computational task was performed on an 8-core CPU within an Apple M1 Pro-based workstation, taking approximately $5-7$ seconds to complete when applied to the spatial domain interval $[0,\pi]$, which was divided into 1000 sub-intervals.

Our theoretical findings and conjectures, specific to the spatially explicit context, were substantiated through a time series analysis conducted over an extended duration. Simulations were executed with parameters conforming to the constraints established by the theorems. In the spatially explicit setting, we used the standard comparison theory to derive point-wise constraints on the fear function $k(x)$. This analysis observed competitive exclusion, strong competition, and multiple equilibria-type dynamics within the reaction-diffusion system featuring a spatially heterogeneous fear function.

The outcomes of Theorems \ref{thm_CE1}, \ref{thm_CE2}, \ref{thm_SComp}, and Conjectures \ref{conj:WC}, \ref{conj:Bi_stab} provided clear evidence of these phenomena. To further validate our numerical results, we utilized Figures [\ref{fig:CE1}, \ref{fig:CE2}, \ref{fig:SComp}, \ref{fig:WC}, \ref{fig:Bi_Stab}].

Theoretical results were rigorously validated through numerical experiments employing various heterogeneous fear functions. Each figure caption includes details regarding the parameters used for these simulations and their relevance to specific theorems. It is important to note that all parameter choices remained within the range $[0,5]$, consistent with the model and its comparison to the logistic equation, indicating that any species' population cannot exceed unity.

\section{Summary and Conclusion}
In this paper, we are the first to propose an allelopathic phytoplankton competition model influenced by the fear effect, where the parameters $k$ and $m$ denote the fear effect and the toxic release rate, respectively. Our study shows that $k$ and $m$ perturb the classical Lotka-Volterra competition model to cause rich dynamics. Meanwhile, $k$ and $m$ can significantly impact species density biologically.

First, we give the conditions for persistence for system \eqref{2}. When the persistence condition is satisfied, the two species will coexist. System \eqref{2} has three boundary equilibria. To study the positive equilibria, we construct a cubic function \eqref{16}. By analyzing the original image of this function as well as the image of the corresponding derivative function, we find that there are at most two positive equilibria of system \eqref{2} and give the existence conditions for the corresponding cases.

The next step is to analyze the stability of the equilibria.  We investigate the Jacobian matrix corresponding to the boundary equilibria $E_0$, $E_1$, and $E_2$, respectively. By analyzing the traces and the determinant of the matrix, it is found that $E_0$ is always a source. The fear effect $k$ and the interspecific competition rate $c$ which the toxic species is subjected will affect $E_1$ and $E_2$, respectively. Furthermore, when the toxin release rate $m$ reaches a certain threshold, either E1 or E2 will turn into a degraded equilibrium point.

For the positive equilibria, we have used \eqref{19} to study the relationship between the determinant of its Jacobian matrix as well as the slope of the tangent line and further obtain that $E_{1*}$ is a saddle point and $E_{2*}$ is a stable node. At the point $E_{3*}$, we obtained that its determinant equals 0, so we translated this point to the origin and performed Taylor's expansion. Finally, we used Theorem 7 in Chapter 2 to prove that $E_{3*}$ is a saddle-node. In particular, we prove no closed orbit for system \eqref{2}. Combined with the persistence condition, the locally stable positive equilibria $E_{2*}$ is also globally stable in system \eqref{2}.

In addition, by varying the fear effect $k$ or the interspecific competition rate $c$ to which toxic species is subjected, system \eqref{2} will experience transcritical bifurcation around $E_1$ or $E_2$. If the toxic release rate $m = 1-a-k$, the transcritical bifurcation experienced around $E_2$ will turn into a pitchfork bifurcation. When the toxic release rate $m$ is used directly as a bifurcation parameter, it results in a saddle-node bifurcation of system \eqref{2} in the first quadrant.

In essence, these results are seen in the spatially explicit case as well. For large fear coefficient extinction (for certain initial data) is seen for the non-toxic fearful species, see Theorem \ref{thm_CE1}. Depending on the interplay of other parameters, one sees a strong competition type setting, see Theorem \ref{thm_SComp}. Future work will explore a spatially heterogeneous toxic release rate $m$ (perhaps even one that causes degeneracy), as well as different forms of this rate, including the non-smooth case \cite{P21, K20}. We will also explore global stability of the interior equilibrium in the PDE case, as well as the existence of non-constant steady states.

To summarize all of the above analysis, the two species can coexist only if the fear effect $k$ is within the interval $(0,k^*)$. As for the toxic release rate $m$, it does not directly change the survival of the non-toxic species but only affects the species' density. We can conclude that in the allelopathic phytoplankton competition model, the real cause of the extinction of non-toxic species is the fear of toxic species compared to toxins. This article has some guidance for the conservation of species diversity.

\appendix{}
\begin{align*}
g_{11}=&\frac{\left(E^{2} m +1\right) Q_1 b}{Q_2^{2}}, \quad g_{02}=\frac{\left(2 E k +1\right) \left(-1+E \right)Q_3}{Q_2^{2} \left(E k +1\right)^{4} \left(E^{2} m +1\right)^{2}},\quad f_{20}=-\frac{Q_4}{Q_2^{2} \left(2 E k +1\right)^{2}}, \\
f_{11}=&-\frac{b E \left(E k +1\right)^{2} \left(E^{2} m +1\right) Q_5}{Q_2^{2} \left(2 E k +1\right)},\quad f_{02}=-\frac{Q_6}{Q_2^{2} \left(E k +1\right)^{2} \left(E^{2} m +1\right)},\\
Q_1=&4 E^{6} b k^{3} m -6 E^{5} b k^{3} m +7 E^{5} b k^{2} m -12 E^{4} bk^{2} m +4 E^{4} b k m +4 E^{4} k^{2} m -2 E^{3} bk^{3}-3 E^{3} bk^{2}-8 E^{3} b k m \\
&-4 E^{3} k^{2} m +E^{3} b m +4 E^{3} k^{2}+4 E^{3} k m -2 E^{2} bk^{2}-4 E^{2} b k -2 E^{2} b m -4 E^{2} k^{2}-4 E^{2} k m +4 E^{2} k \\
&+E^{2} m -b E -4 E k -E m +E -1,\\
Q_2=&E^{5} bk^{2} m +2 E^{4} b k m +E^{3} bk^{2}+E^{3} b m -2 E^{3} k m +2 E^{2} b k +2 E^{2} k m -2 E^{2} k -E^{2} m +b E \\
&+2 E k +E m -E +1,\\
Q_3=&3 E^{11} b^{2} k^{5} m^{3}+2 E^{10} b^{2} k^{5} m^{2}+12 E^{10} b^{2} k^{4} m^{3}+7 E^{9} b^{2} k^{5} m^{2}+9 E^{9} b^{2} k^{4} m^{2}+19 E^{9} b^{2} k^{3} m^{3}-4 E^{9} bk^{4} m^{3}\\
&+4 E^{8} b^{2} k^{5} m +27 E^{8} b^{2} k^{4} m^{2}+16 E^{8} b^{2} k^{3} m^{2}+15 E^{8} b^{2} k^{2} m^{3}-12 E^{8} b k^{4} m^{2}-12 E^{8} bk^{3} m^{3}+5 E^{7} b^{2} k^{5} m \\
&+18 E^{7} b^{2} k^{4} m +41 E^{7} b^{2} k^{3} m^{2}+14 E^{7} b^{2} k^{2} m^{2}+6 E^{7} b^{2} km^{3}-8 E^{7} bk^{4} m -36 E^{7} bk^{3} m^{2}-13 E^{7} bk^{2} m^{3}\\
&+8 E^{7} k^{3} m^{3}+2 E^{6} b^{2} k^{5}+18 E^{6} b^{2} k^{4} m +32 E^{6} b^{2} k^{3} m +31 E^{6} b^{2} k^{2} m^{2}-8 E^{6} b \,k^{4} m -8 E^{6} k^{3} m^{3}+E^{5} b^{2} k^{5}\\
&+6 E^{6} b^{2} k \,m^{2}+E^{6} b^{2} m^{3}-24 E^{6} b \,k^{3} m -39 E^{6} b \,k^{2} m^{2}-6 E^{6} b k \,m^{3}+24 E^{6} k^{3} m^{2}+12 E^{6} k^{2} m^{3}+9 E^{5} b^{2} k^{4}\\
&+25 E^{5} b^{2} k^{3} m +4 E^{5} b \,k^{4} m +28 E^{5} b^{2} k^{2} m +12 E^{5} b^{2} k \,m^{2}-8 E^{5} b \,k^{4}-24 E^{5} b \,k^{3} m -24 E^{5} k^{3} m^{2}\\
&-12 E^{5} k^{2} m^{3}+3 E^{4} b^{2} k^{4}+E^{5} b^{2} m^{2}-26 E^{5} b \,k^{2} m -18 E^{5} b k \,m^{2}-E^{5} b \,m^{3}+24 E^{5} k^{3} m +36 E^{5} k^{2} m^{2}\\
&+6 E^{5} k \,m^{3}+16 E^{4} b^{2} k^{3}+17 E^{4} b^{2} k^{2} m +4 E^{4} b \,k^{4}+12 E^{4} b \,k^{3} m +12 E^{4} b^{2} k m +2 E^{4} b^{2} m^{2}-24 E^{4} b \,k^{3}\\
&-26 E^{4} b \,k^{2} m -24 E^{4} k^{3} m -36 E^{4} k^{2} m^{2}-6 E^{4} k \,m^{3}+3 E^{3} b^{2} k^{3}-12 E^{4} b k m -3 E^{4} b \,m^{2}+8 E^{4} k^{3}\\
&+36 E^{4} k^{2} m +18 E^{4} k \,m^{2}+E^{4} m^{3}+14 E^{3} b^{2} k^{2}+6 E^{3} b^{2} k m +12 E^{3} b \,k^{3}+13 E^{3} b \,k^{2} m +2 E^{3} b^{2} m \\
&-26 E^{3} b \,k^{2}-12 E^{3} b k m -8 E^{3} k^{3}-36 E^{3} k^{2} m -18 E^{3} k \,m^{2}-E^{3} m^{3}+k^{2} b^{2} E^{2}-2 E^{3} b m +12 E^{3} k^{2}\\
&+18 E^{3} k m +3 E^{3} m^{2}+6 E^{2} b^{2} k +E^{2} b^{2} m +13 E^{2} b \,k^{2}+6 E^{2} b k m -12 E^{2} b k -2 E^{2} b m -12 E^{2} k^{2}\\
&-18 E^{2} k m -3 E^{2} m^{2}+6 E^{2} k +3 E^{2} m +E \,b^{2}+6 E b k +E b m -2 b E -6 E k -3 E m +E +b -1,\\
Q_4=&\left(E^{2} m +1\right)^{3} \left(E k +1\right)^{5} \left(-1+E \right) \left(3 E^{2} k^{2} m +3 E k m +k^{2}+m \right) E^{2} b^{2},\\
Q_5=&E^{9} b^{2} k^{4} m^{2}+4 E^{8} b^{2} k^{3} m^{2}+2 E^{7} b^{2} k^{4} m +6 E^{7} b^{2} k^{2} m^{2}+8 E^{6} b^{2} k^{3} m -2 E^{6} b \,k^{3} m^{2}+4 E^{6} b^{2} k \,m^{2}\\
&-4 E^{6} b \,k^{3} m -3 E^{6} b \,k^{2} m^{2}+E^{5} b^{2} k^{4}+12 E^{5} b^{2} k^{2} m +4 E^{5} b \,k^{3} m -2 E^{5} b \,k^{2} m^{2}+E^{5} b^{2} m^{2}-10 E^{5} b \,k^{2} m\\
& -4 E^{5} b k \,m^{2}+8 E^{5} k^{2} m^{2}+4 E^{4} b^{2} k^{3}-4 E^{4} b \,k^{3} m +8 E^{4} b^{2} k m -4 E^{4} b \,k^{3}+4 E^{4} b \,k^{2} m -12 E^{4} k^{2} m^{2}\\
&-8 E^{4} b k m -E^{4} b \,m^{2}+12 E^{4} k^{2} m +8 E^{4} k \,m^{2}+6 E^{3} b^{2} k^{2}+4 E^{3} b \,k^{3}-4 E^{3} b \,k^{2} m +4 E^{3} k^{2} m^{2}+2 E^{3} b^{2} m \\
&-10 E^{3} b \,k^{2}-16 E^{3} k^{2} m -12 E^{3} k \,m^{2}-2 E^{2} b \,k^{3}-2 E^{3} b m +4 E^{3} k^{2}+12 E^{3} k m +2 E^{3} m^{2}+4 E^{2} b^{2} k \\
&+7 E^{2} b \,k^{2}+4 E^{2} k^{2} m +4 E^{2} k \,m^{2}-8 E^{2} b k -4 E^{2} k^{2}-16 E^{2} k m -3 E^{2} m^{2}-2 E b \,k^{2}+4 E^{2} k +3 E^{2} m \\
&+b^{2} E +4 E b k +4 E k m +E \,m^{2}-2 E b -4 E k -4 E m +E +b +m -1,\\
Q6=&E^{14} b^{3} k^{6} m^{3}+6 E^{13} b^{3} k^{5} m^{3}+3 E^{12} b^{3} k^{6} m^{2}+15 E^{12} b^{3} k^{4} m^{3}-E^{12} b^{2} k^{5} m^{3}+18 E^{11} b^{3} k^{5} m^{2}+E^{11} b^{2} k^{5} m^{3}\\
&+20 E^{11} b^{3} k^{3} m^{3}-2 E^{11} b^{2} k^{5} m^{2}-6 E^{11} b^{2} k^{4} m^{3}+3 E^{10} b^{3} k^{6} m +45 E^{10} b^{3} k^{4} m^{2}+E^{10} b^{2} k^{5} m^{2}\\
&+6 E^{10} b^{2} k^{4} m^{3}+15 E^{10} b^{3} k^{2} m^{3}-9 E^{10} b^{2} k^{4} m^{2}-13 E^{10} b^{2} k^{3} m^{3}+18 E^{9} b^{3} k^{5} m +E^{9} b^{2} k^{5} m^{2}+60 E^{9} b^{3} k^{3} m^{2}\\
&-4 E^{9} b^{2} k^{5} m +13 E^{9} b^{2} k^{3} m^{3}-4 E^{9} b \,k^{4} m^{3}+E^{8} b^{3} k^{6}+6 E^{9} b^{3} k \,m^{3}-16 E^{9} b^{2} k^{3} m^{2}-13 E^{9} b^{2} k^{2} m^{3}\\
&-4 E^{9} b \,k^{4} m^{2}+45 E^{8} b^{3} k^{4} m +5 E^{8} b^{2} k^{5} m +9 E^{8} b^{2} k^{4} m^{2}+4 E^{8} b \,k^{4} m^{3}+45 E^{8} b^{3} k^{2} m^{2}-18 E^{8} b^{2} k^{4} m\\
& -7 E^{8} b^{2} k^{3} m^{2}+13 E^{8} b^{2} k^{2} m^{3}-12 E^{8} b \,k^{3} m^{3}+6 E^{7} b^{3} k^{5}-E^{7} b^{2} k^{5} m +E^{8} b^{3} m^{3}-14 E^{8} b^{2} k^{2} m^{2}\\
&-6 E^{8} b^{2} k \,m^{3}-4 E^{8} b \,k^{4} m -12 E^{8} b \,k^{3} m^{2}+8 E^{8} k^{3} m^{3}+60 E^{7} b^{3} k^{3} m -2 E^{7} b^{2} k^{5}+18 E^{7} b^{2} k^{4} m \\
&+23 E^{7} b^{2} k^{3} m^{2}+12 E^{7} b \,k^{3} m^{3}+18 E^{7} b^{3} k \,m^{2}-32 E^{7} b^{2} k^{3} m -11 E^{7} b^{2} k^{2} m^{2}+6 E^{7} b^{2} k \,m^{3}-13 E^{7} b \,k^{2} m^{3}\\
&-16 E^{7} k^{3} m^{3}+15 E^{6} b^{3} k^{4}+3 E^{6} b^{2} k^{5}+4 E^{6} b \,k^{4} m^{2}-6 E^{7} b^{2} k \,m^{2}-E^{7} b^{2} m^{3}-12 E^{7} b \,k^{3} m -13 E^{7} b \,k^{2} m^{2}\\
&+24 E^{7} k^{3} m^{2}+12 E^{7} k^{2} m^{3}+45 E^{6} b^{3} k^{2} m -9 E^{6} b^{2} k^{4}+25 E^{6} b^{2} k^{3} m +25 E^{6} b^{2} k^{2} m^{2}+13 E^{6} b \,k^{2} m^{3}\\
&+8 E^{6} k^{3} m^{3}-E^{5} b^{2} k^{5}+3 E^{6} b^{3} m^{2}-28 E^{6} b^{2} k^{2} m -6 E^{6} b^{2} k \,m^{2}+E^{6} b^{2} m^{3}-4 E^{6} b \,k^{4}-6 E^{6} b k \,m^{3}\\
&-48 E^{6} k^{3} m^{2}-24 E^{6} k^{2} m^{3}+20 E^{5} b^{3} k^{3}+12 E^{5} b^{2} k^{4}+7 E^{5} b^{2} k^{3} m +4 E^{5} b \,k^{4} m +12 E^{5} b \,k^{3} m^{2}-E^{6} b^{2} m^{2}\\
&-13 E^{6} b \,k^{2} m -6 E^{6} b k \,m^{2}+24 E^{6} k^{3} m +36 E^{6} k^{2} m^{2}+6 E^{6} k \,m^{3}+18 E^{5} b^{3} k m -16 E^{5} b^{2} k^{3}+17 E^{5} b^{2} k^{2} m \\
&+12 E^{5} b^{2} k \,m^{2}+4 E^{5} b \,k^{4}+6 E^{5} b k \,m^{3}+24 E^{5} k^{3} m^{2}+12 E^{5} k^{2} m^{3}-3 E^{4} b^{2} k^{4}-12 E^{5} b^{2} k m -E^{5} b^{2} m^{2}\\
&-12 E^{5} b \,k^{3}-E^{5} b \,m^{3}-48 E^{5} k^{3} m -72 E^{5} k^{2} m^{2}-12 E^{5} k \,m^{3}+15 E^{4} b^{3} k^{2}+19 E^{4} b^{2} k^{3}+11 E^{4} b^{2} k^{2} m \\
&+12 E^{4} b \,k^{3} m +13 E^{4} b \,k^{2} m^{2}-6 E^{5} b k m -E^{5} b \,m^{2}+8 E^{5} k^{3}+36 E^{5} k^{2} m +18 E^{5} k \,m^{2}+E^{5} m^{3}+3 E^{4} b^{3} m\\
& -14 E^{4} b^{2} k^{2}+6 E^{4} b^{2} k m +2 E^{4} b^{2} m^{2}+12 E^{4} b \,k^{3}+E^{4} b \,m^{3}+24 E^{4} k^{3} m +36 E^{4} k^{2} m^{2}+6 E^{4} k \,m^{3}\\
&-3 E^{3} b^{2} k^{3}-2 E^{4} b^{2} m -13 E^{4} b \,k^{2}-16 E^{4} k^{3}-72 E^{4} k^{2} m -36 E^{4} k \,m^{2}-2 E^{4} m^{3}+6 E^{3} b^{3} k +15 E^{3} b^{2} k^{2}\\
&+6 E^{3} b^{2} k m +13 E^{3} b \,k^{2} m +6 E^{3} b k \,m^{2}-E^{4} b m +12 E^{4} k^{2}+18 E^{4} k m +3 E^{4} m^{2}-6 E^{3} b^{2} k +E^{3} b^{2} m \\
&+13 E^{3} b \,k^{2}+8 E^{3} k^{3}+36 E^{3} k^{2} m +18 E^{3} k \,m^{2}+E^{3} m^{3}-E^{2} b^{2} k^{2}-6 E^{3} b k -24 E^{3} k^{2}-36 E^{3} k m -6 E^{3} m^{2}\\
&+E^{2} b^{3}+6 E^{2} b^{2} k +E^{2} b^{2} m +6 E^{2} b k m +E^{2} b \,m^{2}+6 E^{3} k +3 E^{3} m -E^{2} b^{2}+6 E^{2} b k +12 E^{2} k^{2}+18 E^{2} k m \\
&+3 E^{2} m^{2}-E^{2} b -12 E^{2} k -6 E^{2} m +E \,b^{2}+E b m +E^{2}+E b +6 E k +3 E m -2 E +1.
\end{align*}

%\end{multicols}

\begin{thebibliography}{9}

\bibitem[Antwi-Fordjour(2020)]{K20} Antwi-Fordjour, K., Parshad, R. D., \& Beauregard, M. A. (2020). Dynamics of a predator–prey model with generalized Holling type functional response and mutual interference. Mathematical biosciences, 326, 108407.

\bibitem[Brown {\it et al.}(1999)]{Brown99}
Brown, J. S., Laundré, J. W., \& Gurung, M. [1999]. ``The ecology of fear: optimal foraging, game theory, and trophic interactions.'' {\it Journal of mammalogy.} {\bf 80(2)}, 385-399.

\bibitem[Chen, 2005]{Chen05}
Chen, F. [2005] ``On a nonlinear nonautonomous predator–prey model with diffusion and distributed delay,'' {\it Journal of Computational and Applied Mathematics.} {\bf 180}, 33--49.

\bibitem[Chesson \& Kuang, 2008]{Chesson08}
Chesson, P. \& Kuang, J. J. [2008] ``The interaction between predation and competition,'' {\it Nature.} {\bf 456}, 235--238.

\bibitem[Chen {\it et al.}(2013)]{Chen13}
Chen, F.,  Gong, X. \& Chen, W. [2013] ``Extinction in two dimensional discrete Lotka-Volterra competitive system with the effect of toxic substances (II),'' {\it Dynamics of Continuous, Discrete and Impulsive Systems Series B$:$ Applications and Algorithms.} {\bf 20}, 449--461.

\bibitem[Chen {\it et al.}(2016)]{Chen16}
Chen, F.,  Chen, X. \& Huang, S. [2016] ``Extinction of a two species non-autonomous competitive system with Beddington-DeAngelis functional response and the effect of toxic substances,'' {\it Open Mathematics.} {\bf 14}, 1157--1173.

\bibitem[Chen {\it et al.}(2023)]{Chen23}
Chen, S.,  Chen, F., Li, Z. \& Chen, L. [2023] ``Bifurcation analysis of an allelopathic phytoplankton model,'' {\it Journal of Biological Systems.} In Press.

\bibitem[Chen {\it et al.}(2023)]{Chen2023}
Chen, S., Chen, F., Srivastava, V., \& Parshad, R. D. [2023]. ``Dynamical Analysis of a Lotka-Volterra Competition Model with both Allee and Fear Effect." {\it International Journal of Biomathematics}. In Press.

\bibitem[Du (2002A)]{Du02a}
Du, Y. [2002]. ``Effects of a degeneracy in the competition model: Part I. Classical and generalized steady-state solutions.'' {\it Journal of Differential Equations}, {\bf 181(1)}, 92--132.

\bibitem[Du (2002B)]{Du02b}
Du, Y. [2002]. Effects of a degeneracy in the competition model: Part II. Perturbation and dynamical behaviour. {\it Journal of Differential Equations}, {\bf 181(1)}, 133--164.

\bibitem[Gilbarg \& Trudinger (1977)]{Gil77}
Gilbarg, D., Trudinger, N. S., Gilbarg, D., \& Trudinger, N. S. [1977]. Elliptic partial differential equations of second order (Vol. 224, No. 2). Berlin: springer.

\bibitem[Henry (2006)]{henry}
Henry, D. [2006]. {\it Geometric theory of semilinear parabolic equations} (Vol. 840). (Springer).

\bibitem[Kishimoto \& Weinberger (1985)]{kish88}
Kishimoto, K., \& Weinberger, H. F. [1985]. ``The spatial homogeneity of stable equilibria of some reaction-diffusion systems on convex domains.'' {\it Journal of Differential Equations.} {\bf 58(1)}, 15--21.


\bibitem[Legrand {\it et al.}(2003)]{Legrand03}
Legrand, C.,  Rengefors, K., Fistarol, G. O. \& Graneli, E. [2003] ``Allelopathy in phytoplankton-biochemical, ecological and evolutionary aspects,'' {\it Phycologia.} {\bf 42}, 406--419.

\bibitem[Long \& Wolfe, 2019]{Long19}
Long, L. L. \& Wolfe, J. D. [2019] ``Review of the effects of barred owls on spotted owls,'' {\it The Journal of Wildlife Management.} {\bf 83}, 1281--1296.

\bibitem[Lai {\it et al.}(2020)]{Lai20}
Lai, L., Zhu, Z. \& Chen, F. [2020] ``Stability and bifurcation in a predator–prey model with the additive Allee effect and the fear effect,'' {\it Mathematics.} {\bf 8}, Article ID 1280.

\bibitem[Liu {\it et al.}(2022)]{Liu22}
Liu, T., Chen, L., Chen, F. \& Li, Z. [2022] ``Stability analysis of a Leslie–Gower model with strong Allee effect on prey and fear effect on predator,'' {\it International Journal of Bifurcation and Chaos.} {\bf 32}, Article ID 2250082.

\bibitem[Maynard-Smith(1974)]{Maynard-Smith74} Maynard-Smith, J. [1974] {\it Models in Ecology}, (Cambridge University Press, UK).

\bibitem[Mulderij {\it et al.}(2006)]{Mulderij06}
Mulderij, G.,  Smolders, A. J. \& Van Donk, E. L. L. E. N. [2006] `` Allelopathic effect of the aquatic macrophyte, Stratiotes aloides, on natural phytoplankton,'' {\it Freshwater Biology.} {\bf 51}, 554--561.

\bibitem[Ma {\it et al.}(2015)]{Ma15}
 Ma, Z.,  Zhou, Y. \& Li, C. [2015] {\it Qualitative and Stability Methods of Ordinary Differential Equations}, (Science Press, CN).

\bibitem[Morgan (1989)]{Morgan89}
J.Morgan [1989] ``Global existence for semilinear parabolic systems,'' {\it SIAM journal on mathematical analysis.} {\bf 20(5)}, 1128--1144

\bibitem[Parshad (2021)]{P21} Parshad, R. D., Antwi-Fordjour, K., \& Takyi, E. M. (2021). Some novel results in two species competition. SIAM Journal on Applied Mathematics, 81(5), 1847-1869.



\bibitem[Pierre (2010)]{P10}
M. Pierre [2010] ``Global existence in reaction-diffusion systems with control of mass: a survey,'' {\it Milan Journal of Mathematics.} {\bf 78}, 417--455.

\bibitem[Peckarsky {\it et al.}(2008)]{Peckarsky08}
Peckarsky, B. L., Abrams, P. A., Bolnick, D. I., Dill, L. M., Grabowski, J. H., Luttbeg, B., \dots \& Trussell, G. C. [2008] ``Revisiting the classics: considering nonconsumptive effects in textbook examples of predator–prey interactions,'' {\it Ecology.} {\bf 89}, 2416--2425.

\bibitem[Polis {\it et al.}(1989)]{Polis89}
Polis, G. A., Myers, C. A. \& Holt, R. D. [1989] ``The ecology and evolution of intraguild predation: potential competitors that eat each other,'' {\it Annual Review of Ecology and Systematics.} {\bf 20}, 297--330.

\bibitem[Perko(2013)]{Perko13} 
Perko, L. [2013] {\it Differential Equations and Dynamical Systems}, (Springer Science \& Business Media, USA).

\bibitem[Pradhan \& Ki, 2022]{Pradhan22}
Pradhan, B. \& Ki, J. S. [2022] ``Phytoplankton toxins and their potential therapeutic applications$:$ A journey toward the quest for potent pharmaceuticals,'' {\it Marine Drugs.} {\bf 20}, Article ID 271.

\bibitem[Sasmal \& Takeuchi(2020)]{Samsal20}
Sasmal, S. K., \& Takeuchi, Y. [2020]. ``Dynamics of a predator-prey system with fear and group defense.'' {\it Journal of Mathematical Analysis and Applications.} {\bf 481(1)}, 123471.



\bibitem[Srivastava {\it et al.}(2023)]{Srivastava23}
Srivastava, V., Takyi, E. M. \& Parshad, R. D. [2023] ``The effect of `` fear'' on two species competition,'' {\it Mathematical Biosciences and Engineering.} {\bf 20}, 8814--8855.

\bibitem[Van Lanen {\it et al.}(2011)]{Van Lanen11}
Van Lanen, N. J., Franklin, A. B., Huyvaert, K. P., Reiser II, R. F. \& Carlson, P. C. [2011] ``Who hits and hoots at whom$?$ Potential for interference competition between barred and northern spotted owls,'' {\it Biological Conservation.} {\bf 144}, 2194--2201.

\bibitem[Winder \& Sommer, 2012]{Winder12}
Winder, M. \& Sommer, U. [2012] ``Phytoplankton response to a changing climate,'' {\it Hydrobiologia.} {\bf 698}, 5--16.

\bibitem[Wiens{\it et al.}(2014)]{Wiens14}
Wiens, J. D., Anthony, R. G. \& Forsman, E. D. [2014] ``Competitive interactions and resource partitioning between northern spotted owls and barred owls in western Oregon,'' {\it Wildlife Monographs.} {\bf 185}, 1--50.

\bibitem[Wang{\it et al.}(2016)]{Wang16}
Wang, X., Zanette, L. \& Zou, X. [2016] ``Modelling the fear effect in predator–prey interactions,'' {\it Journal of Mathematical Biology.} {\bf 73}, 1179--1204.

\bibitem[Zhang{\it et al.}(1992)]{Zhang92}
Zhang, Z., Ding, T., Huang, W. \& Dong, Z. [1992] {\it Qualitative Theory of Differential Equation}, (Science Press, CN).


\bibitem[Zhang {\it et al.}(2019)]{Zhang19}
Zhang, H., Cai, Y., Fu, S., \& Wang, W. [2019]. ``Impact of the fear effect in a prey-predator model incorporating a prey refuge.'' {\it Applied Mathematics and Computation.} { \bf 356}, 328--337.








\end{thebibliography}
\end{document}